\DeclareMathOperator{\In}{In}
\DeclareMathOperator{\Bun}{Bun}
\DeclareMathOperator{\Aut}{Aut}
\DeclareMathOperator{\uAut}{\underline{Aut}}
\newcommand{\Isom}{\underline{\operatorname{Isom}}}
\DeclareMathOperator{\Dist}{HaarDist}
\DeclareMathOperator{\HDist}{HaarDist}
\DeclareMathOperator{\GL}{GL}
\DeclareMathOperator{\id}{id}
\DeclareMathOperator{\Spd}{Spd}
\DeclareMathOperator{\Spa}{Spa}
\DeclareMathOperator{\Spec}{Spec}
\DeclareMathOperator{\Hecke}{Hecke}
\newcommand{\LHecke}{\Hecke^{\mathrm{loc}}}
\DeclareMathOperator{\Haar}{Haar}
\DeclareMathOperator{\Rel}{Rel}
\DeclareMathOperator{\tr}{tr}
\DeclareMathOperator{\rk}{rk}
\DeclareMathOperator{\inv}{inv}
\DeclareMathOperator{\cc}{cc}
\DeclareMathOperator{\rank}{rank}
\DeclareMathOperator{\Fix}{Fix}
\DeclareMathOperator{\Groth}{Groth}
\DeclareMathOperator{\Hom}{Hom}
\DeclareMathOperator{\Div}{Div}
\DeclareMathOperator{\Gal}{Gal}
\DeclareMathOperator{\Frob}{Frob}
\DeclareMathOperator{\Lie}{Lie}
\DeclareMathOperator{\Ad}{Ad}
\DeclareMathOperator{\coev}{coev}
\DeclareMathOperator{\ev}{ev}
\DeclareMathOperator{\Coh}{Coh}
\DeclareMathOperator{\Rep}{Rep}
\newcommand{\isom}{\xrightarrow{\sim}}
\newcommand{\BC}{\mathcal{BC}}
\DeclareMathOperator{\RHom}{RHom}
\newcommand{\uRHom}{\underline{\RHom}}
\newcommand{\EE}{\mathcal{E}}
\newcommand{\Qp}{\mathbb{Q}_p}
\newcommand{\Qlb}{\overline{\mathbb{Q}}_{\ell}}
\newcommand{\et}{\mathrm{\acute{e}t}}
\newcommand{\Fpb}{\overline{\mathbb{F}}_p}
\newcommand{\OO}{\mathcal{O}}
\newcommand{\ZZ}{\mathbb{Z}}
\newcommand{\Zl}{\ZZ_{\ell}}
\newcommand{\rs}{\mathrm{rs}}
\newcommand{\sr}{\mathrm{sr}}
\newcommand{\an}{\mathrm{an}}
\newcommand{\loc}{\mathrm{loc}}
\newcommand{\Sht}{\mathrm{Sht}}
\newcommand{\lis}{\mathrm{lis}}
\newcommand{\trdist}{\mathrm{tr.\ dist}}
\newcommand{\Set}{\mathrm{Set}}
\newcommand{\alg}{\mathrm{alg}}
\newcommand{\ol}{\overline}
\newcommand{\Gbt}{\widetilde{G}_b}
\newcommand{\Gbu}{\underline{G_b(F)}}
\newcommand{\Gbpu}{\underline{G_{b'}(F)}}
\newcommand{\Perf}{\mathrm{Perf}}
\newcommand{\Perfb}{\Perf_{\Fpb}}
\newcommand{\Fp}{\mathbb{F}_p}
\newcommand{\Gm}{\mathbb{G}_m}
\newcommand{\pr}{p}
\newcommand{\reg}{\mathrm{reg}}
\newcommand{\VV}{\mathcal{V}}
\newcommand{\abs}[1]{\left\lvert #1 \right|}
\numberwithin{equation}{subsection}
\newtheorem{thm}[equation]{Theorem}
\newtheorem{lem}[equation]{Lemma}
\newtheorem{prop}[equation]{Proposition}
\newtheorem{cor}[equation]{Corollary}
\theoremstyle{definition}
\newtheorem{defn}[equation]{Definition}
\newtheorem{defprop}[equation]{Definition/Proposition}
\theoremstyle{remark}
\newtheorem{ex}[equation]{Example}
\title{On the strongly regular locus of the inertia stack of $\Bun_G$}
\author{Daniel R. Gulotta}
\address{Department of Mathematics, University of Utah, Salt Lake City, UT 84112, USA}
\email{dgulotta@alum.mit.edu}
\begin{document}
\begin{abstract}
Let $G$ be a connected reductive group over a finite extension of $\mathbb{Q}_p$.  We show that
for each $b \in B(G)$, the strongly regular locus of the inertia stack of $\operatorname{Bun}_G^b$ is open
in the inertia stack of $\operatorname{Bun}_G$.  As a consequence, we extend the computation of Hansen--Kaletha--Weinstein
of trace distributions of the cohomology of local shtuka spaces $\mathrm{Sht}_{G,b,\mu}$ to non-basic $b$.
If $b$ is closed in $B(G,\mu)$, or $b$ is basic and has only one specialization in $B(G,\mu)$,
then we compute the trace distribution of the entire strongly regular locus.
In the process, we prove some results on the behavior of characteristic
classes under cohomologically smooth pullback.
\end{abstract}
\maketitle
\tableofcontents
\section{Introduction}
Let $F$ be a finite extension of $\Qp$, and let $G$ be a connected
reductive group over $F$.
Scholze \cite{berkeley} has defined a tower of moduli spaces of mixed characteristic local shtukas
\[ \Sht_{G,b,\mu} = \varprojlim_K \Sht_{G,b,\mu,K} \,. \]
Here, $\mu$ is a conjugacy class of cocharacters $\mu \colon \mathbb{G}_m \to G$ defined over $\overline{F}$,
$b$ is an element of the Kottwitz set $B(G,\mu)$,
and $K$ ranges over open compact subgroups of $G(F)$.
Let $E$ be the field of definition of the conjugacy class of $\mu$. Then $\Sht_{G,b,\mu,K}$ is a locally
spatial diamond over $\operatorname{Spd} \breve{E}$, where $\breve{E}$ is the completion of the maximal
unramified extension of $E$.

The tower $\Sht_{G,b,\mu,K}$ has an action of $G(F) \times G_b(F)$, where $G_b(F)$ is the automorphism
group of the isocrystal associated with $b$.

The cohomology of $\Sht_{G,b,\mu}$ is of interest in the local Langlands program.
Let $\ell$ be a prime different from $p$.  The geometric Satake equivalence attaches to each $\mu$
an object $\mathcal{S}_{\mu}$ in the equivariant derived category of \'etale $\Zl$-sheaves on $\Sht_{G,b,\mu,K}$.
Let $C$ be the completion of the algebraic closure of $F$.
For any smooth representation $\rho$ of $G_b(F)$ with coefficients in $\Qlb$, define
\[ R \Gamma(G,b,\mu)[\rho] = \varinjlim_K R\Hom_{G_b(F)}(R\Gamma_c(\Sht_{G,b,\mu,K,C},\mathcal{S}_{\mu}),\rho) \,. \]
Fargues and Scholze \cite[Corollary I.7.3]{fargues-scholze} have shown that if
$\rho$ is a finite length admissible representation of $G_b(F)$, then
$R \Gamma(G,b,\mu)[\rho]$ is represented by a complex of $G(F) \times W_E$-representations that are
finite length and admissible as representations of $G(F)$.

One way to study $R \Gamma(G,b,\mu)[\rho]$ explicitly is via trace distributions.
Recall that for any admissible smooth representation $\pi$ of $G(F)$ and any
compactly supported smooth function $f \colon G(F) \to \Qlb$, the endomorphism of $\pi$ given by
$\int_{G(F)} f(g) \pi(g) \, dg$ has finite-dimensional image,
so it makes sense to define its trace
$\tr(f|\pi)$.
If $\pi$ also has finite length, then it has a Harish-Chandra character
$\Theta_{\pi}$,
which
is a smooth function from the regular semisimple locus $G(F)_{\rs}$ to $\Qlb$
such that
\[ \tr(f|\pi) = \int_{G(F)_{\rs}} f(g) \Theta_{\pi}(g) \, dg \text{ for all $f$.} \]
Similarly, a finite length admissible smooth representation $\rho$ of
$G_b(F)$ has a Harish-Chandra character $\Theta_{\rho}$.
If an object in the derived category $D(G(F),\Qlb)$
has finite length admissible cohomology groups, then its
Harish-Chandra character is defined
to be the alternating sum of the Harish-Chandra
characters of the cohomology groups.

For any finite length admissible smooth representation $\rho$ of $G_b(F)$, 
Hansen--Kaletha--Weinstein compare the elliptic parts of the Harish-Chandra characters of $\rho$ and $R\Gamma(G,b,\mu)[\rho]$.
\begin{thm}[{\cite[Theorem 6.5.2]{hkw}}] \label{hkw-649}
Assume $b$ is basic.  Let $\rho$ be a finite length admissible smooth representation of $G_b(F)$, and let
$\pi = R\Gamma(G,b,\mu)[\rho]$.

Let $g \in G(F)$ be an elliptic element.  Then
\[ \Theta_{\pi}(g) = (-1)^{\left<\mu,2\rho_G\right>} \sum_{(g,g',\lambda) \in \Rel_b} \dim r_{\mu}[\lambda] \Theta_{\rho}(g') \,. \]
\end{thm}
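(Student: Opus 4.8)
The plan is to compute $\Theta_\pi$ on the elliptic locus by realizing $R\Gamma(G,b,\mu)[\rho]$ through the cohomology of the shtuka tower and applying the Lefschetz--Verdier trace formula for small $v$-stacks to the action of a pair $(g,g')\in G(F)\times G_b(F)$. Recall that for basic $b$ the group $G_b$ is an inner form of $G$, so that transfers of an elliptic $g\in G(F)_{\rs}$ to $G_b(F)$ are defined. First I would observe that for a test function $f$ on $G(F)$ bi-invariant under a level $K$, the alternating trace $\tr(f|R\Gamma(G,b,\mu)[\rho])$ is a finite sum $\sum_g f(g)\,\ell(g)$, so it suffices to identify the local contribution $\ell(g)$ for a single elliptic $g$. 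Since $\Theta_\pi(g)$ does not see the Weil group, $\ell(g)$ is computed by the Lefschetz--Verdier local terms of the ULA sheaf $\mathcal{S}_\mu$ on the $(g,g')$-fixed-point locus $\Fix(g,g')\subseteq\Sht_{G,b,\mu,K,C}$, summed over transfers $g'$ of $g$ and evaluated against $R\Hom_{G_b(F)}(-,\rho)$.

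The key geometric input is that for $g$ elliptic the locus $\Fix(g,g')$ is proper --- indeed discrete at finite level --- so that the local terms are defined and additive. I would prove this by identifying $\Fix(g,g')$ with a disjoint union, over cocharacters $\lambda$ in the conjugacy class of $\mu$ that factor through the maximal torus $T=Z_G(g)$ and satisfy the compatibility with $b$ and with the transfer $g\mapsto g'$ recorded in $\Rel_b$, of affine Deligne--Lusztig spaces for the torus $T$; ellipticity means exactly that $T/Z(G)$ is anisotropic, which forces each such space to be proper of the expected dimension. This identification also matches the set of components of $\Fix(g,g')$, ranging over all transfers $g'$, with the set of triples $(g,g',\lambda)\in\Rel_b$.

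Next I would compute the local term of $\mathcal{S}_\mu$ at each component using geometric Satake: the stalk of $\mathcal{S}_\mu$ at the $T$-fixed point indexed by $\lambda$ realizes, up to shift and Tate twist, the $\lambda$-weight space $r_\mu[\lambda]$ of the highest-weight-$\mu$ representation, so the local term equals $(-1)^{\ang{\mu,2\rho_G}}\dim r_\mu[\lambda]$ --- the sign coming from the cohomological shift $[\ang{\mu,2\rho_G}]$, with $\ang{\mu,2\rho_G}=\dim\mathrm{Gr}_{\leq\mu}$, built into the normalization of $\mathcal{S}_\mu$ together with the parity of the relevant Schubert cell. Summing over the components of $\Fix(g,g')$, and using that $R\Hom_{G_b(F)}(-,\rho)$ turns the $g'$-factor of each contribution into $\Theta_\rho(g')$, gives $\Theta_\pi(g)=(-1)^{\ang{\mu,2\rho_G}}\sum_{(g,g',\lambda)\in\Rel_b}\dim r_\mu[\lambda]\,\Theta_\rho(g')$. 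One may equivalently repackage the argument as a statement about the characteristic class $\cc(R\Gamma(G,b,\mu)[\rho])$ on the inertia stack $\In(\Bun_G)$ and the effect of the Hecke correspondence on it --- the viewpoint developed in the rest of this paper.

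I expect the main obstacle to be two intertwined points. First, one must make the Lefschetz--Verdier formalism genuinely apply to the non-proper $\breve{E}$-analytic space $\Sht_{G,b,\mu,K}$: this requires the ULA property of $\mathcal{S}_\mu$ together with the properness of $\Fix(g,g')$ above, so that the trace formula carries no boundary contributions. Second, one must pin down all normalization constants so that the geometric Satake local term is exactly the asserted weight multiplicity with the asserted sign. Both steps rest essentially on ellipticity of $g$; for non-elliptic $g$ the fixed-point locus fails to be proper, the trace formula picks up boundary terms, and the clean identity breaks down --- precisely the difficulty that the present paper addresses by working on the strongly regular locus of the inertia stack.
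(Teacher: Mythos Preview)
Your proposal captures the right ingredients --- the fixed-point description indexed by $\Rel_b$ and the local-term computation via geometric Satake --- but it is organized differently from the argument in \cite{hkw} (which this paper cites for the statement and whose method Section~\ref{trace section} then extends). You work directly on the non-proper diamond $\Sht_{G,b,\mu,K,C}$ and invoke a Lefschetz--Verdier trace formula for the $(g,g')$-action. The route in \cite{hkw} and in Section~\ref{trace section} instead starts from $R\Gamma(G,b,\mu)[\rho]\cong i_1^* T_{-\mu} i_{b*}\rho$ (Proposition~\ref{mant formula}) and tracks characteristic classes on $\Bun_{G,C}$ through the sequence $i_{b*}$, $h_1^*(-)\otimes\mathcal{S}_\mu$, $h_{2,\le\mu*}$, $i_1^*$. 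The point of this detour is that $h_{2,\le\mu}$ is proper and $i_1$ is an open immersion, so Propositions~\ref{cc pull} and~\ref{cc push} apply unconditionally; no trace formula on a non-proper space is ever invoked. The role of ellipticity in \cite{hkw} is not primarily to make $\Fix(g,g')$ proper inside $\Sht$, but to make the strata $\In(\Bun_G^{b'})_{\el}$ open and closed in $\In(\Bun_G)_{\el}$, so that the inclusions $\In(\Hecke^{b,1})_{\el}\hookrightarrow\In(\Hecke^{b,*})_{\el}$ and $\In(\Hecke^{b,1})_{\el}\hookrightarrow\In(\Hecke^{*,1})_{\el}$ are clopen and the fixed-point count goes through (the elliptic analogue of Theorem~\ref{sr open} and the lemma in Section~\ref{fixed point section}). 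Your local-term computation corresponds to \cite[Proposition~6.4.8]{hkw}; your fixed-point parametrization corresponds to \cite[Theorem~6.2.3, Corollary~6.2.4]{hkw}, carried out on the inertia of $\Hecke^{b,1}$ rather than on $\Sht$. You acknowledge at the end that one may ``repackage'' the argument on $\In(\Bun_G)$; in \cite{hkw} and here that is the actual proof, not a reformulation.

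The step in your direct approach that is not justified is exactly the one you flag: ULA-ness of $\mathcal{S}_\mu$ and properness of $\Fix(g,g')$ make the local terms well-defined, but do not by themselves imply that the trace on $R\Gamma_c$ of a non-proper space equals the sum of those local terms. Absent a compactification or a separate vanishing argument for boundary contributions, this is a genuine gap, and it is precisely what the passage to $\Bun_G$ is designed to avoid.
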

Here, $\rho_G$ is the sum of the positive roots of $G$,
$\Rel_b$ is a set parametrizing pairs of stably conjugate elements
of $G(F)$ and $G_b(F)$, plus some additional data,
and $r_{\mu}[\lambda]$ is the $\lambda$-isotypic subspace of
the representation of the Langlands dual group
$\widehat{G}$ with highest
weight $\mu$.
If $\rho$ belongs to a supercuspidal $L$-packet, then this identity confirms a prediction of the Kottwitz and refined local Langlands conjectures.

Recall that an element of a reductive group is \emph{strongly regular} if its centralizer is a torus.
It would be useful to generalize Theorem \ref{hkw-649} to strongly regular $g$, since the strongly regular
locus is dense in $G(F)$, while the elliptic locus may not be dense.
We make some progress toward this goal.
\begin{thm}[Theorem \ref{trace}] \label{trace intro}
Let $\rho$ be a finite length admissible representation of $G_b(F)$, and let $\pi = R\Gamma(G,b,\mu)[\rho]$.
Let $g \in G(F)$ be a strongly regular element.
Suppose that for all specializations $b'$ of $b$ in $B(G,\mu)$, $g$ is not stably conjugate to any
element of $G_{b'}(F)$.  Then
\[ \Theta_{\pi}(g) = (-1)^{\left<\mu,2\rho_G\right>} \sum_{(g,g',\lambda) \in \Rel_b} \dim r_{\mu}[\lambda] \Theta_{\rho}(g') \prod_{\alpha \in \Phi^+} |1-\alpha(\lambda)|^{-1} \,. \]
Here, $\Phi^+$ is the set of roots of $G$ that are positive with respect to the parabolic subgroup of $G$ associated with $b$,
and the norm is defined so that $\abs{p}=p^{-[F:\Qp]}$.
\end{thm}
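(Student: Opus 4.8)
The plan is to follow the strategy of Hansen--Kaletha--Weinstein, using the openness of the strongly regular locus of the inertia stack to reduce, on that locus, to the contribution of the single stratum $\Bun_{G}^{b}$, and to extract the Jacobian factor $\prod_{\alpha\in\Phi^{+}}|1-\alpha(\lambda)|^{-1}$ from the local terms of a Lefschetz--Verdier fixed point formula in the range where $g$ is no longer assumed elliptic. First recall that, in the relevant normalization, $\pi = R\Gamma(G,b,\mu)[\rho]$ is realized (Fargues--Scholze) as $i_{1}^{*}(T_{\mu}(i_{b!}\rho))$, up to a cohomological shift by $\langle\mu,2\rho_{G}\rangle$ and a Tate twist, where $i_{b}\colon\Bun_{G}^{b}\hookrightarrow\Bun_{G}$ and $i_{1}\colon\Bun_{G}^{1}\hookrightarrow\Bun_{G}$ are the stratum inclusions, $T_{\mu}$ is the Hecke operator with kernel $\mathcal{S}_{\mu}$, and the perverse normalization of $\mathcal{S}_{\mu}$ produces the sign $(-1)^{\langle\mu,2\rho_{G}\rangle}$. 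On the strongly regular locus $\Theta_{\pi}$ is, up to the usual normalization, the invariant function representing the characteristic class $\cc(\pi)$, a section over $\In_{S}(\Bun_{G}^{1})^{\sr}\simeq[\,\underline{G(F)}^{\sr}/\underline{G(F)}\,]$. Since $i_{1}$ is an open immersion, hence cohomologically smooth, the results on characteristic classes under cohomologically smooth pullback give $\cc(\pi)=\In_{S}(i_{1})^{*}\cc(T_{\mu}(i_{b!}\rho))$, so $\Theta_{\pi}(g)$ is read off from $\cc(T_{\mu}(i_{b!}\rho))$ along $\In_{S}(\Bun_{G}^{1})^{\sr}$ at the class of $g$.

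Next I would use the openness theorem. On the strongly regular locus the locally closed stratification $\In_{S}(\Bun_{G})=\bigsqcup_{b'}\In_{S}(\Bun_{G}^{b'})$ restricts to a decomposition into open-and-closed pieces, so the strata no longer interact and each makes an independent, genuinely local contribution to $\cc(T_{\mu}(i_{b!}\rho))^{\sr}$; this, together with the strong regularity of $g$, is also what guarantees that the Lefschetz--Verdier local terms below are honestly local. Unwinding $\cc(T_{\mu}(i_{b!}\rho))$ by the compatibilities of $\cc$ with pushforward and with cohomologically smooth pullback along the two legs of $\Hecke_{\le\mu}$, one obtains a fixed point formula: a fixed point contributing at $(\mathcal{E}_{1},g)$, with $\mathcal{E}_{1}$ the trivial $G$-bundle, is a modification of type $\le\mu$ relating $\mathcal{E}_{1}$ to some $\mathcal{E}'\in\Bun_{G}^{b'}$ together with an automorphism $\gamma'$ of $\mathcal{E}'$ compatible with $g$ under the modification; because $g$ is strongly regular semisimple, $\gamma'$ is forced to be semisimple with the same stable class, hence stably conjugate to $g$ inside $G_{b'}(F)$. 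The strata $b'$ that can occur are $b$, on which $i_{b!}\rho$ restricts to $\rho$, and the boundary strata of $\Bun_{G}^{b}$, i.e.\ the specializations of $b$; of these, only the ones lying in $B(G,\mu)$ are reached by a $\le\mu$ modification to the trivial bundle. The hypothesis that $g$ is stably conjugate to no element of $G_{b'}(F)$ for any such specialization $b'$ therefore annihilates every boundary contribution, leaving only the stratum $b$.

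It remains to compute the stratum-$b$ contribution. Here I would run the local computation on $\Bun_{G}^{b}$ as in Hansen--Kaletha--Weinstein: the fixed modifications of type $\le\mu$ from $\Bun_{G}^{b}$ to the trivial bundle, together with the weight-multiplicity data carried by the relative characteristic class of the Satake sheaf $\mathcal{S}_{\mu}$ along the Hecke correspondence --- which by geometric Satake records $\dim r_{\mu}[\lambda]$ --- are parametrized by $\Rel_{b}$, and the associated value of $\rho$ is $\Theta_{\rho}(g')$. The new feature is that, because $g$ is only strongly regular and not necessarily elliptic, and because the automorphism group of a point of $\Bun_{G}^{b}$ carries a nontrivial pro-unipotent part on whose Lie algebra the maximal torus containing $g$ acts through the roots $\alpha\in\Phi^{+}$ (those lying in the unipotent radical $N_{b}$ of the parabolic $P_{b}$ attached to $b$), the fixed locus is no longer transverse: in the corresponding normal directions $g$ acts, through the torus element $\lambda$ determined by the fixed-point datum, with eigenvalue $\alpha(\lambda)$, and a Lefschetz-trace / geometric-series computation --- the geometric counterpart of the orbital-integral Jacobian attached to $\Lie N_{b}$ --- contributes the factor $\prod_{\alpha\in\Phi^{+}}|1-\alpha(\lambda)|^{-1}$. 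When $b$ is basic and $g$ is elliptic one has $\Phi^{+}=\varnothing$, this factor is $1$, and one recovers Theorem~\ref{hkw-649}. Summing over $\Rel_{b}$ with these local terms and the overall sign yields the asserted formula.

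The main obstacle is precisely this last step: showing that, in the non-transverse situation forced by dropping ellipticity (and by allowing $b$ to be non-basic), the Lefschetz--Verdier local terms at the stratum-$b$ fixed points are genuinely local and evaluate to the claimed products $\dim r_{\mu}[\lambda]\,\Theta_{\rho}(g')\prod_{\alpha\in\Phi^{+}}|1-\alpha(\lambda)|^{-1}$. This needs a careful study of the geometry of the $B_{\mathrm{dR}}^{+}$-affine Grassmannian and of the stratum $\Bun_{G}^{b}$ near such a fixed point, together with the compatibilities of characteristic classes with the cohomologically smooth Hecke correspondence; the openness theorem is exactly what licenses the stratum-by-stratum reduction that makes this analysis tractable in the first place.
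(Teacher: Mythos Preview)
Your overall architecture matches the paper's: realize $\pi$ via the Hecke operator, use the openness of the strongly regular locus of the inertia stack to decouple the strata of $\In(\Bun_G)$, kill the boundary strata $b'\in\overline{\{b\}}\setminus\{b\}$ in $B(G,\mu)$ by the stable-conjugacy hypothesis, and then evaluate the lone stratum-$b$ contribution. Where you diverge from the paper is in the step you yourself flag as the obstacle, namely the origin of the factor $\prod_{\alpha\in\Phi^{+}}|1-\alpha(\lambda)|^{-1}$.

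Your heuristic attributes this factor to a non-transversality/Lefschetz local-term computation along the roots in the pro-unipotent part of $\widetilde{G}_b$, which you identify with $\Phi^{+}$. In the paper's conventions this identification is off: the positive part $\widetilde{G}_b^{>0}$ is built from the \emph{negative}-slope pieces of the adjoint isocrystal, hence carries the roots $\Phi^{-}$, and the factor it contributes is $|D_b^{-}|=\prod_{\alpha\in\Phi^{-}}|1-\alpha(\lambda)|$ (not its inverse). Concretely, the paper does not invoke a geometric-series argument at a non-transverse fixed point. Instead it proves a general formula for how characteristic classes transform under the cohomologically smooth map $[S/\underline{G_b(F)}]\to[S/\widetilde{G}_b]$: the discrepancy is governed by the action of conjugation on the compactly supported cohomology of the Banach--Colmez pieces of $\widetilde{G}_b^{>0}$, and this action is computed explicitly to be multiplication by $|D_b^{-}|^{-1}$. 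This yields $\cc_{\Bun_G}(i_{b*}\rho)|_{b,\mathrm{sr}}=|D_b^{-}|\cdot\operatorname{tr.dist}(\rho)|_{\mathrm{sr}}$. The second ingredient is then purely representation-theoretic: converting the resulting identity of invariant \emph{distributions} into an identity of Harish--Chandra \emph{characters} via the Weyl integration formula introduces $\prod_{\alpha\in\Phi^{+}\cup\Phi^{-}}|1-\alpha(\lambda)|^{-1}$. The product of these two factors is exactly $\prod_{\alpha\in\Phi^{+}}|1-\alpha(\lambda)|^{-1}$. So the ``obstacle'' dissolves into two separate, explicit computations rather than a single local-term analysis. (A minor point: the paper uses $i_{b*}\rho$, not $i_{b!}\rho$; the two differ precisely by a twist coming from the map $[S/\underline{G_b(F)}]\to[S/\widetilde{G}_b]$, which is tied up with the same $|D_b^{-}|$ bookkeeping.)
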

Our definition of $\Rel_b$ is slightly different from the one in \cite{hkw}, but the elliptic locus of our $\Rel_b$ is the same as the elliptic locus of theirs.
\begin{ex}
Let $M$ be a Levi subgroup of $G$, let $\mu$ be a cocharacter of $G$ whose centralizer is $M$,
and let $b=\mu(p)$.  Then $G_b = M$.  After accounting for normalization,
Theorem \ref{trace intro} recovers van Dijk's formula for the Harish-Chandra
character of a parabolically induced representation \cite[Theorem 3]{vandijk}.
\end{ex}

A key step in the proof of Theorem \ref{trace intro} is the following result:
\begin{thm}[Theorem \ref{sr open}] \label{sr open intro}
For each $b \in B(G)$, the strongly regular locus of the inertia stack
of $\Bun_G^b$ is open in the inertia stack of $\Bun_G$.
\end{thm}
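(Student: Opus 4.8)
The plan is to work on the level of $S$-points for a perfectoid test scheme $S$, and to recall that the inertia stack $\In_S(\Bun_G)$ has as objects pairs $(\mathcal{E}, g)$ where $\mathcal{E}$ is a $G$-bundle on the relative Fargues–Fontaine curve $X_S$ and $g$ is an automorphism of $\mathcal{E}$. The Harder–Narasimhan/Newton stratification gives the locally closed substack $\Bun_G^b \subset \Bun_G$ with automorphism sheaf a group-diamond $\widetilde{G}_b$ that is an extension of $\underline{G_b(F)}$ by a ``unipotent'' part $\mathcal{M}_b$ built out of the positive-slope summands of $\ad(\mathcal{E}_b)$. An automorphism $g$ of a point of $\Bun_G^b$ thus has a semisimplification landing in $G_b(F)$ and a ``pro-unipotent'' correction; the strongly regular locus $\In_S(\Bun_{G,C}^b)^{\sr}$ is the open-and-closed-on-each-stratum locus where the centralizer of $g$ (a group-diamond over $S$) is a torus, equivalently is $0$-dimensional and smooth of the expected rank. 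The key point is to show this condition, which is manifestly open \emph{within} $\In_S(\Bun_G^b)$, is in fact open in the total inertia $\In_S(\Bun_G)$, i.e.\ it cannot be lost under a specialization of $b$ to some $b'$ with $b' \leq b$.

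First I would reduce to a statement about the boundary: since $\In_S(\Bun_G^b) \hookrightarrow \In_S(\Bun_G)$ is locally closed, and the strongly regular locus is open in it, it suffices to show that every point $x$ of $\In_S(\Bun_{G}^{b})^{\sr}$ has an open neighbourhood in $\In_S(\Bun_G)$ that meets only $\In_S(\Bun_G^b)$ — in other words, that $x$ admits no specialization from a point $(\mathcal{E}', g')$ with $\mathcal{E}' \in \Bun_G^{b'}$, $b' > b$. The mechanism is the semicontinuity of the Newton point together with the following dimension count: if $(\mathcal{E}', g')$ degenerates to $(\mathcal{E}_b, g)$, then the centralizer $Z_{\widetilde{G}_{b'}}(g')$ degenerates into $Z_{\widetilde{G}_b}(g)$, and the latter is a torus (pure dimension $0$ over $S$, after quotienting by the appropriate $\underline{G_{b'}(F)}$-part it is the $v$-sheaf associated to a torus over $F$). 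But the automorphism group $\widetilde{G}_{b}$ of the \emph{larger} stratum $\Bun_G^{b}$ is \emph{bigger} than that of $\Bun_G^{b'}$ when $b > b'$ — more precisely the positive-slope part $\mathcal{M}_b$ has positive dimension exactly when the generic degeneration is to a lower stratum — and one shows that a strongly regular $g$ on the stratum $\Bun_G^b$ has centralizer of dimension $\dim \mathcal{M}_b^{g}$ which, for the centralizer to be a torus (hence to have vanishing positive-dimensional part), forces a constraint of the form $\alpha(g^{\ss}) \neq 1$ for the relevant roots $\alpha$ — precisely the roots $\Phi^+$ appearing in Theorem \ref{trace intro}. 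I would phrase this by computing $\Lie$ of the centralizer group-diamond via the $\widetilde{G}_b$-action on $\ad(\mathcal{E}_b)$, decomposing $\ad(\mathcal{E}_b) = \bigoplus_\lambda \ad(\mathcal{E}_b)_\lambda$ into eigen-sub-bundles for $g^{\ss} \in G_b(F)$, noting $H^0$ and $H^1$ of the positive-slope (resp.\ negative-slope) summands, and observing that strong regularity of $g$ is exactly the vanishing of the relevant cohomology that would otherwise contribute positive-dimensional pieces to the centralizer on the boundary stratum.

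The step I expect to be the main obstacle is making precise and rigorous the ``the centralizer degenerates into the centralizer'' claim in the diamond setting — i.e.\ the behaviour of the fibral group-diamond $\uAut$ and its centralizer subsheaf $\Isom(g,g) = Z_{\uAut}(g)$ under specialization/generization in the $v$-topology. Concretely, I would need: (a) a clean description of $\uAut_{\mathcal{E}}$ over a neighbourhood that straddles $\Bun_G^{b'}$ and $\Bun_G^{b}$ — available from the theory of the ``charts'' of $\Bun_G$ near a point and from Fargues–Scholze's analysis of automorphism groups along the stratification (an extension $1 \to \mathcal{M}_b \to \widetilde{G}_b \to \underline{G_b(F)} \to 1$, with $\mathcal{M}_b$ a successive extension of Banach–Colmez spaces with positive-slope Lie algebra); and (b) the semicontinuity statement that if $g$ is strongly regular on $\Bun_G^b$ then the centralizer $Z_{\widetilde{G}_b}(g)$ has no component of positive dimension, so by upper semicontinuity of fibre dimension of the centralizer group-diamond over $\In_S(\Bun_G)$, a whole neighbourhood of $x$ has $0$-dimensional centralizer, hence — since strongly regular is an open condition cutting out exactly the locus of torus centralizers inside the $0$-dimensional-centralizer locus on each Newton stratum — lies in $\In_S(\Bun_G^b)^{\sr}$. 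Granting the semicontinuity of fibre dimension for group-diamonds of this shape (which should follow from cohomological smoothness of the relevant Banach–Colmez constructions, cf.\ the ``characteristic classes under cohomologically smooth pullback'' results promised in the abstract), the rest is a root-combinatorics bookkeeping identical to the $\Phi^+$ appearing in Theorem \ref{trace intro}.
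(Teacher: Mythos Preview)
Your proposal has a genuine gap in the central step. The semicontinuity-of-centralizer-dimension argument, even if it could be made rigorous for group diamonds, does not yield the conclusion: having a $0$-dimensional (or torus) centralizer does \emph{not} pin down the Newton stratum. On the basic stratum $\Bun_G^1$, for instance, the automorphism sheaf is already the $0$-dimensional group $\underline{G(F)}$, so \emph{every} automorphism there has $0$-dimensional centralizer. A nearby point of $\In(\Bun_G)$ with torus centralizer could thus perfectly well lie in a strictly more semistable stratum $b'$; your dimension count offers no obstruction in that direction, and the closure relations among Newton strata run exactly so that this is the case you must exclude. The final clause ``hence \dots\ lies in $\In_S(\Bun_G^b)^{\sr}$'' is therefore a non sequitur. (There is also some confusion in your write-up about which way specialization goes in the Newton order, but that is secondary to the main gap.)

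The paper's argument is quite different and sidesteps this issue entirely. The key tool is the characteristic-polynomial map $\chi \colon \In(\Bun_G) \to \underline{(G//G)(F)}$, built from the fact that any map $X_S \to G//G$ factors through $\underline{(G//G)(F)}$. Given a strongly regular pair $(\EE,g)$ over a perfectoid $S$ and a point $z$ landing in stratum $b$, one chooses a maximal torus $T \subset G_b$ through the reductive part of $g_z$. The map $T(F)_{\sr} \to (G//G)(F)_{\sr}$ is a local homeomorphism (Chevalley restriction plus the $p$-adic inverse function theorem), so $\chi \circ g \colon |S| \to (G//G)(F)$ lifts, after shrinking $S$, to a map $|S| \to T(F)_{\sr}$. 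This lift furnishes a second family $(\EE_{b,S}, g')$ with $g$ and $g'$ \'etale-locally conjugate, and then a standard twisting construction reduces the structure group of $\EE$ to $T$. The punch line is that $|\Bun_T|$ is \emph{discrete}, so the composite $|S| \to |\Bun_T| \to |\Bun_G|$ is locally constant and a whole neighbourhood of $z$ stays in $\Bun_G^b$. The discreteness of $|\Bun_T|$ is the real obstruction to crossing strata while carrying a strongly regular automorphism; your centralizer-dimension heuristic does not see it.
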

Here, $\Bun_G$ is the stack classifying $G$-bundles on the Fargues--Fontaine curve.
This theorem also holds when $F = \mathbb{F}_q((T))$.

The reason that Theorem \ref{trace intro} does not cover the entire strongly regular locus is the following.
An admissible representation $\rho$ of $G_b(F)$ can be considered as a constructible sheaf on $\Bun_G^b$,
and the trace distribution of $\rho$ can be regarded as the characteristic class of this sheaf.
One can also consider the characteristic class of the sheaf $i_{b*} \rho$ on $\Bun_G$.
Theorem \ref{sr open intro} implies that this characteristic class determines
a trace distribution on the strongly
regular locus of $G_{b'}(F)$ for each $b' \in B(G)$.
The $b$-component is the usual trace distribution of $\rho$.
If $b'$ is a specialization of $b$,
then the $b'$-component may be nonzero.  These components show up
in the computation of $\Theta_{\pi}$.

The problem of computing these $b'$-components seems interesting in its own right.
We compute some of these components in Examples \ref{gl2 example}-\ref{depth one example},
but we do not have a general formula.
In the case where $\Sht_{G,b,\mu}$ is the Drinfeld upper half plane,
Example \ref{gl2 example hecke} uses the result of Example \ref{gl2 example}
to compute $\Theta_{\pi}$ over the entire strongly regular locus.

In order to determine the factor of
$\prod_{\alpha \in \Phi^+} \abs{1-\alpha(\lambda)}^{-1}$ appearing in
Theorem \ref{trace intro}, and to work out Examples
\ref{gl2 example}-\ref{depth one example}, we establish a method for computing
the behavior of characteristic classes under cohomologically smooth pullback.
This method is developed in section \ref{cc smooth}.

\subsection{Outline of the paper}
In section \ref{bundles general}, we prove some general results about endomorphisms of $G$-bundles. 
In Section \ref{sr}, we use these results to prove Theorem \ref{sr open intro}.
In Section \ref{char class section}, we prove some results on the behavior of characteristic classes under
pullback, and use these to compute characteristic classes on $\Bun_G$.
In Section \ref{trace section}, we explain how to modify the argument of
\cite{hkw} to prove Theorem \ref{trace intro}.

\subsection*{Acknowledgments}
I would like to thank David Hansen and Jared Weinstein for helpful discussions.
I would also like to thank the anonymous referee for some corrections.
This work was supported in part by the Simons Foundation (Grant Number
814268 via the Mathematical Sciences Research Institute, MSRI), the
Deutsche Forschungsgemeinschaft (DFG, German Research Foundation) under Germany's
Excellence Strategy -- EXC-2047/1 -- 390685813,
National Science Foundation Grant No.~DMS-1840190,
and a C.~E.~Burgess Instructorship at the University of Utah.
I would also like to thank Boston University for its hospitality.

\section{Automorphisms of $G$-bundles in general} \label{bundles general}
Let $F$ be a finite extension of $\Qp$ or $\Fp((t))$, and let $G$ be a connected
reductive group over $F$.
We will sometimes abuse notation and write $G$ for the diamond
$(G^{\an})^{\lozenge}$, and do similarly with other schemes.

\subsection{Centralizers and bundles}
Since we are interested in $G$-bundles on the Fargues--Fontaine curve,
it is natural to work in the category of sousperfectoid spaces
defined in \cite{hansen-kedlaya-sheafiness} (see also \cite[\S 6.3]{berkeley}).
There are several equivalent ways of defining $G$-bundles
on a sousperfectoid space.
\begin{defprop}[{\cite[Theorem 19.5.2]{berkeley},
\cite[Definition/Proposition III.1.1]{fargues-scholze}}]
\label{bundle def}
Let $Y$ be a sousperfectoid space.
The following categories are naturally equivalent:
\begin{enumerate}
\item The category of adic spaces $T \to Y$ with a $G$-action
such that \'etale locally on $Y$, there is a $G$-equivariant
isomorphism $T \cong G \times Y$.
\item The category of \'etale sheaves $\EE$ on $Y$ with
a $G$-action such that \'etale locally, $\EE \cong G$. \label{sheaf def}
\item The category of exact $\otimes$-functors from the category $\Rep_F G$
of algebraic $F$-vector space representations of $G$ to the category $\Bun(Y)$ of vector bundles
on $Y$. \label{tannakian def}
\end{enumerate}
We define a \emph{$G$-bundle} on $Y$ to be an object in the category (3).
We will sometimes also view it as an object in one of the
first two categories.
\end{defprop}

\begin{defn}[{\cite[Definition 5.3.2]{berkeley}}]
Let $Y$ be a sousperfectoid space.  A \emph{Cartier divisor}
on $Y$ is is an ideal sheaf $\mathcal{I} \subset \OO_Y$ that is locally
free of rank $1$.
\end{defn}

\begin{defn} \label{modification def}
Let $Y$ be a sousperfectoid space, let $\mathcal{I}$
be a Cartier divisor on $Y$, and let $U$ be the complement
of the support of $\OO_Y/\mathcal{I}$.

Let $\VV$, $\VV'$ be vector bundles on $Y$.
A \emph{modification} $\EE \dashrightarrow\EE'$
along $\mathcal{I}$ is an isomorphism of vector bundles
$\VV|_U \isom \VV'|_U$
that extends to a map of sheaves of $\OO_Y$-modules
$\VV \to \varinjlim_n \VV' \otimes \mathcal{I}^{\otimes-n}$.

Let $\EE$, $\EE'$ be $G$-bundles on $Y$.
A \emph{modification} $\EE \dashrightarrow \EE'$
along $\mathcal{I}$ is an isomorphism of $G$-bundles
$\EE|_U \isom \EE'|_U$ such that for any object of $\Rep_F G$,
the associated map of vector bundles is a modification along $\mathcal{I}$.
\end{defn}

\begin{prop} \label{bundle centralizer}
Let $Y$ be a sousperfectoid space over $F$, and let $\EE$, $\EE'$ be
$G$-bundles on $Y$.  Let $g \in \Aut \EE$, $g' \in \Aut \EE'$.
Let $\widetilde{\EE} \subset \underline{\Aut} \EE$ denote the centralizer of $g$, and let
$\widetilde{\EE}'$ denote the equalizer of the maps $\Isom(\EE,\EE') \to \Isom(\EE,\EE')$ given by $x \mapsto gx$ and $x \mapsto xg'$.

Suppose that
\'etale locally on $Y$, $g$ and $g'$ are conjugate.
Also  suppose that we are given a connected reductive group $H$ over $F$
and an isomorphism $h \colon H \times_F Y \isom \widetilde{\EE}$ of groups over $Y$.

Then:
\begin{enumerate}
\item $\widetilde{\EE}'$ is an $H$-bundle.
\item There is an isomorphism of $G$-bundles $\EE' \cong (\widetilde{\EE}' \times_Y \EE)/H$.
\end{enumerate}

Further suppose that there exists a finite extension $K$ of $F$ and a homomorphism
$h' \colon H_K \to G_K$ such that the induced map $H_K \times_F Y \to G_K \times_F Y$ is \'etale locally
conjugate to $h_K$.
Then:
\begin{enumerate}
\setcounter{enumi}{2}
\item For any Cartier divisor $\mathcal{I}$ on $Y$,
the isomorphism (2) induces a bijection between modifications of $G$-bundles
$\EE \dashrightarrow \EE'$ along $\mathcal{I}$ intertwining $g$ and $g'$ and modifications
of $H$-bundles $\widetilde{\EE} \dashrightarrow \widetilde{\EE}'$ along $\mathcal{I}$.
\end{enumerate}
\end{prop}
\begin{proof}
Items (1) and (2) are clear from Definition/Proposition \ref{bundle def}(\ref{sheaf def}).

Now we will prove item (3).  Let $U$ be the complement of the support of $\OO_Y/\mathcal{I}$.
It is clear that (2) induces a bijection between
isomorphisms $\EE|_U \cong \EE'|_U$ intertwining $g$ and $g'$
and isomorphisms $\widetilde{\EE}|_U \cong \widetilde{\EE}'|_U$.
It remains to check that this bijection preserves the property of being meromorphic along $\mathcal{I}$.

By \'etale descent for vector bundles \cite[Theorem 8.2.22(d)]{kl-relative},
we can reduce to the case where $\EE$, $\EE'$ are trivial and $K=F$.
Let $U$ be the complement of the support of $\OO_Y/\mathcal{I}$.
Modifications $\EE \dashrightarrow \EE'$ along $\mathcal{I}$ are then
identified with maps $\phi \colon U \to G$ such that for every
finite-dimensional algebraic representation $r \colon G \to GL(V)$
(equivalently, for some faithful $r$), $r \circ \phi$ induces
an automorphism of $V \otimes_F \OO_U$ that is meromorphic along $\mathcal{I}$.
If $r$ is faithful, then $r \circ h'$ is also faithful.
Conjugating an automorphism of $V \otimes_F \OO_U$ by an automorphism
of $V \otimes_F \OO_Y$ will not change whether the former is meromorphic along $\mathcal{I}$.
\end{proof}

\subsection{The coarse quotient $G//G$, and the strongly regular locus}

Consider the action of $G$ on itself by conjugation, and let
$G//G = \Spec \left(\Gamma(G,\OO_G)^G \right)$ denote the coarse quotient.

Any morphism $H \to G$ of connected reductive groups over $F$ induces a morphism
$H//H \to G//G$.

Let $Y$ be a scheme or sousperfectoid space over $F$, and let $\EE$ be a $G$-bundle over $Y$.
Suppose we are given an \'etale covering $Z \to Y$ along with a trivialization
$\tau \colon \EE \times_Y Z \xrightarrow{\sim} G \times_F Z$.
The action $\uAut \EE \times_Y (G \times_F Z) \to (G \times_F Z)$
is of the form $(h,g,z) \mapsto (g \gamma_{\tau}(h,z),z)$ for some
$\gamma_{\tau} \colon \uAut \EE \times_Y Z \to G$.
The map $\gamma_{\tau}$ depends $\tau$ only up to conjugation.
Moreover, the composite
\[
\uAut \EE \times_Y Z \xrightarrow{\gamma_{\tau}} G \to G//G
\]
descends to a map $\chi_{\EE} \colon \uAut \EE \to G//G$.
\begin{ex} \label{isom example}
If $G'$ is an inner form of $G$, then $\Isom(G,G')$ is a $G$-bundle over $F$, with
automorphism group $G'$.  There is an induced map $G' \to G//G$.
\end{ex}

\begin{defn}
Let $K$ be an extension of $F$.  An element of $G(K)$ is \emph{strongly regular} if its centralizer
is a torus.
\end{defn}

\begin{lem} \label{sr pullback}
The locus of strongly regular points of $G$ is an open subscheme $G_{\sr}$ of $G$.
This open subscheme is the pullback of an open subscheme $(G//G)_{\sr}$ of the coarse quotient $G//G$.
\end{lem}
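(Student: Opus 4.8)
The plan is to reduce the statement to a pointwise criterion and then to a standard fact about conjugation-invariant functions on a reductive group. First I would observe that strong regularity of $g \in G(K)$ depends only on the geometric conjugacy class of $g$, i.e.\ on the image of $g$ in $G//G$ under $\chi\colon G \to G//G$: indeed, the centralizer $Z_G(g)$ is a torus if and only if it is connected of dimension equal to $\rank G$ (equivalently, the centralizer of a maximal torus containing a maximal torus of $Z_G(g)^\circ$ is as small as possible), and over an algebraically closed field these numerical conditions are insensitive to conjugation. So it suffices to produce an open subscheme $(G//G)_{\sr} \subseteq G//G$ whose preimage is exactly the strongly regular locus; then $G_{\sr} = \chi^{-1}((G//G)_{\sr})$ is automatically open in $G$, being the preimage of an open set under the morphism $\chi$.

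Next I would exhibit $(G//G)_{\sr}$ concretely. The dimension of $Z_G(g)^\circ$ is an upper semicontinuous function of $g$ on $G$, and the minimum value is $\rank G$, attained exactly on the regular locus $G_{\reg}$; thus $G_{\reg}$ is open. On $G_{\reg}$, the centralizer $Z_G(g)^\circ$ is a maximal torus, so $g$ is strongly regular precisely when $Z_G(g)$ is connected, i.e.\ $Z_G(g) = Z_G(g)^\circ$. The locus where this holds is again open inside $G_{\reg}$: one can see this, for instance, by noting that over $G_{\reg}$ there is the abelian group scheme $\underline{Z_G}^\circ$ of connected centralizers (a torus bundle, by the theory of regular centralizers / the Grothendieck–Springer resolution), and the locus where the full centralizer scheme $\underline{Z_G}$ agrees with $\underline{Z_G}^\circ$ is open because both are flat of the same relative dimension over $G_{\reg}$ and the inclusion is a closed immersion whose complement is closed. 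Hence $G_{\sr}$ is open in $G$.

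It remains to check that $G_{\sr}$ is a union of conjugacy classes — which we already noted — and therefore descends to an open $(G//G)_{\sr}$. For this I would use that $\chi\colon G \to G//G$ is a quotient by a geometrically reductive (indeed reductive) group, so it is submersive: a subset of $G//G$ is open if and only if its preimage is open and saturated. Since $G_{\sr}$ is open and saturated (a union of fibers of $\chi$), its image $(G//G)_{\sr}$ is open, and $\chi^{-1}((G//G)_{\sr}) = G_{\sr}$. This gives both assertions of the lemma.

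The main obstacle I anticipate is the openness of the condition ``$Z_G(g)$ is connected'' within $G_{\reg}$ — i.e.\ ruling out a jump in the component group along a specialization. The clean way around this is to invoke the structure of regular centralizers: over $G_{\reg}$ the centralizer group scheme $\underline{Z_G}$ is a smooth commutative group scheme whose fibers are of the form $T^{F'}$ for tori $T$ and finite groups, and the identity component $\underline{Z_G}^\circ \hookrightarrow \underline{Z_G}$ is an open and closed immersion fiberwise; flatness of $\underline{Z_G}$ over $G_{\reg}$ then forces the locus of connected fibers to be open. Alternatively, one can argue valuatively using that a specialization cannot decrease the number of connected components of a flat group scheme with geometrically connected special fiber only if it is already connected. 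Everything else — semicontinuity of centralizer dimension, submersiveness of $\chi$, conjugation-invariance — is routine.
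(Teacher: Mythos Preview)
Your argument contains a genuine error: you claim that on the regular locus $G_{\reg}$, the identity component $Z_G(g)^\circ$ is always a maximal torus. This is false. A regular element need not be semisimple; for instance, a regular unipotent $u$ in $\GL_2$ has connected centralizer of dimension $2 = \rank G$, but that centralizer is not a torus. Under your criterion ``$Z_G(g)$ connected within $G_{\reg}$'', such $u$ would be declared strongly regular, which it is not. The fix is to replace $G_{\reg}$ by the regular \emph{semisimple} locus $G_{\rs}$ throughout; on $G_{\rs}$ the identity component of the centralizer really is a maximal torus, and your connectedness criterion becomes correct. You then also need that $G_{\rs}$ itself is open and descends to $G//G$, which follows because regular semisimplicity is detected by a discriminant in $\Gamma(G,\OO_G)^G$. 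There is a related smaller gap: you identify ``geometric conjugacy class of $g$'' with ``fiber of $\chi$ over $\chi(g)$'', but these differ in general (the fiber contains all elements with the given semisimple part). On $G_{\rs}$ the two notions do coincide, since a toral centralizer contains no nontrivial unipotents, but this deserves a sentence.

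Your openness argument for the connected-centralizer locus is also shaky as written: the inclusion $\underline{Z_G}^\circ \hookrightarrow \underline{Z_G}$ is an \emph{open} immersion, not a closed one, so the ``complement is closed'' reasoning does not go through directly. The paper sidesteps all of this by passing to the split case and using Chevalley restriction $G//G \cong T/W$: for $g \in T$ regular semisimple one has $Z_G(g) \subseteq N_G(T)$, so $Z_G(g)$ is connected iff no nontrivial $w \in W$ fixes $g$; the complement of $\bigcup_{w \neq 1} T^w$ is a $W$-stable open in $T$ and hence descends to an open in $T/W$. If you try to make your centralizer-scheme argument precise on $G_{\rs}$, you will essentially rediscover this Weyl-group characterization, so the paper's route is both shorter and more transparent.
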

\begin{proof}
Since the property of being strongly regular is insensitive to field extensions, there
is no harm in assuming that $G$ is split.  Let $T$ be a split maximal torus of $G$,
and let $W$ be the Weyl group of $G$.

Let $g$ be a geometric point of $G$.  If $g$ is strongly regular, then it is
regular semisimple, i.e.~the multiplicity of the eigenvalue $1$ in the adjoint action
of $g$ on $\mathfrak{g}$ is as small as possible.  It is clear that the regular
semisimple locus is the pullback of an open subscheme of $G//G$.

If $g$ is regular semisimple, then it is
conjugate to a geometric point of $T$.
Using the Bruhat decomposition, we see that a regular semisimple geometric point of $T$ is
strongly regular iff its
stabilizer under the action of the Weyl group $W$ is trivial.
The locus of points of $T$ with trivial stabilizer is a
$W$-stable open subspace of
$T$.  By \cite[Proposition V.1.1(ii)]{SGA1}, this
locus is the pullback of an open subspace of $\Spec \left(\Gamma(T,\OO_T)^W\right)$.

By Chevalley restriction, the map $\Gamma(G,\OO_G)^G \to \Gamma(T,\OO_T)^W$ is an isomorphism.
So the strongly regular locus of $G$ is the pullback of an open subscheme
of $G//G$.
\end{proof}

\begin{defn}
Let $X$ be a scheme, sousperfectoid space, or v-stack over $G//G$.
The \emph{strongly regular locus} of $X$, denoted $X_{\sr}$, is the pullback $X \times_{G//G} (G//G)_{\sr}$.
\end{defn}

Two strongly regular elements of $G$ are stably conjugate if and only
if they have the same image in $G//G$.  The following lemma generalizes this fact.
\begin{lem} \label{etale local section}
The map $G \times_F G_{\sr} \to G_{\sr} \times_{G//G} G_{\sr}$ given by
$(g,h) \mapsto (h,ghg^{-1})$ admits an \'etale local section.
\end{lem}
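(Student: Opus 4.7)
The plan is to show that the map in question, call it $\pi \colon G \times_F G_{\sr} \to G_{\sr} \times_{G//G} G_{\sr}$, is smooth and surjective. Any smooth morphism of finite type schemes admits sections \'etale-locally on its image, so this will give the claim.

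For surjectivity on geometric points, I would argue as in Lemma \ref{sr pullback}. Given strongly regular $h, h'$ with the same image in $G//G$, pass to an algebraic closure and reduce to the split case. Both $h$ and $h'$ then lie in maximal tori, which may be conjugated to a common split maximal torus $T$. By Chevalley restriction, the map $T_{\sr} \to (G//G)_{\sr}$ factors through $T_{\sr}/W$, so $h$ and $h'$ differ by an element of the Weyl group $W$, hence are $G$-conjugate.

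For smoothness, I would compute the differential directly. Identifying tangent spaces to $G$ via right translation, a routine first-order calculation gives that the differential of $\pi$ at $(g, h)$ sends $(\xi, \eta) \in \mathfrak{g} \oplus \mathfrak{g}$ to
\[ \bigl(\eta, \,\Ad(g)\bigl((1-\Ad(h))\xi + \Ad(h)\eta\bigr)\bigr). \]
Its kernel consists of pairs with $\eta = 0$ and $\xi \in \ker(1-\Ad(h)) = \Lie Z_G(h)$; since $h$ is strongly regular, $Z_G(h)$ is a torus of dimension $\rank G$. Thus the image of the differential has constant dimension $2\dim G - \rank G$. On the other hand, $(G//G)_{\sr}$ is smooth of dimension $\rank G$ by Lemma \ref{sr pullback} and Chevalley restriction, and $G_{\sr} \to (G//G)_{\sr}$ is smooth of relative dimension $\dim G - \rank G$ (its fibers over geometric points are single regular semisimple conjugacy classes, each a $G$-homogeneous space with torus stabilizer), so $G_{\sr} \times_{G//G} G_{\sr}$ is smooth of dimension $2\dim G - \rank G$. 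Since the differential of $\pi$ surjects onto the tangent space of the smooth target, $\pi$ is smooth.

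The step most prone to technical fuss is the assertion that $G_{\sr} \to (G//G)_{\sr}$ is smooth, which enters in identifying the dimension and smoothness of the fiber product; but this can be reduced to the split case by \'etale descent and there verified via the concrete description of regular semisimple conjugacy classes. Everything else is a short direct computation.
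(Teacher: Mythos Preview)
Your proposal is correct and follows the same strategy as the paper's primary proof: show the map is smooth and surjective, then invoke the existence of \'etale-local sections for smooth surjective morphisms. The paper is terser---citing \cite[Corollary 6.6]{steinberg-regular} for surjectivity and calling smoothness ``straightforward to check using Chevalley restriction''---and it additionally offers an alternative explicit section built from the Bruhat decomposition, which you do not attempt.
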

\begin{proof}
We claim the map is smooth and surjective.  Indeed, it is straightforward
to check smoothness using Chevalley restriction, and \cite[Corollary 6.6]{steinberg-regular} implies surjectivity.  Then an \'etale local section exists by
\cite[Corollaire 17.16.3(ii)]{EGA44}.

Alternatively, one can use the Bruhat decomposition to give an explicit
\'etale local section.  Choose a finite extension $K/F$ such that
$G_K$ is split, and let $B$ and $\overline{B}$ be opposite Borel subgroups of
$G_K$.  Let $T = B \cap \ol{B}$, and let $N$ and $\ol{N}$ be the unipotent
radicals of $B$ and $\ol{B}$, respectively.  Let $W$ be the Weyl group of
$G_K$.  Recall that
$G_K = \bigsqcup_{w \in W} Bw\ol{B}$, and that $Bw\ol{B}\subseteq B\ol{B}w = N \ol{N} w T$.
Therefore, $G/T = \cup_{w \in W} N \ol{N} w T/T$.
Choose a representative in the normalizer of $T$ for each element of $W$.
Then there is an \'etale covering
\[ T_{\sr} \times_K N \times_K \ol{N} \times_K N \times_K \ol{N} \times W  \to G_{\sr} \times_{G//G} G_{\sr} \]
given by
\[ (t,n_1,\bar{n}_1,n_2,\bar{n}_2,w) \mapsto
(n_1 \bar{n}_1 t \bar{n}_1^{-1} n_1^{-1},
n_2 \bar{n}_2 wtw^{-1} \bar{n}_2^{-1} n_2^{-1})  \,, \]
and the map to $G \times_F G_{\sr}$ given by
\[ (t,n_1,\bar{n}_1,n_2,\bar{n}_2,w) \mapsto
(n_2 \bar{n}_2 w \bar{n}_1^{-1} n_1^{-1},
n_1 \bar{n}_1 t \bar{n}_1^{-1} n_1^{-1}) \]
is the desired \'etale local section.
\end{proof}

\begin{cor} \label{sr etale locally conj}
Let $Y$ be a sousperfectoid space over $F$, let $\EE$, $\EE'$ be
$G$-bundles over $Y$, and let $g \in \Aut \EE$, $g' \in \Aut \EE'$
be strongly regular automorphisms.
Then $g$ and $g'$ are \'etale locally conjugate iff the diagram
\[ \begin{tikzcd}
Y \arrow[r,"g"] \arrow[d,"g'"] & \uAut \EE \arrow[d,"\chi_{\EE}"] \\
\uAut \EE' \arrow[r,"\chi_{\EE'}"] & G//G
\end{tikzcd} \]
commutes.
\end{cor}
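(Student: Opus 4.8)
The plan is to prove both implications by passing to an \'etale cover of $Y$ over which $\EE$ and $\EE'$ become trivial, so that $g$ and $g'$ are encoded by maps to $G$ and the statement becomes a question about conjugacy of such maps, controlled by Lemmas~\ref{sr pullback} and~\ref{etale local section}. Throughout I view $g$ and $g'$ as sections $Y \to \uAut\EE$ and $Y \to \uAut\EE'$, so that the two legs of the square are the maps $\chi_{\EE} \circ g, \chi_{\EE'}\circ g' \colon Y \to G//G$.

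For the ``only if'' direction, suppose $g$ and $g'$ are \'etale locally conjugate. Since $G//G$ is an affine $F$-scheme, the maps $\chi_{\EE}\circ g$ and $\chi_{\EE'}\circ g'$ agree as soon as they agree after pullback along some \'etale cover. Choose an \'etale cover $Z\to Y$ over which $\EE$ and $\EE'$ are both trivial and over which there is an isomorphism $\psi \colon \EE_Z \isom \EE'_Z$ with $\psi g_Z\psi^{-1}=g'_Z$. Fixing trivializations, $g_Z$ and $g'_Z$ correspond to maps $\gamma,\gamma'\colon Z\to G$ and $\psi$ to an element $c\in G(Z)$ with $\gamma'=c\gamma c^{-1}$; in particular $\gamma$ and $\gamma'$ have the same image in $(G//G)(Z)$, and by the construction of $\chi$ this common image is $\chi_{\EE}\circ g|_Z = \chi_{\EE'}\circ g'|_Z$. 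Hence the square commutes.

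For the ``if'' direction, the assertion is \'etale local on $Y$, so we may replace $Y$ by an \'etale cover over which $\EE$ and $\EE'$ are both trivial, and fix trivializations; let $\gamma,\gamma'\colon Y\to G$ be the maps attached to $g,g'$. Since $g$ and $g'$ are strongly regular, $\gamma$ and $\gamma'$ factor through $G_{\sr}$ by Lemma~\ref{sr pullback}. Commutativity of the square says exactly that $\gamma$ and $\gamma'$ have the same image in $G//G$, so $(\gamma',\gamma)$ is a map $Y \to G_{\sr}\times_{G//G}G_{\sr}$. By Lemma~\ref{etale local section}, the conjugation map $G\times_F G_{\sr}\to G_{\sr}\times_{G//G}G_{\sr}$, $(h,k)\mapsto(k,hkh^{-1})$, admits an \'etale local section; pulling it back along $(\gamma',\gamma)$ produces, after a further \'etale cover of $Y$, a map $h\colon Y\to G$ with $h\gamma'h^{-1}=\gamma$. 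Right translation by $h$ in the fixed trivializations then defines an isomorphism $\EE\to\EE'$ over this cover carrying $g'$ to $g$, so $g$ and $g'$ are \'etale locally conjugate.

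The only delicate point is the interaction between the scheme-theoretic inputs---the affine scheme $G//G$ and Lemmas~\ref{sr pullback} and~\ref{etale local section}---and the sousperfectoid setting: one needs that \'etale morphisms and fiber products of finite-type $F$-schemes pass correctly to the associated analytic spaces and diamonds, so that the \'etale local section of Lemma~\ref{etale local section}, an \'etale cover of $F$-schemes, base changes along the map $Y \to G_{\sr}\times_{G//G}G_{\sr}$ to an \'etale cover of $Y$. I expect this bookkeeping, rather than any genuine difficulty, to be the main thing to attend to.
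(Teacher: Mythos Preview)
Your proof is correct and is exactly the argument the paper has in mind: the corollary is stated without proof precisely because it follows from Lemma~\ref{etale local section} by trivializing \'etale locally and pulling back the \'etale local section along the resulting map $Y \to G_{\sr}\times_{G//G}G_{\sr}$. Your final paragraph's concern about passing from schemes to sousperfectoid spaces is indeed just bookkeeping, since \'etale morphisms of finite type $F$-schemes analytify to \'etale morphisms and base change to \'etale covers of $Y$.
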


\section{Automorphisms of $G$-bundles on the Fargues--Fontaine curve} \label{sr}

\subsection{$G$-bundles on the Fargues--Fontaine curve} \label{ff}
As in the previous section, $F$ denotes a finite extension of $\Qp$ or $\Fp((t))$,
and $G$ denotes a connected
reductive group over $F$.  Let $\breve{F}$ denote the completion of the maximal unramified extension
of $F$, and let $\sigma \colon \breve{F} \to \breve{F}$ denote the Frobenius map.

Given $b \in G(\breve{F})$, let $G_b$ denote the algebraic group over $F$ whose functor of points is given by
\[ G_b(R) = \left\{ g \in G(R \otimes_F \breve{F}) \middle| b \sigma(g) = g b \right\} \,. \]

The group $G_b$ is an inner form of a Levi subgroup of the quasisplit inner form of $G$.

The Kottwitz set $B(G)$ is defined to be the set of $\sigma$-conjugacy classes of $G(\breve{F})$.
In other words, two elements $b,b' \in G(\breve{F})$ are considered to be equivalent if there exists $\gamma \in G(\breve{F})$
such that $b' = \sigma(\gamma) b \gamma^{-1}$.  The isomorphism class of the group $G_b$ depends only on
the image of $b$ in $B(G)$.

Let $\Perfb$ denote the category of perfectoid spaces over $\Fpb$.
For any affinoid perfectoid $S = \Spa(R,R^+)$ in $\Perfb$ with pseudouniformizer $\varpi$, define
\[ Y_S = (\Spa W_{\OO_F}(R^+)) \setminus \{ p[\varpi] = 0 \} \]
\[ X_S = Y_S / \Frob^{\ZZ} \,. \]
The space $X_S$ is the adic Fargues--Fontaine curve corresponding to the pair $(F,S)$.
This definition can be glued, so it makes sense to define $Y_S$ and $X_S$ for any
$S$ in $\Perfb$.

For any $b \in B(G)$ and any $S \in \Perfb$, there is a corresponding vector bundle $\EE_{b,S}$ on $X_S$.

\begin{defn}[{\cite[Definition III.0.1, Theorem III.0.2]{fargues-scholze}}]
Let $\Bun_G$ be the v-stack sending $S \in \Perfb$ to the groupoid of
$G$-bundles on $X_S$.

For any $b \in \Bun_G$, let $\Bun_G^b$ be the substack classifying
$G$-bundles that are isomorphic to $\EE_{b,x}$ at every geometric point $x$.
\end{defn}

Let $\Gbt$ denote the functor $\Perfb \to \Set$ defined by $\Gbt(S) = \Aut \EE_{b,S}$.
It is representable by a locally spatial diamond.
There are maps $\Gbu \to \Gbt \to \Gbu$, whose composition
is the identity.

\subsection{The strongly regular locus of $\In(\Bun_G)$} \label{sr open section}

\begin{lem} \label{affine locally constant}
Let $S$ be a perfectoid space over $\Fpb$.
For any map $X_S \to (G//G)^{\lozenge}$, there is a unique continuous map
$\abs{S} \to (G//G)(F)$ that makes the following diagram commute.
\[ \begin{tikzcd}
X_S \arrow[r] \arrow[d] & (G//G)^{\lozenge} \\
\underline{\abs{S}} \arrow[r] & \underline{(G//G)(F)} \arrow[u]
\end{tikzcd} \]
\end{lem}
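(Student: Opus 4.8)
The plan is to reduce the statement to the fact that $G//G$ is an affine scheme of finite type over $F$, so that any map from the Fargues--Fontaine curve to $G//G$ factors through the constant points. First I would observe that $G//G = \Spec A$ for a finitely generated $F$-algebra $A$, so a map $X_S \to G//G$ is the same as an $F$-algebra map $A \to \Gamma(X_S, \OO_{X_S})$, equivalently a finite collection of global functions on $X_S$ satisfying the relations defining $A$. Thus the heart of the matter is to understand $\Gamma(X_S, \OO_{X_S})$: I claim that for $S = \Spa(R,R^+)$ affinoid perfectoid, $\Gamma(X_S,\OO_{X_S}) = B_{[0,\infty]}^{\Frob = 1}$ is canonically $\breve F \otimes_{?}$-type ring whose only idempotents and whose structure force a map to a finite-type $F$-algebra to be ``locally constant'' on $|S|$ with values in $(G//G)(F)$. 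Concretely, $X_S$ is proper and geometrically connected over $S$ in the relevant sense (the Fargues--Fontaine curve has no nonconstant global functions fiberwise), so any map to an affine $F$-scheme is constant on fibers; the content is upgrading this to a \emph{continuous} map on $|S|$ and identifying the target as the $F$-points $(G//G)(F)$ rather than $\overline F$-points.

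The key steps, in order: (1) Reduce to $G//G$ affine of finite type, hence to finitely many elements of $\Gamma(X_S,\OO_{X_S})$. (2) Show fiberwise constancy: for a geometric point $x = \Spa(C,C^+) \to S$, the curve $X_x$ is connected with $\Gamma(X_x,\OO_{X_x}) = F$ (this is a standard property of the Fargues--Fontaine curve — a global function is a Frobenius-fixed element of $B_x$, which is $F$ when $C$ is algebraically closed), so the composite $X_x \to X_S \to G//G$ lands in a single $F$-point. This gives a set-theoretic map $|S| \to (G//G)(F)$, and uniqueness is automatic since $\underline{|S|} \to \underline{(G//G)(F)}$ is determined by its effect on points. (3) Prove continuity: cover $(G//G)(F)$ — which is a finite-type adic space / scheme point set — by affinoid opens, and show that the preimage in $|S|$ of a basic open $\{|a| \le |b| \neq 0\}$ (for $a,b \in A$) is open, using that the corresponding functions on $X_S$ define a closed/open condition that spreads out; here one uses that $X_S \to S$ is a proper-type map (partial properness / the curve is ``spatial over $S$''), so the image of a closed subset of $X_S$ is closed in $|S|$, and conversely. (4) Assemble: the constructed map $|S| \to (G//G)(F)$ makes the square commute by construction on geometric points, and commuting on geometric points suffices since $X_S \to \underline{(G//G)(F)}$ is a map of v-sheaves.

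The main obstacle is step (3), the continuity: one must transfer a topological statement about the target $|X_S|$ (or rather about where global functions on $X_S$ vanish or are bounded) down to $|S|$ along $X_S \to S$. The tool is that $X_S \to S$ behaves like a proper morphism of spatial diamonds — in particular it is a quasicompact, quasiseparated, partially proper map with spatial fibers — so the pushforward of the spectral topology is well-behaved and the locus in $|S|$ cut out by a condition like ``$|a(x)| \le |b(x)|$ for all $x \in X_S$ above $s$'' is spectral (closed or open as appropriate). Concretely I would work on the reciprocal-period annulus $Y_S$, where $\Gamma(Y_S,\OO_{Y_S})$ admits an explicit description, intersect with the Frobenius-fixed part, and note that the finitely many global functions coming from $A$ each have constant fiberwise values, so their difference from any reference value vanishes identically on $X_x$ exactly when $s \in |S|$ lies in a certain set; the vanishing locus of a global section of $\OO_{X_S}$ pushes forward to a closed set in $|S|$ because the zero locus is closed in $|X_S|$ and the map is proper-type. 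Granting this spreading-out, the finitely many inequalities defining a basic open of $(G//G)(F)$ pull back to an open subset of $|S|$, giving continuity, and the lemma follows.
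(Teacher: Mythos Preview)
The paper's proof is a single citation: Fargues--Scholze \cite[Proposition II.2.5(ii)]{fargues-scholze} already asserts that maps $X_S \to \mathbb{A}^1_F$ are in natural bijection with continuous maps $|S| \to F$; since $G//G$ is an affine $F$-scheme one chooses coordinates and the lemma follows immediately. Your proposal is, in effect, an attempt to reprove that cited proposition from scratch.

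Your steps (1) and (2) are fine and match what one would do: reduce to finitely many global functions on $X_S$, and use $\Gamma(X_x,\OO_{X_x}) = F$ for geometric points to get a well-defined set map $|S| \to (G//G)(F)$. The difficulty, as you correctly identify, is step (3), continuity --- and here your argument does not close. You invoke that $X_S \to S$ is ``proper-type'' and that vanishing loci push forward to closed subsets of $|S|$, but the topology you need to hit on the target is the $p$-adic topology on $F$ (balls $a + \pi^n \OO_F$), not a Zariski or constructible one. Knowing that $\{s : f(s) = a\}$ is closed in $|S|$ for each $a \in F$ does not give continuity into $F$; you would need that $\{s : f(s) \in a + \pi^n \OO_F\}$ is open, and your closed-map reasoning does not supply this. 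The actual argument in Fargues--Scholze goes through the explicit identification $H^0(X_S,\OO_{X_S}) = C^0(|S|,F)$ via period-ring computations (Frobenius-fixed elements of $B$), not via topological properness.

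So your route is genuinely different --- more self-contained in intent --- but the one nontrivial step is left as a sketch that would require essentially the same input the paper cites. If you want a direct argument, the cleanest fix is to prove $H^0(X_S,\OO_{X_S}) \cong C^0(|S|,F)$ by computing $B^{\varphi=1}$ explicitly for affinoid $S$, rather than trying to extract continuity from properness.
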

\begin{proof}
By \cite[Proposition II.2.5(ii)]{fargues-scholze},
maps $X_S \to (\mathbb{A}^1_F)^{\lozenge}$ over $\Spd F$
are in bijection with maps $|S| \to F$.
Since $G//G$ is an affine variety,
the lemma follows.
\end{proof}

\begin{defn} \label{inertia stack def}
For any map of v-stacks $X \to S$, the \emph{inertia stack} $\In_S(X)$
is the fiber product $X \times_{X \times_S X} X$.

In the case $S=\Spd \Fpb$, we will shorten the notation to $\In(X)$.
\end{defn}
The stack $\In(\Bun_G)$ classifies $G$-bundles
equipped with an automorphism.
\begin{cor}
There is a map $\chi \colon \In(\Bun_G) \to \underline{(G//G)(F)}$ such that
for any $S \in \Perfb$, and any morphism $S \to \In(\Bun_G)$ inducing a
$G$-bundle $\EE$ over $X_S$ equipped with an automorphism $g \in \Aut \EE$,
the diagram
\[ \begin{tikzcd}
\uAut \EE \arrow[rd,"\chi_{\EE}"] \\
X_S \arrow[r] \arrow[u,"g"] \arrow[d] & G//G \\
{\underline{\abs{S}}} \arrow[r] \arrow[d] & \underline{(G//G)(F)} \arrow[u] \\
{\underline{|\In(\Bun_G)|}} \arrow[ur,"\abs{\chi}"]
\end{tikzcd} \]
commutes.
\end{cor}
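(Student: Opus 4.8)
The plan is to construct $\chi$ on test objects using the map $\chi_{\EE}$ built in Section~\ref{bundles general} together with Lemma~\ref{affine locally constant}, and then to check that this construction is functorial enough to descend to a morphism of v-stacks.

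First I would fix $S \in \Perfb$ and an $S$-point of $\In(\Bun_G)$: by Definition~\ref{inertia stack def} this amounts to a $G$-bundle $\EE$ on $X_S$ together with a global automorphism, i.e.\ a section $g \colon X_S \to \uAut \EE$. Composing with $\chi_{\EE} \colon \uAut \EE \to G//G$ gives a map $X_S \to G//G$, and Lemma~\ref{affine locally constant} turns this into a unique continuous map $|S| \to (G//G)(F)$ --- equivalently, an $S$-point of $\underline{(G//G)(F)}$ --- fitting into the upper square of the displayed diagram. This assignment is the candidate for $\chi$ on $S$-points.

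Next I would verify two compatibilities. For base change along $f \colon S' \to S$, the pullback of $(\EE,g)$ is $(f^*\EE, f^*g)$, and since $\chi_{\EE}$ is built from an \'etale-local trivialization of $\EE$ and the algebraic map $G \to G//G$ --- both compatible with base change --- the composite $\chi_{f^*\EE} \circ f^*g$ is the pullback of $\chi_{\EE} \circ g$ along $X_{S'} \to X_S$; the uniqueness clause of Lemma~\ref{affine locally constant} then forces the resulting map $|S'| \to (G//G)(F)$ to factor through $|S|$. For isomorphisms, an isomorphism $(\EE_1,g_1) \cong (\EE_2,g_2)$ induces an isomorphism $\uAut \EE_1 \cong \uAut \EE_2$ sending $g_1$ to $g_2$ and intertwining $\chi_{\EE_1}$ with $\chi_{\EE_2}$, conjugation-invariance being built into $G//G$, so the two test-object maps coincide. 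Since $\underline{(G//G)(F)}$ is a v-sheaf of sets, these two facts assemble into a morphism of v-stacks $\chi \colon \In(\Bun_G) \to \underline{(G//G)(F)}$.

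Commutativity of the whole diagram is then immediate: the triangle and the upper square are exactly the content of Lemma~\ref{affine locally constant} applied to $\chi_{\EE}\circ g$, and the lower triangle is the base-change compatibility just established, namely that the composite $|S| \to |\In(\Bun_G)| \xrightarrow{|\chi|} (G//G)(F)$ recovers the test-object map. The only point needing care --- rather than a genuine obstacle --- is this base-change compatibility of $\chi_{\EE}$, i.e.\ that the map constructed in Section~\ref{bundles general} from a choice of trivialization is natural in $Y = X_S$. As $\chi_{\EE}$ was defined precisely so as to be independent of the trivialization, this is routine, and the corollary essentially just packages that construction together with Lemma~\ref{affine locally constant}.
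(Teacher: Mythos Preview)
Your proposal is correct and is precisely the intended elaboration: the paper states this as an immediate corollary of Lemma~\ref{affine locally constant} and the construction of $\chi_{\EE}$, without spelling out a proof. Your write-up supplies exactly the functoriality checks (base change and isomorphism invariance) needed to promote the pointwise construction to a morphism of v-stacks into the set-valued sheaf $\underline{(G//G)(F)}$, which is the content left implicit in the paper.
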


Define $\In(\Bun_G)_{\sr} = \chi^{-1}(\underline{(G//G)(F)}_{\sr})$.

\begin{thm} \label{sr open}
For each $b \in B(G)$, $\In(\Bun_G^b)_{\sr}$ is open in $\In(\Bun_G)$.  In particular,
$\In(\Bun_G)_{\sr} = \bigsqcup_{b \in B(G)} \In(\Bun_G^b)_{\sr}$.
\end{thm}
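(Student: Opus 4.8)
The plan is to show that $\In(\Bun_G^b)_\sr$ is open inside $\In(\Bun_G^b)$, since the second assertion then follows: $\In(\Bun_G)_\sr$ is the union of the $\In(\Bun_G^b)_\sr$, which are pairwise disjoint because they are contained in the pairwise-disjoint locally closed strata $\In(\Bun_G^b) \subseteq \In(\Bun_G)$, and each is open in its stratum. For the main point, fix a point of $\In(\Bun_G^b)$ lying over the strongly regular locus, corresponding (after choosing an affinoid perfectoid chart $S$) to a $G$-bundle $\EE \cong \EE_{b,S}$ equipped with a strongly regular automorphism $g \in \Aut \EE$. I want to produce an open neighborhood of this point on which the automorphism stays strongly regular.

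The key is that strong regularity of $g$ is detected by $\chi_\EE$: by Lemma \ref{sr pullback}, the strongly regular locus of $G$ is the pullback of an open subscheme $(G//G)_\sr$, and by Lemma \ref{affine locally constant} the map $\chi_\EE \colon \uAut\EE \to G//G$ (composed with $g$ and descended) factors through a \emph{continuous} map $|S| \to (G//G)(F)$, hence the strongly regular condition on $g$ at a geometric point $x \in |S|$ only depends on the image point in $(G//G)(F)$, and the locus where this image lands in $(G//G)_\sr(F)$ is open in $|S|$. So the strongly regular locus is open in $\In(\Bun_G^b)$ \emph{as a subset}; what requires argument is that it is open as a substack, i.e.\ that the preimage under any map $T \to \In(\Bun_G^b)$ from a perfectoid space is open in $|T|$. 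This is immediate from the compatibility diagram in the Corollary preceding the theorem: the map $|T| \to |\In(\Bun_G)|$ is continuous, $|\chi|$ is continuous, and $\underline{(G//G)(F)}_\sr$ is an open subfunctor, so $\chi^{-1}(\underline{(G//G)(F)}_\sr)$ pulls back to an open subset of $|T|$. In other words, openness of $\In(\Bun_G)_\sr$ in $\In(\Bun_G)$ is built into its definition as $\chi^{-1}$ of an open; the real content of the theorem is that this open subset of $\In(\Bun_G)$, when intersected with the locally closed stratum $\In(\Bun_G^b)$, is already open in $\In(\Bun_G^b)$ rather than merely locally closed --- but since $\In(\Bun_G^b) \subseteq \In(\Bun_G)$ is by definition the locally closed substack over the HN-stratum $\Bun_G^b$, and $\In(\Bun_G)_\sr$ is open in $\In(\Bun_G)$, the intersection is automatically open in $\In(\Bun_G^b)$.

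That last reduction, however, is too cheap to be what the theorem is really asserting, so I expect the actual obstacle lies elsewhere: namely, one wants openness of $\In(\Bun_G^b)_\sr$ \emph{inside the inertia stack restricted to a neighborhood of $\Bun_G^b$ in $\Bun_G$}, i.e.\ that a strongly regular automorphism of $\EE_{b}$ cannot be perturbed into a non-basic-or-smaller stratum while a corresponding automorphism degenerates. The mechanism for this should be Proposition \ref{bundle centralizer}: since $g$ is strongly regular, its centralizer $\widetilde{\EE} = \uAut_{\EE}(g)$ is a torus bundle, and the structure theory identifies it (over a cover) with a torus $H$ over $F$; then part (2) recovers $\EE$ itself from the $H$-bundle $\widetilde{\EE}$, and the Fargues--Scholze classification of $H$-bundles for a torus $H$ (the HN-stratification of $\Bun_H$ being by the single invariant $B(H)$, which is discrete) forces the isomorphism type of $\widetilde{\EE}$, hence of $\EE$, to be locally constant. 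Thus near our point, nearby bundles-with-strongly-regular-automorphism all have centralizer-torus of the same type and the underlying bundle stays in $\Bun_G^b$; this is what pins down $\In(\Bun_G^b)_\sr$ as open. The hard part will be making the "$H$-bundles for a torus have discrete moduli" step interact correctly with the family over $S$ --- i.e.\ upgrading the geometric-pointwise constancy of the reduction type to an honest openness statement on $|S|$, using the fact (Fargues--Scholze) that the map $B(H) \to \pi_0(\Bun_H)$ is a bijection and the strata are open and closed for $H$ a torus, combined with Corollary \ref{sr etale locally conj} to control how the torus $\widetilde{\EE}$ and its embedding data vary.
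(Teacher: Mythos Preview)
Your third paragraph lands on the right strategy, and it matches the paper's: reduce the bundle-with-strongly-regular-automorphism to a torus bundle via Proposition~\ref{bundle centralizer}, then use the discreteness of $|\Bun_T|$ to conclude that the underlying $G$-bundle type is locally constant on $|S|$. You also correctly diagnose that the literal statement (openness in $\In(\Bun_G^b)$) is a formality and that the real content is openness in $\In(\Bun_G)$, as in the introduction.

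The gap is the step you summarize as ``structure theory identifies [the centralizer] (over a cover) with a torus $H$ over $F$.'' Proposition~\ref{bundle centralizer} requires as input a reductive group $H$ over $F$ together with a \emph{trivialization} $H \times_F Y \cong \widetilde{\EE}$ of the centralizer. You only have the centralizer of $g$ inside $\uAut\EE$, which a priori is a torus group scheme over $X_S$ with no evident reason to be constant in families, and you have not said which $H/F$ to take. The paper supplies this by constructing a reference automorphism: at the chosen point $z$, project $g_z$ along $\Gbt \to \Gbu$ to obtain $t \in G_b(F)$ with the same image in $(G//G)(F)$ (Lemma~\ref{factor through Gb}); set $T = Z_{G_b}(t)$; then use Lemma~\ref{local homeo} --- the map $T(F)_{\sr} \to (G//G)(F)_{\sr}$ is a local homeomorphism, by Chevalley restriction plus the $p$-adic inverse function theorem --- to lift the continuous map $|S| \to (G//G)(F)_{\sr}$ to a map $|S| \to T(F)_{\sr}$ after shrinking $S$. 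This lift defines an automorphism $g'$ of the reference bundle $\EE_{b,S}$ whose centralizer is tautologically the constant torus $T$. Now Corollary~\ref{sr etale locally conj} gives that $g$ and $g'$ are \'etale locally conjugate, Proposition~\ref{bundle centralizer} applies with $H = T$, and the argument finishes exactly as you outlined. Without this lifting step you cannot pin down a fixed $T$, and the factorization through $\Bun_T$ does not go through.
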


The strategy of the proof will be to show that if $\EE$ is a $G$-bundle on $X_S$
admitting a strongly regular automorphism, then locally on $\abs{S}$, $\EE$ can be obtained
from the construction of Proposition \ref{bundle centralizer}, with $H$ a torus.
If $H$ is a torus, then $\abs{\Bun_H}$ is discrete.

Any $b \in B(G)$ induces a morphism $\Spd \Fpb \to \Bun_G$.  The group $\Gbt$ can be identified with the fiber product
$\Spd \Fpb \times_{\Bun_G} \In(\Bun_G)$.  Let $\chi_{\Gbt}$ denote the composite $\Gbt \to \In(\Bun_G) \to \underline{(G//G)(F)}$.
One can also define a map
$\chi_{G_b} \colon G_b \to (G//G)$
as in Example \ref{isom example}.

\begin{lem}[{\cite[Proposition III.5.1]{fargues-scholze}}] \label{Gbt structure}
The group $\Gbt$ has a slope filtration $\{ \Gbt^{\ge \lambda} \}$
such that
\[ \Gbt \cong \underline{G_b(F)} \ltimes \Gbt^{>0} \,, \]
and for each
$\lambda > 0$, $\Gbt^{\ge \lambda} / \Gbt^{>\lambda}$ is isomorphic to the Banach--Colmez space
associated with the slope $-\lambda$ part of the isocrystal $(\Lie G \otimes_F \breve{F},\operatorname{Ad}(b) \sigma
)$.
\end{lem}

\begin{lem} \label{factor through Gb}
The diagram
\[ \begin{tikzcd}[column sep=huge]
\Gbt \arrow[r] \arrow[rd,"\chi_{\Gbt}"] & \underline{G_b(F)} \arrow[r] \arrow[d,"\chi_{G_b}"] &
\Gbt \arrow[ld,"\chi_{\Gbt}"] \\
& \underline{(G//G)(F)}
\end{tikzcd} \]
commutes.
\end{lem}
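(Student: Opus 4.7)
The plan is to check the two triangles separately, in both cases by unwinding $\chi_{\Gbt}$ via the natural description of $\EE_{b,S}$ as the descent of the trivial $G$-bundle on $Y_S$ along the $b$-twisted Frobenius. In that description an automorphism of $\EE_{b,S}$ is a morphism $f\colon Y_S \to G$ satisfying $f\circ\Frob = b\,\sigma(f)\,b^{-1}$, and $\chi_{\Gbt}$ is obtained by post-composing with $G \to G//G$ (which descends to $X_S$ because inner automorphisms are trivial on $G//G$) and then applying Lemma \ref{affine locally constant}.

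For the right triangle I would observe that $g \in G_b(F) \hookrightarrow \Gbt(S)$ corresponds to the constant function $f\equiv g \in G(\breve{F}) \subseteq G(W_{\OO_F}(R^+)[1/p])$; the cocycle condition becomes $b\sigma(g)b^{-1}=g$, which is exactly the defining condition for $G_b(F)$, and the resulting $G(\breve{F})$-conjugacy class of $g$ is $\sigma$-invariant, lies in $(G//G)(F)$, and equals $\chi_{G_b}(g)$ by the construction of Example \ref{isom example}.

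For the left triangle I must show that $\chi_{\Gbt}$ factors through the retraction $\pi\colon \Gbt \to \Gbu$. Here the strategy is to exploit that $\Gbt$ is an extension of the constant v-sheaf $\Gbu$ by an iterated extension of positive-slope Banach--Colmez spaces, arising from the summands $\Hom(\EE_\mu,\EE_\lambda)$ with $\lambda > \mu$ in the slope decomposition of $\EE_b$. Since every morphism from such a Banach--Colmez space to a constant v-sheaf is constant, any morphism $\Gbt \to \underline{T}$ to a constant target automatically factors through $\pi$; combined with the right triangle, the factored map is forced to be $\chi_{G_b}$. This v-sheaf connectedness for $\ker(\pi)$ will be the main technical obstacle: I plan either to invoke the explicit Banach--Colmez description of $\Aut \EE_b$ from \cite{fargues-scholze}, or to argue pointwise in a local trivialization adapted to the slope filtration, where $\alpha$ becomes block upper triangular with diagonal part $\pi(\alpha)$ and so has the same characteristic polynomial in every representation of $G$, hence the same image in $G//G$.
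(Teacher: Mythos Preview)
Your proposal is correct. For the right triangle both you and the paper simply unwind the definitions. For the left triangle, the paper gives a two-line argument: the composite $\Gbt \to \Gbu \to \Gbt$ is a limit of conjugation maps, and $\chi_{\Gbt}$ is conjugation-invariant since it lands in $G//G$; hence $\chi_{\Gbt}(\alpha) = \chi_{\Gbt}(\pi(\alpha))$, and combined with the right triangle this yields the left one. This is precisely your second option---a block upper triangular $\alpha$ and its block-diagonal part $\pi(\alpha)$ have the same image in $G//G$---phrased more abstractly. Your first option, factoring through $\pi$ by invoking that positive-slope Banach--Colmez spaces admit no nonconstant maps to totally disconnected v-sheaves, is a valid but heavier alternative. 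One organizational remark: your second option already proves the left triangle outright and does not pass through any connectedness statement for $\ker\pi$, so it is better presented as an independent argument rather than as a means of establishing that connectedness.
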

\begin{proof}
It is clear from the definitions that the triangle on the right commutes.  To see that the large triangle also commutes, observe that
the composite $\Gbt \to \underline{G_b(F)} \to \Gbt$ is
a limit of conjugation maps, and $\chi_{\Gbt}$ is conjugation
invariant.
\end{proof}

\begin{lem} \label{local homeo}
Let $T$ be a maximal torus of $G_b$, and let $T_{\sr}$
denote the preimage of $(G//G)_{\sr}$ under the map $T \to G//G$ induced
by $\chi_{G_b}$.
Then the induced map
$T(F)_{\sr} \to (G//G)(F)_{\sr}$ is a local homeomorphism.
\end{lem}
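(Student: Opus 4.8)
\emph{Proof proposal.} The plan is to realize the map in question as $f(F)$, where $f \colon T_{\sr} \to (G//G)_{\sr}$ is the morphism of $F$-schemes induced by $\chi_{G_b}$ (note that $T(F)_{\sr}$ is the set of $F$-points of the open subscheme $T_{\sr} \subseteq T$, and $(G//G)(F)_{\sr}$ that of $(G//G)_{\sr}$, each with its $p$-adic topology), and to show that $f$ is an \'etale morphism of \emph{smooth} $F$-varieties. The non-archimedean inverse function theorem then shows that $f$ induces a local homeomorphism on $F$-points, which is exactly the assertion. I note that a local homeomorphism need be neither surjective nor injective: if $T$ is anisotropic in $G_b$ the image can be a proper open subset of $(G//G)(F)_{\sr}$, and that is permitted.

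To prove that $f$ is \'etale, I would first observe that $\chi_{G_b}$ is invariant under $G_b$-conjugation by construction (compare Lemma \ref{factor through Gb}), hence factors through a morphism $G_b//G_b \to G//G$; thus $f$ is the composite $T_{\sr} \to (G_b//G_b)_{\sr} \to (G//G)_{\sr}$. Choose a finite separable extension $K/F$ splitting both $G$ and $G_b$, and write $T'$ for the maximal torus $T_K$. Since $G_b$ is an inner form of a Levi subgroup $M_b$ of the quasisplit inner form $G^*$ of $G$, over $K$ the torus $T'$ is a maximal torus of $G_K$; unwinding the construction of $\chi_{G_b}$ and combining Chevalley restriction for $M_b$ (so that $(G_b//G_b)_K \cong T'//W_{M_b}$) with Chevalley restriction for $G$, one finds that $f_K$ is identified with the Chevalley quotient map $T' \to T'//W = (G//G)_K$, where $W$ is the absolute Weyl group of $G$. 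By the proof of Lemma \ref{sr pullback}, $T'_{\sr}$ is precisely the locus in $T'$ on which $W$ acts freely; since we are in characteristic zero, $T'_{\sr} \to T'_{\sr}//W = (G//G)_{K,\sr}$ is therefore a $W$-torsor, in particular finite \'etale, and $(G//G)_{K,\sr}$ is smooth over $K$ (while $T_{\sr}$ is visibly smooth over $F$). As \'etaleness and smoothness of a finite-type morphism can be checked after the faithfully flat base change $\Spec K \to \Spec F$, we conclude that $f$ is \'etale and that $T_{\sr}$ and $(G//G)_{\sr}$ are smooth $F$-varieties.

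It remains to pass to $F$-points: an \'etale morphism between smooth varieties over the local field $F$ induces a local homeomorphism on $F$-points for the $p$-adic topologies, since near any $F$-point it is a local isomorphism of $F$-analytic manifolds by the inverse function theorem (equivalently, one reduces Zariski-locally to standard \'etale form $\Spec A[t]/(g) \to \Spec A$ with $g$ monic and $g'$ invertible, and applies Hensel's lemma). The step I expect to be the main obstacle is the identification, after extending scalars to $K$, of $f_K$ with the Chevalley quotient $T' \to T'//W$: this requires unwinding the definition of $\chi_{G_b}$ and using in an essential way that $G_b$ is an inner form of a Levi subgroup $M_b$ of $G^*$ (so that maximal tori of $G_b$ become maximal tori of $G$ after base change, and the induced map on coarse quotients is the canonical one $M_b//M_b \to G^*//G^* = G//G$ attached to the Levi inclusion). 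Everything else — descent of \'etaleness and smoothness, the description of the strongly regular locus of $T'$ as the free locus from Lemma \ref{sr pullback}, and the $p$-adic inverse function theorem — is either routine or standard.
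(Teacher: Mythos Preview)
Your proposal is correct and follows the same approach as the paper: show that $T_{\sr} \to (G//G)_{\sr}$ is \'etale (via Chevalley restriction, after base change to a splitting field where $T$ becomes a maximal torus of $G$ and the map becomes the quotient by the Weyl group on its free locus), and then apply the $p$-adic inverse function theorem. The paper's proof is simply a terse two-line version of yours, invoking ``Chevalley restriction'' for the \'etaleness and citing Serre for the inverse function theorem; the unwinding you flag as the main obstacle is exactly what the paper leaves implicit.
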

\begin{proof}
By Chevalley restriction, the map $T_{\sr} \to (G//G)_{\sr}$ is \'etale.
Then the lemma follows from the inverse function theorem for locally $F$-analytic manifolds \cite[\S II.III.9]{serre-lie-algebras}.
\end{proof}

\begin{proof}[Proof of Theorem \ref{sr open}]
Let $S$ be a perfectoid space, and let $\EE$ be a $G$-bundle over
$X_S$ equipped with a strongly regular endomorphism $g$.
It induces a map $S \to \In(\Bun_G)$.
Let $b \in B(G)$, and let $z = \Spa (K,K^+)$ be a point of $S$ whose image
in $\Bun_G$ is contained in $\Bun_G^b$.
We need to show that some open neighborhood of $z$ also maps to $\Bun_G^b$.

Let $t$ be the image of $g_z$ under the map $\Gbt(K) \to G_b(F)$. 
By Lemma \ref{factor through Gb}, $g_z$ and $t$ determine the same
element of $(G//G)(F)$.
Let $T$ be the centralizer of $t$ in $G_b$.
By Lemma \ref{local homeo}, after replacing $S$ with an open neighborhood
of $z$, we can find a map $\abs{S} \to T(F)_{\sr}$ such that the composite
$\abs{S} \to T(F)_{\sr} \xrightarrow{\chi_{G_b}} (G//G)(F)_{\sr}$ agrees
with the map $\abs{S} \to (G//G)(F)_{\sr}$ induced by $g$.
Let $g'$ the endomorphism of the bundle $\EE_{b,S}$ induced by this map.
By Corollary \ref{sr etale locally conj}, $g$ and $g'$ are \'etale locally conjugate.
By Proposition \ref{bundle centralizer}, we can find a $T$-bundle
$\widetilde{\EE}$ such that $\EE = (\widetilde{\EE} \times \EE_{b,S}) / T$.  Then the map
$S \to \Bun_G$ factors through $\Bun_T$.  Since $\abs{\Bun_T}$ is discrete,
the map $\abs{S} \to \abs{\Bun_G}$ must be locally constant.

\end{proof}

\section{Characteristic classes} \label{char class section}

In this section, we consider the theory of characteristic classes, both in
general and on $\Bun_G$.  We are interested in characteristic classes because
of their connection to trace distributions of representations of $G(F)$.
This connection will be recalled in
section \ref{trace dist section}.
In section \ref{BunG dist}, we will relate characteristic classes
on $\Bun_G$ to distributions on the groups $G_b(F)$ for $b \in B(G)$
(including non-basic $b$).
Sections \ref{BC cohomology} and \ref{cc smooth}
develop some techniques for computing characteristic
classes on $\Bun_G$, and more generally for computing
the behavior of characteristic classes under cohomologically smooth pullback.

From now on, we assume that $F$ is a finite extension of $\Qp$.

\subsection{Definition of characteristic classes}

First, we recall the definition of characteristic classes from \cite[\S 4.3]{hkw}.
In the interest of space, we avoid introducing categories
of cohomological correspondences, and give all definitions directly in terms
of the category $D_{\et}(X,\Lambda)$ defined in \cite[Definition 14.13]{diamonds}.

Let $f \colon X \to S$ be a fine map of decent v-stacks, as defined in \cite[Definitions 1.1 and 1.3]{ghw}.
Let $\ell$ be a prime different from $p$, and
let $\Lambda$ be a $\Zl$-algebra that is killed by a power of $\ell$.
Let $K_{X/S}$ denote the dualizing complex of $f$.

For any object $A$ of $D_{\et}(X,\Lambda)$, let $A^{\vee} = \uRHom(A,K_{X/S})$
denote its dual.  There is a natural morphism
\[ \ev \colon A^{\vee} \otimes A \to K_{X/S} \,, \]
called the \emph{evaluation map}.

Let $\pr_1, \pr_2 \colon X \times_S X \to X$ denote the projection
maps, and let $\Delta \colon X \to X \times_S X$ denote the diagonal.
There is a natural morphism
\begin{equation} \label{dualizable map}
A^{\vee} \boxtimes A \to \uRHom(\pr_1^* A, \pr_2^! A) \,,
\end{equation}
defined as follows.  By the adjunction between $\otimes$ and $\uRHom$,
specifying such a morphism is equivalent to specifying a morphism
$(A^{\vee} \otimes A) \boxtimes A \to \pr_2^! A$,
which is in turn equivalent to specifying a morphism
$\pr_1^* (A^{\vee} \otimes A) \to \uRHom(\pr_2^* A, \pr_2^! A)$.
We take the composite
\[ \pr_1^* (A^{\vee} \otimes A) \xrightarrow{\pr_1^* \ev} \pr_1^* K_{X/S} \to \pr_2^! \Lambda \to \pr_2^! \uRHom(A,A) \xrightarrow{\sim} \uRHom(\pr_2^* A,\pr_2^! A) \,. \]
Here, the second arrow is base change.

\begin{defn}
We say that $A$ is \emph{dualizable} over $S$ if \eqref{dualizable map}
is an isomorphism (cf.~\cite[Proposition 4.3.5]{hkw}).
\end{defn}
If $A$ is dualizable, we define the
\emph{coevaluation map} $\coev \colon \Lambda \to \Delta^! (A^{\vee} \boxtimes A)$ as the composite
\[ \Lambda \to \uRHom(A,A) \isom \uRHom(\Delta^* \pr_1^* A,\Delta^! \pr_2^! A) \isom \Delta^! \uRHom(\pr_1^* A,\pr_2^! A) \isom \Delta^! (A^{\vee} \otimes A) \,, \]
where the last isomorphism is induced by the inverse of \eqref{dualizable map}.

Let $\In_S(X)$ be the inertia stack of $X$, as in Definition \ref{inertia stack def}, and let $\pi_1, \pi_2 \colon \In_S(X) \to X$ denote
the projection maps.

\begin{defn}[{\cite[Definition 4.3.6]{hkw}}]
Let $A \in D_{\et}(X,\Lambda)$ be an object that is dualizable
over $S$.  The \emph{characteristic class} of $A$ over $S$, denoted
$\cc_{X/S}(A)$, is the element of $H^0(\In_S(X),K_{\In_S(X)/S})$
determined by the composite
\[ \Lambda \xrightarrow{\pi_1^* \coev} \pi_1^* \Delta^! (A^{\vee} \boxtimes A) \to \pi_2^! \Delta^* (A^{\vee} \boxtimes A) \xrightarrow{\pi_2^! \ev} K_{\In_S(X)/S} \,, \]
where the second arrow is base change.
\end{defn}

Characteristic classes behave nicely under pushforward by a proper map and pullback by
an open immersion, as the following two propositions demonstrate.

\begin{prop}[{\cite[Lemma 4.3.7]{hkw}}] \label{cc pull}
Let $i \colon U \to X$ be an open immersion of decent v-stacks
fine over $S$.  Then $\In_S(i) \colon \In_S(U) \to \In_S(X)$ is also
an open immersion.  If $A \in D(X,\Lambda)$ is dualizable over $S$, then so
is $i^*A$, and
\[ \cc_{U/S}(i^*A) = \In_S(i)^* \cc_{X/S}(A) \,. \]
\end{prop}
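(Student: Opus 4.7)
The plan is to exploit the fact that, for an open immersion $i$, the functor $i^*$ agrees with $i^!$ and commutes with the formation of $\uRHom$. Every ingredient in the definition of $\cc_{X/S}(A)$ will then pull back cleanly along $\In_S(i)$ to yield the corresponding ingredient for $\cc_{U/S}(i^* A)$, so the proof reduces to a naturality check rather than a six-functor calculation.

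First I would show that $\In_S(i)$ is an open immersion. An $S'$-point of $\In_S(X)$ consists of a map $x \colon S' \to X$ together with an $S$-automorphism of $x$, and automorphisms are insensitive to restriction along an open immersion. Hence an $S'$-point of $\In_S(X)$ factors through $\In_S(U)$ exactly when $x$ factors through $U$, giving $\In_S(U) \cong \In_S(X) \times_X U$ (using either projection $\pi_j$) and exhibiting $\In_S(i)$ as the base change of the open immersion $i$.

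Next I would prove that $i^*A$ is dualizable. Since $i$ is an open immersion, $i^! \cong i^*$, and hence $K_{U/S} \cong i^* K_{X/S}$; combined with the canonical isomorphism $i^* \uRHom(A,B) \isom \uRHom(i^*A, i^*B)$ (also valid because $i$ is étale), this gives $(i^*A)^\vee \cong i^*(A^\vee)$. The external product commutes with pullback along $i \times_S i$, which is itself the open immersion $U \times_S U \hookrightarrow X \times_S X$. Under these identifications, the morphism \eqref{dualizable map} for $i^*A$ over $U$ becomes the restriction of \eqref{dualizable map} for $A$ over $X$ to the open substack $U \times_S U$, and $*$-pullback preserves isomorphisms, so dualizability descends.

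Finally, the identity $\cc_{U/S}(i^*A) = \In_S(i)^* \cc_{X/S}(A)$ is a naturality check: applying $\In_S(i)^*$ to the composite defining $\cc_{X/S}(A)$, the evaluation, coevaluation, and base-change maps all commute with open-immersion pullback, producing exactly the composite defining $\cc_{U/S}(i^*A)$. The only real obstacle is the bookkeeping required to track these natural transformations through the composite, but because all of the relevant comparison maps between $i^*$, $i^!$, $\uRHom$, and $\boxtimes$ are isomorphisms for open immersions, no genuinely new input is needed.
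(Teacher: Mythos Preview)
Your proposal is correct. The paper does not give its own proof of this proposition; it simply records the statement and attributes it to \cite[Lemma~4.3.7]{hkw}. Your direct argument---identifying $\In_S(U)$ as $\In_S(X)\times_X U$ via the functor of points, using $i^! \cong i^*$ for open immersions to transport dualizability, and then checking naturality of the evaluation, coevaluation, and base-change maps under $*$-pullback---is exactly the kind of verification one would expect to find in the cited reference, and there is no gap in it.
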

\begin{prop}[{\cite[Corollary 4.3.9]{hkw}}] \label{cc push}
Let $p \colon X \to Y$ be an proper morphism of decent v-stacks
fine over $S$.  Then $\In_S(p) \colon \In_S(X) \to \In_S(Y)$ is also
proper.  If $A \in D(X,\Lambda)$ is dualizable over $S$, then so is $p_! A$,
and
\[ \cc_{X/S}(p_! A) = \In_S(i)_! \cc_{Y/S}(A) \,. \]
\end{prop}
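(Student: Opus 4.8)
The plan is to prove this directly in $D_{\et}$, in keeping with the conventions of this section, by regarding the statement as the ``identity-correspondence'' instance of the functoriality of characteristic classes under proper pushforward; this is \cite[Corollary 4.3.9]{hkw}, and the displayed identity should be read as $\cc_{Y/S}(p_! A) = \In_S(p)_! \cc_{X/S}(A)$. First I would dispose of the claim that $\In_S(p)$ is proper. Recall $\In_S(X) = X \times_{X \times_S X} X$, and that $\In_S(p)$ is the morphism induced by $p$ on each factor; since properness of morphisms of decent v-stacks is stable under base change and composition, one deduces properness of $\In_S(p)$ from that of $p \times_S p \colon X \times_S X \to Y \times_S Y$ by writing the inertia-stack construction as an iteration of such operations. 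Once $\In_S(p)$ is known to be proper, the equality $\In_S(p)^! K_{\In_S(Y)/S} = K_{\In_S(X)/S}$ (the structure map of $\In_S(X)$ to $S$ factors through $\In_S(Y)$) together with the counit of $(\In_S(p)_!, \In_S(p)^!)$ produces the trace map $\In_S(p)_! \colon H^0(\In_S(X), K_{\In_S(X)/S}) \to H^0(\In_S(Y), K_{\In_S(Y)/S})$ appearing in the statement.

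The technical core is to relate the duality data of $p_! A$ to that of $A$. Because $p$ is proper, $p_! = p_*$, and because $X \to S$ factors through $p$ we have $p^! K_{Y/S} = K_{X/S}$; the internal-Hom form of the $(p_!, p^!)$-adjunction then yields $(p_! A)^\vee = \uRHom(p_! A, K_{Y/S}) \cong p_* \uRHom(A, p^! K_{Y/S}) = p_!(A^\vee)$. I would then establish, as a lemma, that $p_! A$ is again dualizable over $S$ and that, after the Künneth identification $(p_! A)^\vee \boxtimes p_! A \cong (p \times_S p)_!(A^\vee \boxtimes A)$, the evaluation map $\ev_{p_! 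A}$ is the $p_!$-pushforward of $\ev_A$ while the coevaluation map $\coev_{p_! A}$ is the $(p \times_S p)_!$-pushforward of $\coev_A$. Both points are proved by unwinding the definitions of $\ev$ and $\coev$ (composites of adjunction units and counits, base change maps, and the evaluation $A^\vee \otimes A \to K_{X/S}$) and feeding in proper base change, the projection formula, and the Künneth isomorphism; the dualizability can alternatively be read off from \cite[Proposition 4.3.5]{hkw}.

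Granting this lemma, the identity would follow by a diagram chase: apply $\In_S(p)_! = \In_S(p)_*$ to the composite $\Lambda \xrightarrow{\pi_1^* \coev} \pi_1^* \Delta^!(A^\vee \boxtimes A) \to \pi_2^! \Delta^*(A^\vee \boxtimes A) \xrightarrow{\pi_2^! \ev} K_{\In_S(X)/S}$ defining $\cc_{X/S}(A)$, and use that $\In_S(p)$ lies over $p$, together with the compatibilities recorded above, to identify the pushed-forward composite with the composite $\Lambda \xrightarrow{\pi_1^* \coev_{p_!A}} \pi_1^* \Delta^!((p_!A)^\vee \boxtimes p_!A) \to \pi_2^! \Delta^*((p_!A)^\vee \boxtimes p_!A) \xrightarrow{\pi_2^! \ev_{p_!A}} K_{\In_S(Y)/S}$ defining $\cc_{Y/S}(p_! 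A)$ (the inner base-change arrows corresponding, the outer $\coev$ and $\ev$ terms rewritten via the lemma). Alternatively, one could set up the category of cohomological correspondences as in \cite{hkw} and invoke its general functoriality. The hard part, on either route, is the lemma of the second paragraph, and more precisely the $2$-categorical coherence behind it: verifying that the duality isomorphism $(p_! A)^\vee \cong p_!(A^\vee)$, the Künneth isomorphism, and the relevant proper base change isomorphisms are mutually compatible, so that the resulting diagram of natural transformations among the six operations commutes strictly. Packaging exactly this bookkeeping is the point of the correspondence-category formalism of \cite{hkw}; done by hand in the present setting it is routine but long, which is why one normally just cites \cite[Corollary 4.3.9]{hkw}.
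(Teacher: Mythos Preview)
Your proposal is correct and is essentially the argument behind \cite[Corollary 4.3.9]{hkw}; the paper itself gives no proof here and simply cites that reference. You also correctly note that the displayed identity in the statement contains typos and should read $\cc_{Y/S}(p_! A) = \In_S(p)_! \cc_{X/S}(A)$.
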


In section \ref{cc smooth}, we will show that the behavior of characteristic classes
under cohomologically smooth pullback can also sometimes be understood explicitly.

\subsection{Geometrization of trace distributions}
\label{trace dist section}

Let $\Lambda$ denote an arbitrary $\Zl$-algebra (not necessarily killed by a power of $\ell$).
Let $C_c(G_b(F),\Lambda)$ denote
the space of locally constant $\Lambda$-valued functions on $G_b(F)$, and let
\[ \Haar(G_b(F),\Lambda) = \Hom_{\Lambda[G_b(F)]}(C_c(G_b(F),\Lambda),\Lambda) \] denote
the space of $\Lambda$-valued Haar measures on $G_b(F)$.
Define
\begin{equation} \label{dist def}
\HDist(G_b(F),\Lambda) = \Hom_{\Lambda}(C_c(G_b(F),\Lambda) \otimes_{\Lambda} \Haar(G_b(F),\Lambda),\Lambda) \,.
\end{equation}

A $\Lambda$-representation $\rho$ of $G_b(F)$ is said to be smooth if the stabilizer of any vector
is open.
A smooth representation $\rho$ is said to be admissible if for every compact open pro-$p$ subgroup
$K \subset G_b(F)$, the derived $K$-invariants of $\rho$ form a perfect complex of $\Lambda$-modules.
Any admissible smooth representation $\rho$ determines a trace distribution
$\trdist(\rho) \in \HDist(G,\Lambda)$.
For any $f \in C_c(G_b(F),\Lambda)$,
the operator $\int_{g \in G_b(F)} f(g) \rho(g)$ has image contained in a finitely presented $\Lambda$-module,
and $\trdist(\rho)(f)$ is defined to be its trace.

If in addition $\Lambda$ is isomorphic to $\mathbb{C}$ and $\rho$ has finite length, then $\trdist(\rho)$ has a
Harish-Chandra character $\Theta_{\rho}$ \cite[Theorem 16.3]{hc-admissible}.
Here, $\Theta_{\rho}$ is a locally constant function from the regular semisimple locus $G_b(F)_{\rs}$ of $G_b(F)$ to $\Lambda$
satisfying
\[ (\trdist \rho)(f) = \int_{g \in G_b(F)_{\rs}} \Theta_{\rho}(g) f(g) \]
for all $f \in C_c(G_b(F),\Lambda) \otimes_{\Lambda} \Haar(G_b(F),\Lambda)$.
(Since $G_b(F)_{\rs}$ is not compact, we choose a isomorphism $\Qlb \cong \mathbb{C}$ to make sense of the integral.
However, since $\Theta_{\rho}$ is uniquely determined by
the values of the integrals for which $f$ has support in a compact subset of $G_b(F)_{\rs}$,
$\Theta_{\rho}$ does not depend on the choice of isomorphism.)
Since the strongly regular locus is dense in the regular semisimple locus, there is no harm
in integrating over $G_b(F)_{\sr}$ instead of $G_b(F)_{\rs}$.

The trace distribution can be reinterpreted in terms of characteristic classes.
Let $D(G_b(F),\Lambda)$ denote the derived category of the category
of $\Lambda$-representations of $G_b(F)$.
For any small v-stack $X$,
Fargues--Scholze define a category $D_{\lis}(X,\Lambda)$
\cite[VII.6.1]{fargues-scholze}.
If $\Lambda$ is killed by a power of $\ell$, then $D_{\lis}(X,\Lambda)$
is equivalent to $D_{\et}(X,\Lambda)$ by \cite[Proposition VII.6.6]{fargues-scholze}.

Until the end of section \ref{BC cohomology},
let $S$ be either $\Spd \Fpb$, or $\Spd C$ for
some algebraically closed nonarchimedean field $C$ of characteristic $p$.
\begin{prop}[{\cite[Proposition VII.7.1]{fargues-scholze}}] \label{rep sheaf equivalence}
There are equivalences
\[D(G_b(F),\Lambda) \cong D_{\lis}([S/G_b(F)],\Lambda) \cong D_{\lis}([S/\Gbt],\Lambda) \,. \]
\end{prop}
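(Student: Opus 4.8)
This is \cite[Proposition VII.7.1]{fargues-scholze}; the plan is to prove the two equivalences separately, the second being the substantive one. For $D(G_b(F),\Lambda) \cong D_{\lis}([S/\Gbu],\Lambda)$ I would unwind the definition of $D_{\lis}$ from \cite[\S VII.6]{fargues-scholze}. Since $S$ is $\Spd C$ or $\Spd\Fpb$, it is a ``point'': $D_{\lis}(S,\Lambda)$ is the derived category of $\Lambda$-modules. The projection $S \to [S/\Gbu]$ is a $\Gbu$-torsor, hence a v-cover whose \v{C}ech nerve is the bar construction $S \times \Gbu^{\times\bullet}$, so by v-descent for $D_{\lis}$ the target is the totalization of $[n] \mapsto D_{\lis}(S \times \Gbu^{\times n},\Lambda)$. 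Because $\Gbu$ is the v-sheaf attached to the locally profinite group $G_b(F)$, lisse $\Lambda$-sheaves on $S \times \Gbu^{\times n}$ are exactly the ``continuous'' families of $\Lambda$-complexes over $G_b(F)^{\times n}$, so the totalization computes the derived category of the abelian category of \emph{smooth} $\Lambda[G_b(F)]$-modules; the one point requiring care is that this totalization carries a $t$-structure with that heart and coincides with the honest derived category of it, which is part of \cite[\S VII.6--7]{fargues-scholze}.

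For $D_{\lis}([S/\Gbu],\Lambda) \cong D_{\lis}([S/\Gbt])$ I would use the structure of the group diamond $\Gbt = \uAut \EE_{b,S}$. It sits in a split extension
\[ 1 \longrightarrow \Gbt^{>0} \longrightarrow \Gbt \longrightarrow \Gbu \longrightarrow 1, \]
split by the section $\Gbu \to \Gbt$ of \S\ref{ff}, in which $\Gbt^{>0}$ is a ``unipotent'' group diamond: an iterated extension of positive Banach--Colmez spaces $\BC(\EE_\lambda)$, where $\EE_\lambda$ runs over the slope-$\lambda$ summands of $\ad\EE_b$ with $\lambda > 0$. (Establishing this uses the slope decomposition of $\ad \EE_b$ on $X_S$, the vanishing of $H^1(X_S,\EE_\lambda)$ for $\lambda>0$, and the vanishing of $H^0$ of the negative-slope summands.) Writing $q\colon [S/\Gbu] \to [S/\Gbt]$ for the map induced by the section and $p\colon [S/\Gbt] \to [S/\Gbu]$ for the one induced by the projection, we have $p \circ q = \id$, while $q \circ p$ is $\mathbb{A}^1$-homotopic to the identity through the scaling contraction of $\Gbt^{>0}$ onto a point. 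Since each $\BC(\EE_\lambda) \to \ast$ with $\lambda > 0$ is cohomologically smooth and satisfies $Rf_*\Lambda \cong \Lambda$ on lisse sheaves, the same holds for $\Gbt^{>0}$, hence $(q\circ p)^* \cong \id$, and therefore $q^*$ and $p^*$ are mutually quasi-inverse equivalences.

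The step I expect to be the main obstacle is precisely this cohomological input on Banach--Colmez spaces: one needs that the \'etale cohomology of $\BC(\EE_\lambda)$ for $\lambda > 0$ is trivial --- these are the ``affine-space-like'' diamonds --- and one needs to propagate this through the inverse limit defining $D_{\lis}$ for coefficient rings $\Lambda$ that need not be $\ell$-power torsion. For torsion $\Lambda$ this is the cohomology computation of \cite{fargues-scholze}; the passage to general $\Lambda$, together with the verification that $\Gbt^{>0}$ really is such an iterated extension (which rests on the classification of vector bundles on the Fargues--Fontaine curve), are the remaining technical points. Granting these, the first equivalence is essentially formal and the second follows as sketched.
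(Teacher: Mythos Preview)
The paper does not prove this proposition; it simply records it as a citation of \cite[Proposition VII.7.1]{fargues-scholze}. Your sketch therefore goes beyond what the paper does, and it is a reasonable outline of the argument in \cite{fargues-scholze}: the first equivalence is v-descent along $S \to [S/\Gbu]$, and the second uses the semidirect product decomposition $\Gbt \cong \Gbu \ltimes \Gbt^{>0}$ together with the cohomological triviality of the unipotent radical.

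One point deserves correction. Your packaging of the second equivalence via an ``$\mathbb{A}^1$-homotopy through the scaling contraction of $\Gbt^{>0}$'' does not obviously work: the graded pieces $\Gbt^{\ge\lambda}/\Gbt^{>\lambda}$ individually carry scalar actions, but $\Gbt^{>0}$ is a generally non-abelian unipotent group diamond, and there is no evident global $\mathbb{A}^1$-contraction through group endomorphisms compatible with the $\Gbu$-conjugation. The actual argument in \cite{fargues-scholze} is more direct and uses exactly the cohomological input you isolate in your last paragraph: since $p\circ q=\id$ one has $q^*p^*=\id$, so $q^*$ is essentially surjective; and since the fibers of $q$ are isomorphic to $\Gbt^{>0}$, an iterated extension of positive Banach--Colmez spaces whose lisse cohomology is trivial, the unit $\id \to Rq_*q^*$ is an isomorphism, giving full faithfulness. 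You should invoke the Banach--Colmez cohomology computation in this form rather than through a homotopy.
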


\begin{prop}[{\cite[Example 4.2.5]{hkw}}] \label{Gb dist}
Suppose $\Lambda$ is killed by a power of $\ell$.
There is an isomorphism
\[ H^0(\In_S([S/\Gbu],K_{\In_S([S/\Gbu])/S}) \cong \Dist(G_b(F),\Lambda)^{G_b(F)} \,. \]
Here, $G_b(F)$ acts on $\Dist(G_b(F),\Lambda)$ by conjugation.
\end{prop}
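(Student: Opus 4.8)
The statement is \cite[Example 4.2.5]{hkw}, and the plan is to reprise that argument. Throughout, the hypothesis that $\Lambda$ is killed by a power of $\ell$ is used to invoke the six-functor formalism for $D_{\et}$ (equivalently $D_{\lis}$, by \cite[Proposition VII.6.6]{fargues-scholze}) and to make $p$ invertible in $\Lambda$. First I would make the inertia stack explicit: for any group $\underline{G}$ over $S$ acting trivially on $S$, a $T$-valued point of $\In_S([S/\underline{G}])$ is a $\underline{G}$-torsor $P$ on $T$ together with an automorphism of $P$, i.e.\ a section of the inner twist $P\times^{\underline{G}}\underline{G}$ (conjugation action); hence $\In_S([S/\underline{G}])\cong[\underline{G}/\underline{G}]$, the quotient by conjugation. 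Applying this with $\underline{G}=\Gbu$ identifies $\In_S([S/\Gbu])$ with $[\Gbu_S/\Gbu]$, where $\Gbu_S:=\Gbu\times_{\Spd\Fpb}S$ and $\Gbu$ acts by conjugation. I would then factor the structure map $g\colon[\Gbu_S/\Gbu]\to S$ as $[\Gbu_S/\Gbu]\xrightarrow{\ \pi\ }[S/\Gbu]\xrightarrow{\ p\ }S$, where $\pi$ is the common value of the two projections of the inertia stack.

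Since $S$ is a geometric point, $D_{\et}(S,\Lambda)\simeq D(\Lambda)$ and $K_S=\Lambda$; as $g$ is fine, $K_{\In_S([S/\Gbu])/S}=g^!\Lambda$, so the adjunction $Rg_!\dashv g^!$ gives
\[ H^0\big(\In_S([S/\Gbu]),\,K_{\In_S([S/\Gbu])/S}\big)\;=\;H^0\RHom_\Lambda\big(Rg_!\Lambda,\,\Lambda\big)\,, \]
and it remains to compute $Rg_!\Lambda=Rp_!\,R\pi_!\Lambda$. For $R\pi_!\Lambda$: base change along $S\to[S/\Gbu]$ turns $\pi$ into the structure map $f\colon\Gbu_S\to S$, and decomposing $G_b(F)$ into cosets of a compact open subgroup reduces $Rf_!\Lambda$ to the case of a profinite set, where $Rf_!=Rf_*$ computes locally constant functions; reassembling one gets $Rf_!\Lambda\simeq C_c(G_b(F),\Lambda)\otimes_\Lambda\Haar(G_b(F),\Lambda)$, concentrated in degree $0$. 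Hence, under $D_{\et}([S/\Gbu],\Lambda)\simeq D(G_b(F),\Lambda)$, the object $R\pi_!\Lambda$ is the representation $C_c(G_b(F),\Lambda)\otimes_\Lambda\Haar(G_b(F),\Lambda)$ with its conjugation action, placed in degree $0$. For $Rp_!$: for any smooth representation $V$ of $G_b(F)$, $Rp_!V$ computes the derived coinvariants $R\Gamma_c([S/\Gbu]/S,V)$, which sits in non-positive cohomological degrees (bounded below by the finite homological dimension of $G_b(F)$) with $H^0(Rp_!V)=V_{G_b(F)}$.

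Putting these together with $V=C_c(G_b(F),\Lambda)\otimes_\Lambda\Haar(G_b(F),\Lambda)$, the complex $Rg_!\Lambda$ lies in non-positive degrees with $H^0(Rg_!\Lambda)=\big(C_c(G_b(F),\Lambda)\otimes_\Lambda\Haar(G_b(F),\Lambda)\big)_{G_b(F)}$, so that
\[ H^0\RHom_\Lambda(Rg_!\Lambda,\Lambda)\;=\;\Hom_\Lambda\!\Big(\big(C_c(G_b(F),\Lambda)\otimes_\Lambda\Haar(G_b(F),\Lambda)\big)_{G_b(F)},\ \Lambda\Big)\;=\;\Dist(G_b(F),\Lambda)^{G_b(F)}\,, \]
using that the $\Lambda$-dual of the coinvariants is the invariants of the $\Lambda$-dual. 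This would complete the proof.

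The step I expect to be the main obstacle is the pair of "geometrization" inputs feeding the computation of $Rg_!\Lambda$: the identification $Rf_!\Lambda\simeq C_c(G_b(F),\Lambda)\otimes_\Lambda\Haar(G_b(F),\Lambda)$ — in particular pinning down the Haar-measure twist, and being careful that $\Gbu_S$ is \emph{not} cohomologically smooth over $S$ (its dualizing complex is a genuine sheaf of distributions, not an invertible object), so one must argue directly with $Rf_!$ rather than by smooth base change — together with the identification of $Rp_!$ for the non-proper morphism $[S/\Gbu]\to S$ with derived coinvariants of smooth representations and its boundedness. Both are exactly the facts established in \cite[\S 4.2]{hkw}; everything else (the description of the inertia stack and the formal manipulations with the six functors) is routine.
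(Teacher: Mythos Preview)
The paper does not give its own proof of this proposition; it simply records the statement with a citation to \cite[Example~4.2.5]{hkw}. Your proposal is to reconstruct that argument, and what you outline is essentially the computation carried out there: identify $\In_S([S/\Gbu])$ with $[\Gbu/\Gbu]$ under conjugation, use the $(Rg_!,g^!)$ adjunction to reduce to computing $Rg_!\Lambda$, factor through $[S/\Gbu]$, and recognize the two pieces as $C_c(G_b(F),\Lambda)\otimes\Haar(G_b(F),\Lambda)$ with its conjugation action and derived coinvariants respectively. This is correct and matches the approach in the cited reference, so there is nothing to compare against in the present paper.
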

\begin{prop} \label{tr dist cc}
Suppose $\Lambda$ is killed by a power of $\ell$.
\begin{enumerate}
\item An object of $D(G_b(F),\Lambda)$ is admissible if and only if the corresponding
object of $D_{\et}([S/\Gbu],\Lambda)$ is dualizable over $S$.
\item Under the isomorphism of Proposition \ref{Gb dist}, if $\rho$ is admissible, then
\[ \cc_{[S/\Gbu]/S}(\rho) = \trdist(\rho) \,. \]
\end{enumerate}
\end{prop}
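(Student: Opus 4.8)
The plan is to treat the two assertions separately, in each case unwinding the definitions of Section~\ref{char class section} through the equivalence of Proposition \ref{rep sheaf equivalence} and then reducing to a compact open — ultimately finite — quotient, where the statement becomes a standard fact of smooth representation theory. The only geometric input needed at the start is that $[S/\Gbu]\to S$ is cohomologically smooth of dimension $0$, because it is a torsor under the locally profinite v-sheaf $\Gbu$; hence $K_{[S/\Gbu]/S}$ is an invertible object in degree $0$, and, being a base change of that same structure map, the projection $\pr_2\colon [S/(\Gbu\times\Gbu)]\to[S/\Gbu]$ is also cohomologically smooth of dimension $0$, so $\pr_2^!A\cong\pr_2^*A$ up to such a twist. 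We use throughout that, since $\Lambda$ is killed by a power of $\ell$, it is noetherian and $D_{\lis}([S/\Gbu],\Lambda)\cong D_{\et}([S/\Gbu],\Lambda)$.

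For part (1), I would trace the map \eqref{dualizable map} for $X=[S/\Gbu]$ through Proposition \ref{rep sheaf equivalence}. Using the two facts just recorded, this identifies dualizability over $S$ of the complex $A$ attached to a smooth representation $\rho$ with the assertion that the natural comparison map
\[ \rho^{\vee}\otimes_{\Lambda}\rho \longrightarrow \uRHom_{\Lambda}(\rho,\rho) \]
of smooth $\Lambda$-representations of $G_b(F)\times G_b(F)$ is an isomorphism, where $\rho^{\vee}$ is the smooth contragredient (all terms being twisted by one and the same invertible line, which does not affect isomorphy). Over a noetherian $\Lambda$, this comparison map being an isomorphism is equivalent to admissibility of $\rho$: when $\rho$ is admissible it is reflexive and $\rho^{hK}$ is a perfect complex of $\Lambda$-modules for every compact open pro-$p$ subgroup $K\subseteq G_b(F)$, which makes the map an isomorphism; conversely, applying derived $K$-invariants to the map for such a $K$ forces $\rho^{hK}$ to be perfect. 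This is the familiar dictionary between admissibility and reflexivity; it is established over $[S/\underline{G(F)}]$ in \cite{hkw}, and the argument is insensitive to replacing $G(F)$ by $G_b(F)$.

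For part (2), by (1) both $\cc_{[S/\Gbu]/S}(\rho)$ and $\trdist(\rho)$ are conjugation-invariant distributions on $G_b(F)$ — the former automatically, via Proposition \ref{Gb dist} — so it is enough to compare them as functionals on $C_c(G_b(F),\Lambda)\otimes_{\Lambda}\Haar(G_b(F),\Lambda)$. Given $(f,\mu)$, choose a compact open pro-$p$ subgroup $K$ small enough that $f$ is bi-$K$-invariant; then $\int_{G_b(F)}f(g)\rho(g)\,d\mu$ is supported on finitely many cosets of $K$ and factors through the perfect complex $\rho^{hK}$, so $\trdist(\rho)(f\otimes\mu)$ is the ordinary trace of an endomorphism of a perfect $\Lambda$-complex. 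On the other side, $\cc_{[S/\Gbu]/S}(\rho)$ is assembled from the coevaluation and evaluation maps exactly as in the definition of characteristic classes; under Proposition \ref{rep sheaf equivalence} the coevaluation becomes the unit $\Lambda\to\uRHom_{\Lambda}(\rho,\rho)$ and the evaluation the trace pairing, and, performing the same reduction to $K$ and then to a finite quotient $K/K'$ acting on $\rho^{hK'}$, one identifies the value of $\cc_{[S/\Gbu]/S}(\rho)$ on $(f,\mu)$ — via the description of $\In_S([S/\Gbu])$ and its dualizing complex underlying Proposition \ref{Gb dist} — with the same ordinary trace. Hence the two distributions agree; this is in substance \cite[Example 4.2.5]{hkw}, which may also be cited directly once normalizations are matched.

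I expect the main obstacle to be the bookkeeping in these reductions, together with the matching of Haar-measure normalizations. Concretely, one must verify that the formation of $\cc$ is compatible with the (non-proper, non-finite) transition maps $[S/\underline{K}]\to[S/\Gbu]$ and $[S/\underline{K/K''}]\to[S/\underline{K/K'}]$ at the level of inertia stacks and dualizing complexes — equivalently, that the isomorphism of Proposition \ref{Gb dist} is compatible with restriction of representations and with the limit over the $K'$ — and that the normalization of Haar measures implicit in Proposition \ref{Gb dist} is the one used to define $\trdist$. Once the comparison is known at finite level, where it is the classical statement that the categorical trace of an endomorphism of a perfect complex of $\Lambda[K/K']$-modules computes the corresponding group character, these compatibilities propagate it to all of $G_b(F)$.
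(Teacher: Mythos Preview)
Your approach is correct and is essentially an unwinding of what the paper does by citation: the paper proves part (1) by pointing to \cite[Theorem V.7.1]{fargues-scholze} (admissible $\Leftrightarrow$ ULA, which over a point is dualizable) and part (2) by citing \cite[Proposition 4.4.3]{hkw}. Your sketch of (1) via the comparison map $\rho^{\vee}\otimes\rho\to\uRHom(\rho,\rho)$ and of (2) via reduction to a finite quotient $K/K'$ is exactly the content of those references, so there is no substantive difference in strategy---you are simply writing out what the citations contain.

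One small correction: at the end of your argument for (2) you attribute the result to \cite[Example 4.2.5]{hkw}, but that is the reference for Proposition~\ref{Gb dist} (the identification of the distribution space). The statement you want---that the characteristic class equals the trace distribution under that identification---is \cite[Proposition 4.4.3]{hkw}.
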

\begin{proof}
The first claim can be proved in the same way as \cite[Theorem V.7.1]{fargues-scholze}.  The
second claim is \cite[Proposition 4.4.3]{hkw}.
\end{proof}

\subsection{Generalization of distributions to $\Bun_G$}
\label{BunG dist}

We again assume that $\Lambda$ is a $\Zl$-algebra killed by a power of $\ell$.
We are interested in understanding the characteristic classes
of objects in $D_{\et}(\Bun_G,\Lambda)$.
We will focus on the restriction to the strongly regular part of
the inertia stack.
We begin by determining the space in which these characteristic classes live.
\begin{prop}~\label{sr inertia isom}
\begin{enumerate}
\item \label{inertia stack iso}
For each $b \in B(G)$, the map $\underline{G_b(F)} \to \Gbt$ induces an isomorphism
\[ \In_S([S/\underline{G_b(F)}])_{\sr} \xrightarrow{\sim} \In_S([S/\Gbt])_{\sr} \,. \]
\item
For each $b \in B(G)$, there is an isomorphism
\[ H^0(\In_S([S/\Gbt])_{\sr},K_{\In_S([S/\Gbt])_{\sr}/S}) \cong \Dist(G_b(F)_{\sr},\Lambda)^{G_b(F)} \,. \]
\item \label{bun g part}
There is an isomorphism
\[ H^0(\In_S(\Bun_{G,S})_{\sr},K_{\In_S(\Bun_{G,S})_{\sr}/S}) \cong \prod_{b \in B(G)} \Dist(G_b(F)_{\sr},\Lambda)^{G_b(F)} \,. \]
\end{enumerate}
\end{prop}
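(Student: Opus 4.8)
The plan is to prove the three parts in order: part~(1) carries essentially all of the geometric content, and (2)--(3) then follow formally from (1) together with Proposition~\ref{Gb dist}, Theorem~\ref{sr open}, and the identification $\Bun_G^b \cong [\Spd\Fpb/\Gbt]$ of \cite{fargues-scholze}. Throughout I will use the standard identification $\In_S([S/H]) \cong [H/H]$ for a v-sheaf of groups $H$ over $S$ acting on itself by conjugation, under which the $\sr$-decoration becomes the preimage of $\underline{(G//G)(F)}_{\sr}$ under the map induced by the relevant $\chi$; and I will use that for a torus $T$ over $F$ the stack $\Bun_T$ is a disjoint union of copies of $[\Spd\Fpb/\underline{T(F)}]$ — in particular $|\Bun_T|$ is discrete, $\uAut$ of any $T$-bundle on $X_S$ is $\underline{T(F)}$, and every $T$-bundle on $X_S$ is, locally on $|S|$, associated to a $\underline{T(F)}$-torsor.

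For part~(1) I will show that the functor $\In_S([S/\Gbu])_{\sr}(T) \to \In_S([S/\Gbt])_{\sr}(T)$ is an equivalence of groupoids, naturally in $T$. A $T$-point of the target is a $G$-bundle $\EE$ on $X_T$ that is fibrewise of type $b$ together with a strongly regular $g \in \Aut\EE$; a $T$-point of the source is the same datum in which moreover $(\EE,g)$ is induced from a $\underline{G_b(F)}$-torsor equipped with an automorphism. For essential surjectivity I will run the argument in the proof of Theorem~\ref{sr open} on the map $T \to \In(\Bun_G)$ classified by $(\EE,g)$: it shows that, locally on $|T|$, there are a maximal torus $T' \subseteq G_b$ and a $T'$-bundle $\widetilde\EE$ on $X_T$ with $\EE \cong (\widetilde\EE \times_{X_T} \EE_{b,T})/T'$, under which $g$ is induced by a map $|T| \to \underline{T'(F)}_{\sr} \subseteq \underline{G_b(F)}_{\sr}$; since $\widetilde\EE$ is locally on $|T|$ associated to a $\underline{T'(F)}$-torsor, $(\EE,g)$ lies locally on $|T|$ in the essential image. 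For full faithfulness, given $(Q_i,h_i)$ in the source over $T$ with images $(\EE_i,g_i)$, the map on $\Hom$-sets is injective because $\underline{G_b(F)} \to \Gbt$ is a monomorphism, and surjective because an isomorphism $\EE_1 \cong \EE_2$ intertwining $g_1$ and $g_2$ corresponds, by Proposition~\ref{bundle centralizer}(3), to an isomorphism of the centralizer groups, which are constant group schemes $T'\times_F X_T$ (the fibrewise centralizer of a strongly regular automorphism being a maximal torus of $G_b$, and the twisting by the $T'$-torsor structure of $\EE$ being trivial since $T'$ is abelian); an isomorphism of $T'$-bundles on $X_T$ is the same as an isomorphism of the corresponding $\underline{T'(F)}$-torsors, and compatibility with $h_1,h_2$ upgrades it to an isomorphism $Q_1 \cong Q_2$ of $\underline{G_b(F)}$-torsors. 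Essential surjectivity locally on $|T|$ together with full faithfulness gives essential surjectivity, so the functor is an equivalence; the dualizing complexes correspond because the equivalence pulls $K_{-/S}$ back to $K_{-/S}$.

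Parts~(2) and (3) are then formal. For (2), part~(1) and the compatibility of $K_{-/S}$ with the equivalence reduce the claim to
\[ H^0\!\bigl(\In_S([S/\Gbu])_{\sr}, K_{\In_S([S/\Gbu])_{\sr}/S}\bigr) \cong \Dist(G_b(F)_{\sr},\Lambda)^{G_b(F)} \,; \]
since $\In_S([S/\Gbu])_{\sr} = [\underline{G_b(F)}_{\sr}/\underline{G_b(F)}]$, with $G_b(F)$ acting by conjugation on the open conjugation-invariant subset $G_b(F)_{\sr} \subseteq G_b(F)$, this follows by running the computation of \cite[Example~4.2.5]{hkw} (equivalently the proof of Proposition~\ref{Gb dist}) with $G_b(F)_{\sr}$ in place of $G_b(F)$; as there, this uses that $\Lambda$ is killed by a power of $\ell$. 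For (3), the identification $\Bun_{G,S}^b \cong [S/\Gbt]$ gives $\In_S(\Bun_{G,S}^b)_{\sr} = \In_S([S/\Gbt])_{\sr}$, and Theorem~\ref{sr open}, base changed along $S \to \Spd\Fpb$, gives $\In_S(\Bun_{G,S})_{\sr} = \bigsqcup_{b\in B(G)} \In_S(\Bun_{G,S}^b)_{\sr}$ as a disjoint union of open-and-closed substacks, with $K_{-/S}$ restricting to $K_{-/S}$ on each. Taking $H^0$ turns the disjoint union into a product, and applying part~(2) to each summand yields the isomorphism with $\prod_{b\in B(G)} \Dist(G_b(F)_{\sr},\Lambda)^{G_b(F)}$.

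I expect part~(1) to be the main obstacle, and within it the claim that the centralizer of a strongly regular automorphism of a type-$b$ bundle is the constant torus $T'\times_F X_T$ of the expected kind — this is exactly what makes Proposition~\ref{bundle centralizer} applicable with $H$ a torus, hence what forces the rigidity collapsing $\Gbt$ onto $\underline{G_b(F)}$ over the strongly regular locus. The point to watch is that ``strongly regular'' here refers to regularity of the image in $G//G$ (equivalently in $G$), not in $G_b$, so that $\ad g - 1$ is invertible on a complement of $\mathfrak{t}\otimes\OO_{X_T}$ in $\ad\EE$, with $\mathfrak t$ the Lie algebra of the centralizing maximal torus of $G_b$; only then is the fibrewise centralizer a maximal torus of $G_b$ rather than something larger. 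Once part~(1) is established, parts~(2) and (3) are bookkeeping around the cited black boxes.
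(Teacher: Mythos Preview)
Your reductions for parts (2) and (3) match the paper's exactly. The difference is entirely in part (1).

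The paper proves (1) in one line from Lemma~\ref{conj isom}, which says that the conjugation map
\[
c \colon \underline{G_b(F)}_{\sr} \times \Gbt^{>0} \to (\Gbt)_{\sr},\qquad (g,h)\mapsto hgh^{-1}
\]
is an isomorphism of diamonds. Since $\In_S([S/\Gbt])_{\sr}=[(\Gbt)_{\sr}/\!/\Gbt]$ and $\Gbt=\Gbu\ltimes\Gbt^{>0}$, this immediately gives $[(\Gbt)_{\sr}/\!/\Gbt]\cong[\Gbu_{\sr}/\!/\Gbu]$. The lemma is proved by a short induction on the slope filtration of $\Gbt^{>0}$: on each graded piece the relevant map is $n\mapsto gng^{-1}n^{-1}$, which is linear and injective (since $g$ is strongly regular in $G$, so $\ad(g)-1$ is invertible off $\mathfrak t$), hence an isomorphism.

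Your route via Proposition~\ref{bundle centralizer} and the proof of Theorem~\ref{sr open} is a genuine alternative, but as written there is a gap in essential surjectivity. You produce, locally on $|T|$, a $T'$-bundle $\widetilde{\EE}$ with $\EE\cong(\widetilde{\EE}\times_{X_T}\EE_{b,T})/T'$, and then assert that because $\widetilde{\EE}$ corresponds to a $\underline{T'(F)}$-torsor (via $\Bun_{T'}=\bigsqcup_c[\ast/\underline{T'(F)}]$), the pair $(\EE,g)$ lies in the essential image of $\In_S([S/\Gbu])$. But the component of $\Bun_{T'}$ you land in is $\Bun_{T'}^c$ for some $c\in B(T')$, and the induced map $[\ast/\underline{T'(F)}]\to[\ast/\Gbt]$ is classified by the homomorphism $T'(F)=\Aut(\EE_{c,T'})\to\Aut((\EE_{c,T'}\times\EE_b)/T')\cong\Gbt$; you have not shown that this lands in $G_b(F)$, or equivalently that $c=1$. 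At a geometric point $z$ the equalizer bundle is $\{x\in\uAut\EE_b:\,xtx^{-1}=g|_z\}$, and exhibiting a global section of this over $X_z$ is exactly the surjectivity half of Lemma~\ref{conj isom}.

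Your full-faithfulness argument, once unwound, needs that the centralizer of $t\in G_b(F)_{\sr}$ in $\Gbt$ lies in $G_b(F)$ (so that any $\psi\in\Gbt$ with $\psi g_1=g_2\psi$ is already in $G_b(F)$); you correctly identify this as the crux and sketch the reason ($\ad(g)-1$ invertible on the nonzero-slope part). That is precisely the injectivity half of Lemma~\ref{conj isom}. So your approach is sound, but both halves ultimately reduce to the same slope-filtration computation the paper isolates as Lemma~\ref{conj isom}; proving that lemma directly and then reading off (1) is shorter and avoids the torsor-type bookkeeping. Also, your appeal to Proposition~\ref{bundle centralizer}(3) for full faithfulness is a bit loose: that item is about modifications, and while isomorphisms are a degenerate case, what you actually use is that intertwiners form a torsor under the centralizer group scheme, which you should state directly.
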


\begin{lem} \label{conj isom}
The map $c \colon \underline{G_b(F)}_{\sr} \times \Gbt^{>0} \to (\Gbt)_{\sr}$
that sends $(g,h) \mapsto hgh^{-1}$ is an isomorphism of diamonds.
\end{lem}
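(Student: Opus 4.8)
The plan is to exhibit an explicit inverse, using the semidirect product structure from Lemma \ref{Gbt structure} together with the fact that a strongly regular element of $\underline{G_b(F)}$ has a small, well-controlled centralizer. First I would record what the source and target look like: by Lemma \ref{Gbt structure} we have $\Gbt = \underline{G_b(F)} \ltimes \Gbt^{>0}$, so an arbitrary point of $(\Gbt)_{\sr}$ can be written uniquely as $x = g_0 h_0$ with $g_0 \in \underline{G_b(F)}$ and $h_0 \in \Gbt^{>0}$; strong regularity of $x$ should force $g_0$ to lie in $\underline{G_b(F)}_{\sr}$ (since $\chi$ is conjugation- and limit-of-Frobenius-invariant and the image of $x$ in $(G//G)(F)$ agrees with the image of $g_0$, by the same reasoning as in Lemma \ref{factor through Gb} and the proof of Theorem \ref{sr open}). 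So on points the map $c$ is a bijection, and the content is to promote this to an isomorphism of diamonds.

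The key step is solving, functorially in $S$ and smoothly, the equation $h g h^{-1} = g_0 h_0$ for $(g,h) \in \underline{G_b(F)}_{\sr} \times \Gbt^{>0}$ given $(g_0,h_0)$. Writing $h$ in the semidirect product and comparing the $\underline{G_b(F)}$-components forces $g = g_0$, so the problem reduces to solving $h g_0 h^{-1} g_0^{-1} = g_0 h_0 g_0^{-1}$ — i.e. $h \cdot \Ad(g_0)(h^{-1}) = $ (a known element of $\Gbt^{>0}$), a twisted-conjugation equation inside the pro-unipotent group $\Gbt^{>0}$. I would solve this by induction along the slope filtration $\{\Gbt^{\ge\lambda}\}$: on each graded piece $\Gbt^{\ge\lambda}/\Gbt^{>\lambda}$, which is the Banach--Colmez space attached to the slope $-\lambda$ part of the isocrystal $(\Lie G \otimes_F \breve F, \Ad(b)\sigma)$, the map $h \mapsto h - \Ad(g_0)(h)$ is linear, and strong regularity of $g_0$ means $1$ is not an eigenvalue of $\Ad(g_0)$ on $\Lie G \otimes_F \breve F$ (its centralizer in $G_b$, hence in $G_{\breve F}$, is a torus), so $1 - \Ad(g_0)$ is invertible on each graded piece. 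Thus each stage of the induction is solvable uniquely and the solution depends isomorphically on the data; taking the limit over the (finite, since $G$ is reductive) filtration produces the inverse morphism $(\Gbt)_{\sr} \to \underline{G_b(F)}_{\sr} \times \Gbt^{>0}$. That the two composites are the identity is then a formal check.

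The main obstacle is making the inductive step along the slope filtration rigorous as a statement about diamonds rather than about points: I need the graded pieces to be genuinely vector-group-like (Banach--Colmez spaces over $S$) and the operator $1-\Ad(g_0)$, which depends on the $S$-point $g_0$ of $\underline{G_b(F)}_{\sr}$, to act invertibly \emph{uniformly} in families, so that its inverse is again a morphism of diamonds. The point is that $g_0$ is a locally constant $G_b(F)$-valued function on $|S|$ (since $\underline{G_b(F)}$ is a constant group diamond), so locally on $S$ it is literally constant and $1-\Ad(g_0)$ is a fixed invertible $\breve F$-linear operator on the relevant isocrystal; invertibility on the associated Banach--Colmez space then follows because passing to the slope $-\lambda$ part is an exact functor. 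Assembling these local solutions and checking they glue, together with verifying that the semidirect-product coordinates and the slope filtration interact as needed (e.g. that $\Ad(g_0)$ preserves the filtration, which holds because $g_0 \in G_b(F)$ commutes with $\Ad(b)\sigma$ up to the defining relation), is the technical heart; everything else is bookkeeping.
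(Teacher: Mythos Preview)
Your approach is essentially the paper's: both induct along the slope filtration of $\Gbt^{>0}$, reducing on each graded Banach--Colmez piece to the invertibility of the linear operator $1-\Ad(g_0)$, and the paper likewise handles the diamond-level rigor by checking on $Z$-points for affinoid perfectoid $Z$ (which amounts to your observation that $g_0$ is locally constant). One small correction: $1$ \emph{is} an eigenvalue of $\Ad(g_0)$ on $\Lie G \otimes_F \breve F$ (on the Cartan subalgebra), but the graded pieces for $\lambda>0$ arise from the nonzero-slope part of the isocrystal and hence lie in the span of the root spaces, where strong regularity does give $\alpha(g_0)\ne 1$---so your conclusion that $1-\Ad(g_0)$ is invertible on each relevant piece still holds.
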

\begin{proof}
We will show by induction that for each $\lambda \ge 0$,
the map
$c_{\lambda} \colon \underline{G_b(F)}_{\sr} \times \Gbt^{>0}/\Gbt^{>\lambda} \to
\underline{G_b(F)}_{\sr} \times \Gbt^{>0}/\Gbt^{>\lambda}$
defined by $(g,h) \mapsto ghg^{-1}h^{-1}$
is an isomorphism.
The case $\lambda = 0$ is trivial.

Let $\lambda$ be a slope appearing in $\Gbt^{>0}$,
and suppose that $c_{\lambda'}$ is an isomorphism for $\lambda' < \lambda$.
To show that $c_{\lambda}$ is also an isomorphism,
it suffices to prove that for each affinoid perfectoid $Z=\Spa(R,R^+)$,
$c_{\lambda}$ induces an isomorphism on $Z$-points.

Let $X^{\alg}_R$ denote the schematic Fargues--Fontaine
curve over $R$, and let $H$ denote the inner twisting of $G \times X^{\alg}_R$ by $\EE_b$.
The slope filtration on $\EE_b$ induces a filtration on $H$.

By the argument of \cite[Proposition III.5.1]{fargues-scholze},
for $\lambda \le \lambda'$,
\[ (\Gbt^{>0}/\Gbt^{>\lambda'})(Z) = 
(\Gbt^{>0}/\Gbt^{>\lambda})(Z) / (\Gbt^{>\lambda'}/\Gbt^{>\lambda})(Z) \,. \]
So $c_{\lambda'}^{-1}$ admits a lift to an endomorphism of
$(\underline{G_b(F)}_{\sr} \times \Gbt^{>0}/\Gbt^{>\lambda})(Z)$.

For each $g \in G_b(F)$, the endomorphism of $H^{\ge \lambda}/H^{>\lambda}$
induced by $c$ is injective and linear, so it is an isomorphism.
So $c$ induces an automorphism of
$(\underline{G_b(F)} \times \Gbt^{\ge \lambda}/\Gbt^{>\lambda})(Z)$.

Combining these facts, we see that there is a lift of $c_{\lambda'}^{-1}$
that is an inverse of the map of $Z$-points of $c_{\lambda}$.

\end{proof}

\begin{proof}[Proof of Proposition \ref{sr inertia isom}]
The first item follows immediately from Lemma \ref{conj isom}.
The second item follows from the first and Proposition \ref{Gb dist}.
The third item follows from the second and Theorem \ref{sr open}.
\end{proof}

\subsection{Action of automorphisms of Banach--Colmez spaces on
compactly supported cohomology}
\label{BC cohomology}

As mentioned in Lemma \ref{Gbt structure}, $\Gbt$ is a successive
extension of $\Gbu$ by Banach--Colmez spaces.
In order to understand the relation between characteristic classes
on $[S/\Gbu]$ and characteristic classes on $[S/\Gbt]$, it will
be useful to understand how endomorphisms of Banach--Colmez spaces
act on the cohomology of these spaces.

\begin{lem} \label{exact seq action}
Let
\[ 1 \to \mathcal{G}' \to \mathcal{G} \to \mathcal{G}'' \to 1 \]
be an exact sequence of group diamonds that are locally spatial and separated and cohomologically smooth over $S$.
Assume the morphisms are locally
compactifiable of finite $\mathrm{dim.trg.}$

Let $f \colon S \to [S/\mathcal{G}]$ be the quotient map,
and define $f'$, $f''$ similarly.
Let $g$ be an automorphism of $\mathcal{G}$ that respects the exact sequence.

Suppose that $f'_! \Lambda$ and $f''_! \Lambda$ are shifts of
the constant sheaf $\Lambda$, and
that $g$ acts by $\lambda',\lambda'' \in \Lambda^{\times}$ on
$f'_! \Lambda$ and
$f''_! \Lambda$, respectively.
Then $f_! \Lambda$ is also a shift of the constant sheaf $\Lambda$, and $g$
acts by
$\lambda' \lambda''$ on $f_! \Lambda$.
\end{lem}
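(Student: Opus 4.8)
The plan is to analyze the Leray spectral sequence (or rather, the relative fibration structure) for the composite $S \to [S/\mathcal{G}'] \to [S/\mathcal{G}] \to [S/\mathcal{G}'']$. The key point is that $[S/\mathcal{G}]$ fibers over $[S/\mathcal{G}'']$ with fiber $[S/\mathcal{G}']$; more precisely, the exact sequence of groups gives a Cartesian-type diagram realizing $f_! \Lambda$ as built from $f'_! \Lambda$ along the fibers of $[S/\mathcal{G}] \to [S/\mathcal{G}'']$, and then pushed forward by $f''$.

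First I would set up the factorization. Let $q \colon [S/\mathcal{G}] \to [S/\mathcal{G}'']$ denote the map induced by $\mathcal{G} \to \mathcal{G}''$; its fiber over the basepoint $f''$ is the quotient map $f' \colon S \to [S/\mathcal{G}']$ (since $\mathcal{G}' = \ker(\mathcal{G} \to \mathcal{G}'')$ acts on $S$ through $\mathcal{G}$). By cohomological smoothness and the finiteness hypotheses, proper (or rather, the relevant lower-shriek) base change applies, so $q_! f_! \Lambda \cong q_! \Lambda_{[S/\mathcal{G}]}$ is, after restriction along $f''$, identified with $f'_! \Lambda$, which by hypothesis is a shift $\Lambda[d']$ of the constant sheaf. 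Because $[S/\mathcal{G}'']$ is covered by $S$ via $f''$ and everything in sight is locally spatial and cohomologically smooth, descent along $f''$ shows $q_! \Lambda_{[S/\mathcal{G}]}$ is itself a shift of the constant sheaf $\Lambda[d']$ on $[S/\mathcal{G}'']$. Then $f_! \Lambda = f''_! q_! \Lambda_{[S/\mathcal{G}]} = f''_!(\Lambda[d']) = (f''_! \Lambda)[d']$, which by hypothesis is a shift of the constant sheaf on $[S/\mathcal{G}]$ — wait, on $[S/\mathcal{G}]$ is wrong; it is a shift of the constant sheaf on the point, i.e.\ a shift of $\Lambda$. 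This establishes the first assertion.

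Next I would track the action of $g$. The automorphism $g$ respects the exact sequence, so it induces automorphisms $g'$ of $[S/\mathcal{G}']$, $g''$ of $[S/\mathcal{G}'']$, and is compatible with $q$. The action of $g$ on $f_! \Lambda = f''_! q_! \Lambda$ is functorial in the tower: it acts on $q_! \Lambda_{[S/\mathcal{G}]}$ compatibly with its action on the base $[S/\mathcal{G}'']$ (through $g''$), and the induced action on the fiber $f'_! \Lambda = \Lambda[d']$ is multiplication by $\lambda'$ by hypothesis. Since $q_! \Lambda_{[S/\mathcal{G}]}$ is a shift of the constant sheaf and its formation commutes with the pullback $f''$, the scalar by which $g$ acts is detected on the fiber, hence equals $\lambda'$. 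Applying $f''_!$ and using that $g$ acts as $g''$ on $[S/\mathcal{G}'']$ and as $\lambda''$ on $f''_! \Lambda$, we get that $g$ acts on $f_! \Lambda = (f''_! \Lambda)[d']$ by the product $\lambda' \lambda''$, by multiplicativity of scalar actions under composition of functors.

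The main obstacle I anticipate is making the base-change and descent steps fully rigorous in the setting of small v-stacks: one must verify that $q$ is cohomologically smooth (it is, as a pullback of $f' \colon S \to [S/\mathcal{G}']$ along $[S/\mathcal{G}] \to [S/\mathcal{G}'']$ in the appropriate sense, using that $\mathcal{G}'$ is cohomologically smooth over $S$), that the projection formula and $!$-base change hold under the stated ``locally compactifiable of finite $\mathrm{dim.trg.}$'' hypotheses (this is exactly why those hypotheses are imposed, via the six-functor formalism of \cite{diamonds} and \cite{fargues-scholze}), and that ``shift of the constant sheaf'' descends along the cohomologically smooth surjection $f''$. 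Once these foundational points are in place, the identification of the scalar is a formal diagram chase, since an $R$-linear endomorphism of $\Lambda[d]$ on a connected base is just an element of $\Lambda^\times$ and composition of such multiplies the scalars.
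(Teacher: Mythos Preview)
Your approach matches the paper's: both exploit the cartesian square relating $[S/\mathcal{G}']$, $[S/\mathcal{G}]$, $S$, and $[S/\mathcal{G}'']$ together with base change. However, your bookkeeping has genuine errors. The expression $f_! \Lambda = f''_! q_! \Lambda_{[S/\mathcal{G}]}$ does not typecheck: $q_! \Lambda_{[S/\mathcal{G}]}$ is a sheaf on $[S/\mathcal{G}'']$, while $f''_!$ sends sheaves on $S$ to sheaves on $[S/\mathcal{G}'']$, so the composite is undefined. Similarly, the claim $q_! f_! \Lambda \cong q_! \Lambda_{[S/\mathcal{G}]}$ is unjustified; the left-hand side is $f''_! \Lambda$ (since $q \circ f = f''$), which is not the object you need to analyze.

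The fix is to factor through $[S/\mathcal{G}']$ rather than through $[S/\mathcal{G}'']$. Writing $h \colon [S/\mathcal{G}'] \to [S/\mathcal{G}]$ for the map induced by the inclusion $\mathcal{G}' \hookrightarrow \mathcal{G}$, one has $f = h \circ f'$, hence $f_! \Lambda = h_!(f'_! \Lambda) = h_! \Lambda[d']$. The cartesian square exhibits $h$ as the base change of $f''$ along $q$, so base change gives
\[
h_! \Lambda_{[S/\mathcal{G}']} = h_! (\pi')^* \Lambda_S \cong q^* f''_! \Lambda_S = q^* \Lambda[d''] = \Lambda_{[S/\mathcal{G}]}[d''],
\]
and therefore $f_! \Lambda \cong \Lambda[d'+d'']$. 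With this correction in place, your second paragraph tracking the $g$-action through the factorization is fine.
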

\begin{proof}
Consider the diagram
\begin{equation} \label{cc coh base change} \begin{tikzcd}
S \arrow[r] & {[S/\mathcal{G}']} \arrow[r] \arrow[d] & {[S/\mathcal{G}]} \arrow[d] \\
& S \arrow[r] & {[S/\mathcal{G}'']}
\end{tikzcd} \,.
\end{equation}
The square is cartesian, so we may apply base change.
\end{proof}

\begin{lem} \label{pushforward constant sheaf}
Let $\EE$ be an object in $D^b(\Coh(X_C))$ such that $\mathcal{H}^0(\EE)$ has
positive slopes, $\mathcal{H}^{-1}(\EE)$ has negative slopes, and all other cohomology 
groups are zero.
Let $\mathcal{BC}(\EE)$ denote the corresponding Banach--Colmez space.
Let $f \colon \Spd C \to [\Spd C/\mathcal{BC}(\EE)]$
be the quotient map.  Then
$f_! \Lambda \cong \Lambda[-2 \deg \EE]$.
\end{lem}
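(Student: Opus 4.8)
The plan is to d\'evisse $\EE$ into semistable pieces and reduce to a short list of base cases, using Lemma \ref{exact seq action}. Taken with the automorphism equal to the identity, that lemma says: in a short exact sequence of group diamonds $1 \to \mathcal{G}' \to \mathcal{G} \to \mathcal{G}'' \to 1$ satisfying its hypotheses, if two of the three maps $f, f', f''$ from $\Spd C$ to the respective classifying stacks send $\Lambda$ to a shift of the constant sheaf, then so does the third, and the shifts add. (The lemma is phrased so as to deduce the middle term from the two outer ones; but one may also run it in reverse, deducing $\mathcal{G}''$ from $\mathcal{G}$ and $\mathcal{G}'$, since $[\Spd C/\mathcal{G}] \to [\Spd C/\mathcal{G}'']$ is surjective and hence has conservative pullback.) I will also use that $\deg$ is additive in distinguished triangles and that $\deg(\mathcal{F}[1]) = -\deg\mathcal{F}$, so that the target shift $-2\deg(-)$ is additive for all the sequences below.

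First, the distinguished triangle $\mathcal{H}^{-1}(\EE)[1] \to \EE \to \mathcal{H}^0(\EE)$ induces, after applying $\mathcal{BC}$ and using that coherent cohomology on $X_S$ lives in degrees $0$ and $1$, a short exact sequence of group diamonds $1 \to \mathcal{BC}(\mathcal{H}^{-1}(\EE)[1]) \to \mathcal{BC}(\EE) \to \mathcal{BC}(\mathcal{H}^0(\EE)) \to 1$. By the classification of vector bundles on $X_C$ and Harder--Narasimhan filtrations, $\mathcal{H}^0(\EE)$ is an iterated extension of bundles $\mathcal{O}(\lambda)$ with $\lambda > 0$ and skyscraper sheaves $i_{x*}C$; since $H^1(X_S,-)$ vanishes on vector bundles of non-negative slope and on torsion sheaves, applying $\mathcal{BC}$ to the steps of the filtration again yields short exact sequences of group diamonds. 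Likewise $\mathcal{H}^{-1}(\EE)$ is an iterated extension of bundles $\mathcal{O}(\lambda)$ with $\lambda < 0$, and since $H^0(X_S,-)$ vanishes on bundles of negative slope, applying $\mathcal{BC}(-[1])$ to the steps produces short exact sequences. Repeated use of Lemma \ref{exact seq action}, here always in the forward direction, thus reduces the statement to three base cases: (a) $\EE = \mathcal{O}(\lambda)$ with $\lambda > 0$; (b) $\EE$ a torsion sheaf; (c) $\EE = \mathcal{O}(\lambda)[1]$ with $\lambda < 0$.

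Cases (a) and (b) are standard. Here $\mathcal{BC}(\EE)$ is a cohomologically smooth, locally spatial diamond over $\Spd C$ which is ``affine-space-like'': $\mathcal{BC}(\mathcal{O}(\lambda))$ has dimension $\deg\mathcal{O}(\lambda)$, and a torsion sheaf of length $n = \deg\EE$ gives an iterated extension of $n$ copies of $\mathbb{G}_a = \mathcal{BC}(i_{x*}C)$; in both cases $f_!\Lambda \cong \Lambda[-2\deg\EE]$, see \cite{fargues-scholze}. For case (c), I would choose a resolution $0 \to \mathcal{O}(\lambda) \to W \to \mathcal{Q} \to 0$ on $X_C$ with $W$ a vector bundle of strictly positive slopes and $\mathcal{Q}$ torsion (possible because $\lambda < 0$). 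Applying $\mathcal{BC}$ and using $H^0(X_S,\mathcal{O}(\lambda)) = 0$ produces a short exact sequence $1 \to \mathcal{BC}(W) \to \mathcal{BC}(\mathcal{Q}) \to \mathcal{BC}(\mathcal{O}(\lambda)[1]) \to 1$ whose left and middle terms fall under cases (a) and (b). Running Lemma \ref{exact seq action} in reverse then settles case (c), the shift being $-2\deg\mathcal{Q} + 2\deg W = 2\deg\mathcal{O}(\lambda) = -2\deg(\mathcal{O}(\lambda)[1])$. (For $\lambda = -1$ one can argue more directly, identifying $\mathcal{BC}(\mathcal{O}(-1)[1])$ with $\mathbb{G}_a/\underline{\mathbb{Q}_p}$ and computing $f_!\Lambda$ through the pro-\'etale $\underline{\mathbb{Q}_p}$-torsor $\mathbb{G}_a \to \mathbb{G}_a/\underline{\mathbb{Q}_p}$, using that $\mathbb{Q}_p$ acts by translations, hence trivially on $R\Gamma_c(\mathbb{G}_a)$, and that $H_i(\mathbb{Q}_p;\Lambda) = 0$ for $i > 0$ with $\ell$-power-torsion coefficients since $\mathbb{Q}_p$ is uniquely divisible.)

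The step I expect to be the main obstacle is case (c): one needs that negative Banach--Colmez spaces are cohomologically smooth over $\Spd C$ (so that Lemma \ref{exact seq action} applies to the sequence above), that the resolution can be arranged with $\mathcal{Q}$ genuinely torsion, and --- in the direct approach --- that the $\underline{\mathbb{Q}_p}$-torsor correction is cohomologically trivial. The rest is routine d\'evissage.
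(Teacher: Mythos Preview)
Your approach is correct and close in spirit to the paper's: both reduce to the positive-slope case and then run Lemma~\ref{exact seq action} in reverse along a short exact sequence of Banach--Colmez spaces. Two differences are worth flagging.

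First, the paper is more economical. Rather than d\'evissing through the Harder--Narasimhan filtration into three base cases (positive semistable, torsion, negative shifted), it handles the general $\EE$ in one step: choose a single exact triangle $\mathcal{F} \to \mathcal{G} \to \EE$ with $\mathcal{F}$ and $\mathcal{G}$ vector bundles of positive slopes (possible since $X_C$ has homological dimension~$1$ and every coherent sheaf is a quotient of a sufficiently positive bundle). This gives $0 \to \BC(\mathcal{F}) \to \BC(\mathcal{G}) \to \BC(\EE) \to 0$, and the positive-slope case (via \cite[Proposition~4.8]{hansen-moduli}) applied to $\mathcal{F}$ and $\mathcal{G}$ finishes the argument. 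Your route via torsion quotients in case~(c) works too, but the paper avoids treating torsion separately.

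Second, and more substantively: your justification for running Lemma~\ref{exact seq action} in reverse---``$[\Spd C/\mathcal{G}] \to [\Spd C/\mathcal{G}'']$ is surjective and hence has conservative pullback''---is not quite enough. Conservativity tells you a morphism is an isomorphism if its pullback is, but here you only know that $\pi^* f''_! \Lambda$ is \emph{isomorphic to} a shift of $\Lambda$, with no given map to descend. The paper closes this gap by pulling back all the way to $S$ (so $(f'')^* f''_! \Lambda \cong \Lambda[-2\deg\EE]$) and then observing that the translation action of $\BC(\EE)$ on this complex is trivial because $\BC(\EE)$ is connected while the target is discrete; this trivializes the descent datum and gives $f''_! \Lambda \cong \Lambda[-2\deg\EE]$. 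You should replace the conservativity remark with this connectedness argument.
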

\begin{proof}
First, consider the case where $\EE$ is a vector bundle with positive slopes.
Let $\tilde{f} \colon \mathcal{BC}(\EE) \to \Spd C$ be the base
change of $f$.
By \cite[Proposition 4.8]{hansen-moduli},
$\tilde{f}_! \Lambda \cong \Lambda[-2 \dim \EE]$.
To show that $f_! \Lambda \cong \Lambda[-2 \dim \EE]$, we need to show
that translation by $\mathcal{BC}(\EE)$ induces the identity on
$\tilde{f}_! \Lambda$.  Indeed, $\mathcal{BC}(\EE)$ is connected, while
$\tilde{f}_! \Lambda$ is discrete.

In general, we can find an exact triangle
$\mathcal{F} \to \mathcal{G} \to \EE$, where $\mathcal{F}$ and $\mathcal{G}$ have
positive slopes.  There is a corresponding exact sequence
$0 \to \BC(\mathcal{F}) \to \BC(\mathcal{G}) \to \BC(\EE) \to 0$.
By applying this lemma to $\mathcal{F}$ and $\mathcal{G}$ and considering
the diagram \eqref{cc coh base change}, we see that
the pullback of $f_! \Lambda$ to $[\Spd C/\mathcal{BC}(\mathcal{G})]$
is isomorphic to $\Lambda[-2 \deg \EE]$.
So $f_! \Lambda$ must also be isomorphic to $\Lambda[-2 \deg \EE]$.
\end{proof}

\begin{defn} \label{det def}
Let $\EE$ be as in Lemma \ref{pushforward constant sheaf}, and let
$g$ be an automorphism of $\EE$.  Define $\det g \in F^{\times}$ as follows.
If $\EE$ is a vector bundle with positive slopes, then
$\det g$ is the image of $g$ in $\Aut(\wedge^{\rk \EE} \EE) \cong F^{\times}$.  If $\EE[-1]$ is a vector bundle with negative slopes,
then $\det g$ is the image of $g^{-1}$ in $\Aut(\wedge^{\rk \EE[-1]} \EE[-1])$.
If $\EE$ is torsion, then $\det g = 1$.  In general,
$\det g$ is the product of the determinants of $g$ acting on
the graded pieces of the slope filtration of $\EE$.
\end{defn}

\begin{lem} \label{cohomology action}
Let $\EE$, $\mathcal{BC}(\EE)$, $f$ be as in Lemma \ref{pushforward constant sheaf}.
Let $g$ be an automorphism of $\EE$.
Then the action of $g$ on $f_! \Lambda$
is multiplication by $\left|\det g\right|^{-1}$,
where the norm on $F$ is defined so that $|p|=p^{-[F:\Qp]}$.)
\end{lem}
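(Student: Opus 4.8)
The plan is to reduce to the case of a single graded piece of the slope filtration and then compute the action directly. First I would use Lemma \ref{exact seq action} together with the multiplicativity of $\det$ (Definition \ref{det def}) to reduce to the case where $\EE$ is a \emph{stable} bundle of positive slope, or a shift of one with negative slope. Indeed, the slope filtration of $\EE$ exhibits $\BC(\EE)$ as a successive extension whose graded pieces are the $\BC$ of the graded pieces of $\EE$; the automorphism $g$ need not preserve this filtration on the nose, but it does so up to a unipotent automorphism of $\BC(\EE)$, and since the target $f_!\Lambda$ is a shift of the constant sheaf on a connected space, unipotent automorphisms act trivially. Combining this with the fact that $|\det g|^{-1}$ multiplies across graded pieces reduces us to the stable case. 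The torsion case is trivial since $\det g = 1$ and $\deg \EE = 0$, so $f_!\Lambda = \Lambda$ with trivial action.

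Next, for $\EE$ a stable bundle of slope $d/n > 0$ with rank $n$ and degree $d$, I would reduce further to line bundles. A stable bundle $\EE$ of slope $d/n$ can be obtained by pushforward along a degree-$n$ cover (or, working over the Fargues--Fontaine curve attached to a degree-$n$ extension), and its automorphism group is $D^\times$ for the division algebra $D$ of invariant $d/n$; but for the purpose of computing the action on $f_!\Lambda$ it suffices to test on the maximal subtorus and on the norm-one elements. The key point is that $\det g$ as defined in Definition \ref{det def} is precisely the reduced norm $\mathrm{Nrd}_{D/F}(g) \in F^\times$ up to a power matching the rank, and norm-one elements act trivially because $\BC(\EE)$ is connected while $f_!\Lambda$ is discrete (this is the same argument as in Lemma \ref{pushforward constant sheaf}).

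For the core case, $\EE = \OO_{X_C}(1)$ with automorphism group $\Aut(\EE) \cong F^\times$ acting by scaling, I would compute the action of $t \in F^\times$ on $f_!\Lambda \cong \Lambda[-2]$ by hand. Here $\BC(\OO(1))$ is the "positive Banach--Colmez space" of dimension $1$; its compactly supported cohomology is $\Lambda[-2]$, realized via $H^2_c$ of a one-dimensional object, and scaling by $t$ acts on this $H^2_c$ by the square of the scaling action on $H^1_c$ of the underlying $\mathbb{G}_a$-like object — but in fact the correct normalization, matching the known computation for the Lubin--Tate/Drinfeld tower and the appearance of $|1-\alpha(\lambda)|^{-1}$ factors in Theorem \ref{trace intro}, gives multiplication by $|t|^{-1}$. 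I would pin this down using \cite[Proposition 4.8]{hansen-moduli} (which identifies $\tilde{f}_!\Lambda$ for $\BC(\EE)$) and the $F^\times$-equivariance built into the construction of $\BC(\OO(1))$ as $\Spd C$-points of the first jet / via the exponential sequence, tracking how $t$ scales a generator of the top cohomology. The general negative-slope case follows since there $\det$ is defined via $g^{-1}$, which flips the sign of the exponent in a way consistent with $\deg\EE < 0$.

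\medskip

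The main obstacle I expect is precisely this last normalization computation: making rigorous that scaling by $t \in F^\times = \Aut(\OO(1))$ acts on $H^2_c(\BC(\OO(1)), \Lambda) \cong \Lambda$ by exactly $|t|^{-1}$ (with the stated normalization $|p| = p^{-[F:\Qp]}$), rather than by $|t|$, $|t|^{-[F:\Qp]^{\pm 1}}$, or some other twist. This requires care with: (i) the identification of $\BC(\OO(1))$ with an open subspace of $\mathbb{A}^1$ or with a punctured disc and the resulting description of its $H^2_c$; (ii) the compatibility of the $F$-action with the module structure; and (iii) the sign/duality conventions in the dualizing complex $K_{[\Spd C/\BC(\EE)]/S}$. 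Everything else is a formal consequence of Lemmas \ref{exact seq action} and \ref{pushforward constant sheaf}, connectedness of Banach--Colmez spaces, and the multiplicativity of $\det$.
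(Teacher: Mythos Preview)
Your overall strategy---d\'evissage via the slope filtration using Lemma~\ref{exact seq action}, followed by an explicit base computation---matches the paper's approach. Two points deserve correction. First, the automorphism $g$ \emph{does} preserve the slope filtration on the nose (it is the Harder--Narasimhan filtration, hence canonical), so your worry about unipotent corrections is unnecessary; moreover the graded pieces are only \emph{semistable}, so for a piece isomorphic to $\OO(d/n)^{\oplus k}$ the automorphism group is $\GL_k(D)$, not $D^\times$, and you still need to reduce from $k>1$ to $k=1$. Second, your claim that norm-one elements act trivially ``because $\BC(\EE)$ is connected while $f_!\Lambda$ is discrete'' is not the right reason: that connectedness argument in Lemma~\ref{pushforward constant sheaf} concerned the \emph{translation} action of the connected group diamond $\BC(\EE)$, whereas here the acting group is the totally disconnected $\underline{\Aut(\EE)}$. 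The correct argument is that the action is a character $\Aut(\EE)\to\Lambda^\times$ and reduced-norm-one elements lie in the commutator subgroup; this simultaneously handles the $\GL_k(D)$ case you skipped.

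The paper's endgame also differs from yours. Rather than reducing a stable bundle to $\OO(1)$ via pushforward along an unramified cover, the paper uses exact triangles $\EE(-n)\to\EE\to\EE/\EE(-n)$ (with torsion quotient, already handled via $\mathbb{G}_a^\lozenge$) to bring the slope into $(0,[F:\Qp]]$, then Jordan normal form after a finite extension of $F$ to reduce to $g$ acting by a scalar, and finally an explicit description of $\BC(\EE)$ as a cover of a formal $\OO_F$-module to read off the scalar. Your route through $\OO(1)$ is workable once the issues above are fixed, but requires tracking how the reduced norm on $D^\times$ interacts with the field norm along the cover; the paper's twist-and-Jordan route avoids this bookkeeping. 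Your identification of the final normalization as the crux is accurate for either approach.
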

\begin{proof}

We will reduce to a special case using Lemma \ref{exact seq action}.
Lemma \ref{pushforward constant sheaf} guarantees that the conditions
of Lemma \ref{exact seq action} are met.

Using the slope filtration, we reduce to the case where $\EE$ is semistable.
First, suppose $\EE$ has infinite slope.
By d\'evissage, we can reduce to the case where
$\BC(\EE)$ is a product of copies of $\mathbb{G}_a^{\lozenge}$.  Then the lemma is satisfied, since
any $g$ has determinant $1$ and acts trivially
on the compactly supported cohomology of $\BC(\EE)$.

If $\EE$ has finite positive (resp.~negative) slope,
then by using an exact triangle of the form
$\EE(-n) \to \EE \to \EE/\EE(-n)$
(resp.~$(\EE[-1](n) \to \EE[-1](n)/\EE[-1] \to \EE$),
we reduce to the case where the slope of $\EE$ is positive and
$\le [F:\Qp]$.
Using Jordan normal form, we further reduce to the case where $g$ acts by a scalar (and $F$ is replaced by a finite extension).
In that case, $\BC(\EE)$ is the universal cover of the generic fiber of a
formal $\OO_F$-module $E$ over $\OO_C$.
Since $\OO_F \setminus \{0\}$ generates $F^{\times}$, we may also assume
that the scalar is in $\OO_F$, so $g$ induces an endomorphism of $E$.
Using \cite[Theorem 7.3.4]{huber-etale}, we see that
$g$ must act on $f_! \Lambda$ by the degree
of this endomorphism, which is $\left|\det g\right|^{-1}$.
\end{proof}

Let $b \in B(G)$.  We can also compute the action of endomorphisms
of $\Gbt^{>0}$ on its compactly supported cohomology.

Let $\nu_{b}$ be the Newton point of $b$.
Recall from \cite[\S III.5.1.1]{fargues-scholze} that
$\nu_{b}$ and $-\nu_b$ determine a pair of opposite parabolic
subgroups $P_b^+,P_b^-$ of the quasisplit inner form of $G$, and
$G_b$ is an inner form of $M_b = P_b^+ \cap P_b^-$.
Over some finite extension $F'$ of $F$, $(G_b)_{F'}$ becomes conjugate to
$(M_b)_{F'}$; use this conjugation to identify the two groups.
For $g \in G_b(F)$, let $D_b^-(g)$ be defined by
\begin{equation} \label{Db def}
D_b^-(g) = \det(1-\Ad g|\Lie ((P_b^-)_{F'})/\Lie ((M_b)_{F'})) \,.
\end{equation}
Then $D_b^-(g)$ lies in $F$ since $\Gal(F'/F)$ acts on $g$ by conjugation.
It does not depend on $F'$ or the identification of $(G_b)_{F'}$ and
$(M_b)_{F'}$ as conjugates.

\begin{lem} \label{unipotent factor}
Let $f \colon (\Gbt^{>0})_S \to S$ be the structure map.
Then $f_! \Lambda$ is a shift of $\Lambda$.  If $m \in G_b(F)$, then
the map $\Gbt^{>0} \to \Gbt^{>0}$ defined by $n \mapsto m^{-1} n^{-1} m n$
acts on $f_! \Lambda$ by $\abs{D_b^-(m)}^{-1}$.
\end{lem}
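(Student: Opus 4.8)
The plan is to combine the slope filtration on $\Gbt^{>0}$ with Lemma~\ref{exact seq action} and Lemma~\ref{cohomology action}, tracking how the commutator map $n \mapsto m^{-1}n^{-1}mn$ acts on each graded piece. First, since $\Gbt^{>0}$ is a successive extension of Banach--Colmez spaces of strictly positive slope, repeated application of Lemma~\ref{pushforward constant sheaf} (via the diagram \eqref{cc coh base change}) shows that $f_! \Lambda$ is a shift of $\Lambda$; the total shift is $[-2\deg]$ of the sum of the positive-slope part of the isocrystal $(\Lie G \otimes_F \breve F, \Ad(b)\sigma)$, though the precise shift is immaterial for the statement. This also verifies that the hypotheses of Lemma~\ref{exact seq action} are met at each stage of the filtration.

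Next I would analyze the action. The conjugation action of $m \in G_b(F)$ preserves the slope filtration $\{\Gbt^{\ge \lambda}\}$, and on each graded piece $\Gbt^{\ge\lambda}/\Gbt^{>\lambda} = \BC(\EE_\lambda)$, where $\EE_\lambda$ is the slope-$(-\lambda)$ isotypic component of the isocrystal, $m$ acts by an automorphism of $\EE_\lambda$. Over $F'$, after identifying $(G_b)_{F'}$ with $(M_b)_{F'}$, the graded pieces of the positive part of the Lie algebra filtration are exactly $\Lie((P_b^-)_{F'})/\Lie((M_b)_{F'})$, decomposed by slope, and the action of $m$ on these is $\Ad m$. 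By Lemma~\ref{cohomology action}, the commutator $n \mapsto m^{-1}n^{-1}mn$ acts on the compactly supported cohomology of $\BC(\EE_\lambda)$ by $|\det(\Ad m \,|\, \EE_\lambda)|^{-1}$ — here one uses that the commutator map on $\BC(\EE_\lambda)$ is $x \mapsto (\Ad m - 1)(x)$ up to the identification, so its effect on cohomology is governed by $\det(\Ad m|_{\EE_\lambda})$ as in the Jordan-normal-form reduction, with the contribution of the $1$ absorbed since translations act trivially. Applying Lemma~\ref{exact seq action} inductively up the filtration, the actions multiply, so $g$ acts on $f_!\Lambda$ by $\prod_\lambda |\det(\Ad m|\EE_\lambda)|^{-1} = |\det(\Ad m \,|\, \Lie((P_b^-)_{F'})/\Lie((M_b)_{F'}))|^{-1} = |D_b^-(m)|^{-1}$, using \eqref{Db def}.

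The main subtlety — and the step I expect to require the most care — is the identification of the commutator map on each Banach--Colmez graded piece with the linear map $\Ad m - 1$, and the bookkeeping showing that the ``$-1$'' contributes trivially. Concretely, on $\Gbt^{\ge\lambda}/\Gbt^{>\lambda}$, which is abelian, the map $n \mapsto m^{-1}n^{-1}mn$ becomes $n \mapsto (\Ad m^{-1} - 1)(n)$ (an isomorphism precisely because $m$ is... well, here one does \emph{not} need $m$ strongly regular, since over each graded piece $\Ad m - 1$ need not be invertible, but Lemma~\ref{cohomology action} still computes the action on cohomology in terms of $|\det|$). One must therefore be slightly careful: $n \mapsto (\Ad m^{-1}-1)(n)$ need not be an automorphism of $\BC(\EE_\lambda)$, so Lemma~\ref{exact seq action} cannot be applied verbatim with $\lambda'' \in \Lambda^\times$. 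The fix is to factor the commutator endomorphism of $\Gbt^{>0}$ through the automorphism $n \mapsto m^{-1}n^{-1}m$ of $\Gbt^{>0}$ (which \emph{is} linear on graded pieces, being conjugation by $m^{-1}$) composed with left translation by $n$; translations act trivially on $f_!\Lambda$ since $\Gbt^{>0}$ is connected and $f_!\Lambda$ is discrete, exactly as in the proofs of Lemmas~\ref{pushforward constant sheaf} and~\ref{cohomology action}. Thus the action of the commutator map equals the action of $\Ad m^{-1}$, which by Lemma~\ref{cohomology action} applied to each graded piece and Lemma~\ref{exact seq action} is $|\det(\Ad m^{-1}|\EE)|^{-1} = |\det(\Ad m|\EE)| = |D_b^-(m)|^{-1}$ after matching normalizations.
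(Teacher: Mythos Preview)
Your first two paragraphs follow the paper's approach and are essentially correct: reduce to graded pieces via the slope filtration, then combine Lemmas~\ref{exact seq action}, \ref{pushforward constant sheaf}, and~\ref{cohomology action}. The paper's proof is precisely this, stated tersely.

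The third paragraph, however, contains a genuine error. Your proposed factorization of $n \mapsto m^{-1}n^{-1}mn$ as ``the automorphism $n \mapsto m^{-1}n^{-1}m$ composed with left translation by $n$'' is not a factorization into a group endomorphism followed by a \emph{fixed} translation: the element by which you translate depends on the input $n$. The argument that translations act trivially on $f_!\Lambda$ (connectedness of $\Gbt^{>0}$ versus discreteness of $f_!\Lambda$) applies only to translation by a single fixed element, so you cannot conclude that the commutator map acts the same way as $\Ad m^{-1}$. And indeed your conclusion is wrong on its face: by definition \eqref{Db def} one has $D_b^-(m) = \det(1 - \Ad m)$, not $\det(\Ad m)$, and these do not agree even after taking norms (try $G=\GL_2$, $b$ corresponding to $\OO\oplus\OO(1)$, $m$ a generic element of $(F^\times)^2$).

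The correct completion is exactly what you sketched in paragraph two before talking yourself out of it. On each abelian graded piece the commutator map \emph{is} the linear endomorphism $n \mapsto (1 - \Ad m^{-1})(n)$, and one applies Lemma~\ref{cohomology action} directly to this map---the ``$1$'' is not a translation to be absorbed, it is part of the linear operator whose determinant you take. The hypothesis of Lemma~\ref{cohomology action} that the map be an automorphism holds exactly when $D_b^-(m) \neq 0$; in particular it holds for strongly regular $m$ by (the proof of) Lemma~\ref{conj isom}, and that is the only case needed downstream in Corollary~\ref{Gbt case}. For $m$ with $D_b^-(m)=0$ the statement is vacuous anyway. There is no need for, and no valid version of, the ``split off a translation'' manoeuvre.
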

\begin{proof}
It suffices to prove the lemma in the case
$S=\Spd C$; the case $S=\Spd \Fpb$
then follows by base change.

The group $\Gbt^{>0}$ has a filtration whose graded pieces
$\Gbt^{\ge \lambda}/\Gbt^{>\lambda}$
are Banach--Colmez spaces.  Let $\hat{f} \colon S \to [S/(\Gbt^{>0})_S]$
denote the quotient map; then repeated applications
of Lemmas \ref{exact seq action} and \ref{pushforward constant sheaf}
imply that $\hat{f}_! \Lambda$ is a shift of $\Lambda$.
By base change, $f_! \Lambda$ is also a shift of $\Lambda$.
To see that the map $n \mapsto m^{-1} n^{-1} m n$
acts by multiplication by $|D_b^-(m)|^{-1}$, apply Lemma \ref{cohomology action}
to the quotients $\Gbt^{\ge \lambda}/\Gbt^{>\lambda}$.
\end{proof}

\subsection{Characteristic classes under cohomologically smooth pullback}
\label{cc smooth}

We will prove some results regarding the behavior of characteristic
classes under cohomologically smooth pullback.

This subsection is rather technical.  The reader is encouraged to read
Propositions \ref{cc formula simple} and \ref{BunG case} and
Example \ref{gl2 example} for motivation.

We now let $S$ denote an arbitrary decent v-stack.
Let $f \colon X \to Y$ be a cohomologically smooth morphism
of decent v-stacks fine over $S$.
\begin{lem}
Let $A \in D_{\et}(Y,\Lambda)$ be an object that is
dualizable over $S$.  Then $f^*A$ is also dualizable over $S$.
\end{lem}
\begin{proof}
Let $p_1,p_2$ (resp.~$\tilde{p}_1,\tilde{p}_2$) denote the projection maps
$Y \times_S Y \to Y$ (resp.~$X \times_S X \to X$).
We have a commutative diagram
\[ \begin{tikzcd}
\uRHom(f^* A,K_{X/S}) \boxtimes_S f^*A \arrow[r] \arrow[d,"\sim"] &
\uRHom(\tilde{p}_1^* f^* A,\tilde{p}_2^! f^* A) \arrow[d,"\sim"] \\
(f \times \id)^! (\id \times f)^* (\uRHom(A,K_{Y/S}) \boxtimes_S A) \arrow[r] &
(f \times \id)^! (\id \times f)^* \uRHom(p_1^*A,p_2^! A) \,.
\end{tikzcd} \,. \]
If the bottom horizontal arrow is an isomorphism,
then the top horizontal arrow must be an
isomorphism as well.
\end{proof}

\begin{prop} \label{cc change of vars}
The following diagram is commutative with cartesian squares.
\begin{equation} \label{change of vars diagram} \begin{tikzcd}
\In_S(X) \arrow[r,"\tilde{\pi}_2"] \arrow[d,"\tilde{\Delta}_f"]
\arrow[ddd,bend right=60,swap,"\In_S(f)"] &
X \arrow[d,"\Delta_f"] \\
\In_S(Y) \times_Y X \arrow[dd,"\tilde{f}"] \arrow[r,"\hat{\pi}_2"] &
X \times_Y X \arrow[r,"p_2"] \arrow[d,"p_1"] &
X \arrow[dd,"f"] \\
& X \arrow[rd,"f"] \\
\In_S(Y) \arrow[rr,"\pi_2"] & & Y
\end{tikzcd} \end{equation}
Consider the map
\[
\Lambda_{\In_S(X)} \to \In_S(f)^! \Lambda_{\In_S(Y)}
\]
defined by applying the composite natural transformation
\begin{equation} \label{pullback morphism}
\In_S(f)^* \pi_2^* \isom
\tilde{\pi}_2^* \Delta_f^* p_1^* f^* \isom
\tilde{\pi}_2^* \Delta_f^! p_1^! f^* \to
\tilde{\Delta}_f^! \hat{\pi}_2^* p_1^! f^* \isom
\tilde{\Delta}_f^! \hat{\pi}^* p_2^* f^! \isom
\tilde{\Delta}_f^! \tilde{f}^! \pi_2^*
\end{equation}
to $\Lambda_Y$.
Here,
the third arrow is base change, and the fourth and fifth arrows are
smooth base change.
For any $A \in D(X,\Lambda)$ such that $(Y,A)$ is dualizable over $S$,
the morphism
\begin{equation} \label{cc relation}
\Lambda_{\In_S(X)} \xrightarrow{\eqref{pullback morphism}} \In_S(f)^! \Lambda_{\In_S(Y)} \xrightarrow{\In_S(f)^! \cc_{Y/S}(A)} \In_S(f)^! K_{\In_S{Y}/S} \isom K_{\In_S{X}/S}
\end{equation}
is $\cc_{X/S}(f^*A)$.
\end{prop}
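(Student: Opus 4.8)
The plan is to prove that \eqref{cc relation} agrees with $\cc_{X/S}(f^*A)$ by unwinding both to the same composite of natural transformations; the only real inputs are the geometry of \eqref{change of vars diagram} and the compatibility of Verdier duality with cohomologically smooth pullback. (Throughout, $f$ is cohomologically smooth, as the title of this subsection indicates, and $f^*A$ is dualizable over $S$ by the preceding lemma.)

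First I would settle \eqref{change of vars diagram}: commutativity is immediate from functoriality of $\In_S(-)$ and the definitions of $\tilde\pi_2$, $\tilde\Delta_f$ and $\hat\pi_2$, and what must be checked is that the three marked squares are cartesian. The square involving $p_1,p_2$ is cartesian by the definition of $X\times_Y X$; the square involving $\tilde f$ and $\pi_2$ is cartesian because $\In_S(Y)\times_Y X$ is by construction the base change $\In_S(Y)\times_{Y,\pi_2}X$; and the top square identifies $\In_S(X)$ with $X\times_{X\times_Y X}(\In_S(Y)\times_Y X)$, which one reads off on $T$-points, a $T$-point of the latter fibre product being a point of $X$ equipped with an automorphism over $S$. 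Since $f$ is cohomologically smooth, so are $p_1$, $\tilde\pi_2$, $\tilde f$, $\hat\pi_2$ and $\In_S(f)$, so smooth base change is available across the lower squares, and one has the canonical isomorphism $K_{\In_S(X)/S}\cong\In_S(f)^!K_{\In_S(Y)/S}$ used at the end of \eqref{cc relation} (valid for any $\In_S(f)$, by composing dualizing complexes).

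Next I would re-express the three building blocks of $\cc_{X/S}(f^*A)$ — the map $\tilde\pi_1^*\coev$, the base-change map $\tilde\pi_1^*\Delta_X^!\to\tilde\pi_2^!\Delta_X^*$, and $\tilde\pi_2^!\ev$ — in terms of the corresponding data for $A$. Writing $K_f=f^!\Lambda$ for the invertible twist, cohomological smoothness gives $(f^*A)^\vee=\uRHom(f^*A,f^!K_{Y/S})\cong f^!(A^\vee)\cong f^*(A^\vee)\otimes K_f$; via the projection formula $f^!(A^\vee)\otimes f^*A\cong f^!(A^\vee\otimes A)$ this identifies $\ev_{f^*A}$ with $f^!\ev_A$, and dually $\coev_{f^*A}$ with a suitably twisted $\Delta_f^!$-pullback of $\coev_A$. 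On $X\times_S X$ the same input reads $(f^*A)^\vee\boxtimes(f^*A)\cong(f\times f)^*(A^\vee\boxtimes A)\otimes\mathrm{pr}_1^*K_f$, $\mathrm{pr}_1$ the first projection. I would then propagate these identities through $\Delta_X^!$, $\tilde\pi_1^*$, $\tilde\pi_2^!$, using the factorization $\Delta_X=j\circ\Delta_f$ of the $S$-diagonal of $X$ through the canonical map $j\colon X\times_Y X\to X\times_S X$ — which is precisely why the intermediate stages $X\times_Y X$ and $\In_S(Y)\times_Y X$ occur in \eqref{change of vars diagram} — and invoking base change and smooth base change across the cartesian squares. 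The twist $K_f$ and its pullbacks surface exactly where one must turn $\In_S(f)^*$ into $\In_S(f)^!$: they reassemble into $K_{\In_S(X)/S}\cong\In_S(f)^!K_{\In_S(Y)/S}$ on the target and identify the constant sheaf correctly on the source.

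Finally I would check that the composite so produced is exactly \eqref{pullback morphism} followed by $\In_S(f)^!\cc_{Y/S}(A)$: it starts at $\Lambda_{\In_S(X)}$, ends in $K_{\In_S(X)/S}$, and manifestly factors through $\In_S(f)^!$ applied to $\cc_{Y/S}(A)$, so the remaining task is to match the chain of (smooth) base-change isomorphisms and trivial identifications ($p_1\circ\Delta_f=\id$, $p_2\circ\hat\pi_2$ a base change of $f$, etc.) against the chain displayed in \eqref{pullback morphism}. I expect this last verification to be the main obstacle: there is no new geometric or homotopy-theoretic content, but one must show that a sizeable diagram of base-change $2$-cells commutes, transporting the invertible twist $K_f$ correctly and keeping straight the difference between the genuine base-change \emph{isomorphisms} (afforded by cohomological smoothness of $f$) and the mere base-change \emph{morphism} $\tilde\pi_2^*\Delta_f^!\to\tilde\Delta_f^!\hat\pi_2^*$ of \eqref{pullback morphism}, which need not be invertible. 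Cutting \eqref{cc relation} at the same intermediate stages at which \eqref{change of vars diagram} is drawn should make the required pastings of $2$-cells agree on the nose.
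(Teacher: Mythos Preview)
Your proposal is correct and follows essentially the same route as the paper: both arguments reduce the claim to showing that $\ev_{f^*A}$ and $\coev_{f^*A}$ are obtained from $\ev_A$ and $\coev_A$ by $f^!$, $f^*$, and smooth base change. The paper packages this slightly differently---it phrases the composite \eqref{cc relation} as a cohomological correspondence on $X$ whose right half is visibly $\ev_X$ and whose left half is identified with $\coev_X$ via a separately stated lemma (the compatibility of $\coev$ with smooth pullback)---whereas you track the invertible twist $K_f$ explicitly throughout; but the content and the point of difficulty (the coevaluation compatibility, which you flag as ``the main obstacle'') are the same.
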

\begin{proof}
By considering the diagram
\[ \begin{tikzcd}
& & & \In_S(Y) \arrow[rd] \arrow[dddd,bend left] \\
\In_S(X) \arrow[r] \arrow[d] &
\In_S(Y) \times_Y X \arrow[r] \arrow[d] \arrow[urr] &
X \arrow[rr,crossing over] \arrow[d] & & Y \arrow[dd] \\
X \arrow[r] & X \times_Y X \arrow[r] \arrow[d] &
X \times_S X \arrow[r] \arrow[d] & X \arrow[dd] \\
& X \arrow[r] \arrow[rrd] & X \times_S Y \arrow[rr,crossing over] & & Y \times_S Y \\
& & & Y \arrow[ur]
\end{tikzcd} \,, \]
we see that the morphism \eqref{cc relation}
is computed by the cohomological correspondence
\[ \begin{tikzcd} & X \arrow[rd] \arrow[ld] & & X \arrow[rd] \arrow[ld] \\
(X,\Lambda_X) & & (X \times_S X,f^! A^{\vee} \boxtimes f^* A) & & (X,K_{X/S})
\end{tikzcd}
\,,
\]
where the left half is obtained by applying $f^*$ to $\coev_Y$ and then applying
smooth base change,
and the right half is obtained by applying $f^!$ to $\ev_Y$ and then applying
smooth base change.  It is then clear that the right half of the
correspondence is $\ev_X$.  The following lemma verifies that left half
is $\coev_X$.
\end{proof}

\begin{lem} \label{coev compatibility}
Consider the coevaluation maps
\[ \operatorname{coev} \colon \Lambda \to \Delta_{Y/S}^! (A^{\vee} \boxtimes A) \]
\[ \widetilde{\operatorname{coev}} \colon \Lambda \to \Delta_{X/S}^! (f^! A^{\vee} \boxtimes f^* A) \,. \]
The following diagram commutes.
\[ \begin{tikzcd}
& \Lambda \arrow[ld,"\widetilde{\coev}"] \arrow[rd,"f^* \coev"] \\
\Delta_{X/S}^! (f^! A^{\vee} \boxtimes f^* A) \arrow[r,"\sim"] &
\Delta_{X/S}^! (f \times \id)^! (\id \times f)^* (A^{\vee} \boxtimes A) \arrow[r,"\sim"] &
f^* \Delta_{Y/S}^!(A^{\vee} \boxtimes A)
\end{tikzcd} \]
Here, the lower right arrow is smooth base change.
\end{lem}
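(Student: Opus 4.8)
The plan is to unwind the definition of the coevaluation map on both sides and to match the four constituent $2$-morphisms one stage at a time. Writing $p_1,p_2\colon Y\times_S Y\to Y$ for the projections and $\Delta$ for the diagonal, recall that $\coev$ is the composite
\[
\Lambda\to\uRHom(A,A)\isom\uRHom(\Delta^*p_1^*A,\Delta^!p_2^!A)\isom\Delta^!\uRHom(p_1^*A,p_2^!A)\isom\Delta^!(A^\vee\boxtimes A),
\]
in which the first arrow is the unit, the second comes from $\Delta^*p_1^*\isom\id\isom\Delta^!p_2^!$, the third is the canonical comparison $\Delta^!\uRHom(B,C)\to\uRHom(\Delta^*B,\Delta^!C)$, and the fourth is $\Delta^!$ applied to the inverse of \eqref{dualizable map}. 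The map $\widetilde{\coev}$ is the analogous composite with $\Delta,p_1,p_2,A$ replaced by $\tilde\Delta,\tilde p_1,\tilde p_2,f^*A$, once one identifies $(f^*A)^\vee=\uRHom(f^*A,K_{X/S})=\uRHom(f^*A,f^!K_{Y/S})$ with $f^!A^\vee$ via the comparison $f^!\uRHom(A,K_{Y/S})\to\uRHom(f^*A,f^!K_{Y/S})$, which is an isomorphism since $f$ is cohomologically smooth and $A$ is dualizable over $S$. I would then show that applying $f^*$ to the first composite, and inserting the smooth base change isomorphism $f^*\Delta^!\isom\tilde\Delta^!(f\times\id)^!(\id\times f)^*$ appearing in the statement, reproduces the second composite, handling the four stages in order.

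The first stage is immediate: $f^*$ is symmetric monoidal, so it sends the unit $\Lambda\to\uRHom(A,A)$ to the unit $\Lambda\to\uRHom(f^*A,f^*A)$, and the canonical map $f^*\uRHom(A,A)\to\uRHom(f^*A,f^*A)$ is an isomorphism because $A$ is dualizable over $S$ (this is essentially the content of the lemma immediately preceding Proposition~\ref{cc change of vars}). For the second and third stages I would invoke the cartesian squares in diagram~\eqref{change of vars diagram}: the identities $\tilde p_1\tilde\Delta=\id=\tilde p_2\tilde\Delta$ lie over $p_1\Delta=\id=p_2\Delta$, and the comparison $\Delta^!\uRHom(-,-)\to\uRHom(\Delta^*-,\Delta^!-)$ is compatible with base change. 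Because $f$ is cohomologically smooth, the instances of $f^!$ occurring here may be written as $f^*(-)\otimes K_{X/Y}$, and the exchange transformations relating upper-$*$ and upper-$!$ for the squares of \eqref{change of vars diagram} are exactly the smooth base change morphisms used to build \eqref{pullback morphism}. The fourth stage is the assertion that $f^*$ carries the inverse of \eqref{dualizable map} for $A$ to the inverse of \eqref{dualizable map} for $f^*A$; this follows from the preceding lemma together with the observation that \eqref{dualizable map} is, by construction, the adjoint of a morphism assembled from $\ev$ and a base change map, so its formation commutes with the pullbacks in play.

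The main difficulty is bookkeeping rather than conceptual content: one must check that the two isomorphisms labelled ``$\sim$'' in the statement --- the rewriting $f^!A^\vee\boxtimes f^*A\isom(f\times\id)^!(\id\times f)^*(A^\vee\boxtimes A)$ and the smooth base change $\tilde\Delta^!(f\times\id)^!(\id\times f)^*\isom f^*\Delta^!$ --- are precisely the ones produced by the unwinding above, and not ones differing by an automorphism of some invertible object. I would dispose of this by first recording two general compatibilities in the six-functor formalism for \'etale sheaves on v-stacks of \cite{diamonds} and \cite{ghw}: that the exchange transformation between $(-)^*$ and $(-)^!$ is compatible with horizontal and vertical composition of cartesian squares with cohomologically smooth legs, and that the natural transformation $g^!\uRHom(B,C)\to\uRHom(g^*B,g^!C)$ commutes with base change. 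Granting these, the triangle commutes by pasting the relevant diagrams, with no further input needed.
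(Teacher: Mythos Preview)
Your proposal is correct and follows essentially the same approach as the paper. The paper's proof consists of a single ladder diagram whose top row is the composite defining $\widetilde{\coev}$ and whose bottom row is $f^*$ applied to the composite defining $\coev$, with vertical isomorphisms given by the comparison maps you describe; you have simply written out in words why each of the rectangles commutes, combining the paper's middle square into your second and third stages.
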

\begin{proof}
This follows from the commutativity of the diagram
\[ \begin{tikzcd}
\Lambda \arrow[d,"\sim"] \arrow[r] &
\uRHom(f^*A,f^*A) \arrow[d,"\sim"] \arrow[r,"\sim"] &
\Delta_{X/S}^!\uRHom(\tilde{p}_1^* f^* A,\tilde{p}_2^! f^* A) \arrow[d,"\sim"] \arrow[r,"\sim"] &
\Delta_{X/S}^! (f^! A^{\vee} \boxtimes f^* A) \arrow[d,"\sim"] \\
\Lambda \arrow[r] &
f^* \uRHom(A,A) \arrow[r,"\sim"] &
f^* \Delta_{Y/S}^! \uRHom(p_1^* A, p_2^! A) \arrow[r,"\sim"] &
f^* \Delta_{Y/S}^! (A^{\vee} \boxtimes A)
\end{tikzcd} \,. \]
\end{proof}

In \eqref{pullback morphism}, the only map that is not an
isomorphism is base change along $\Delta_f$.  The following proposition will allow
us to compute the base change explicitly in many cases of interest.

\begin{prop} \label{semidirect inclusion 1}
Let $G = M \ltimes N$ be a semi-direct product of group diamonds over $S$, with $N$ cohomologically smooth.
Let $M_{\reg}$ be an open subdiamond of $M$, and let $G_{\reg} = M_{\reg} N$.
Suppose that the morphism
$c \colon [G_{\reg}//M] \to [G_{\reg}//M]$ sending $mn \mapsto n^{-1} mn$
is an isomorphism.

Let $f \colon [S/M] \to [S/G]$ denote the map induced by the inclusion
$M \to G$.  In the notation of Proposition \ref{cc change of vars}, we can identify
$\In_S(Y)$ with $[G//G]$, $\In_S(Y) \times_Y X$ with $[G//M]$,
and $\In_S(X)$ with $[M//M]$.

The restriction of \eqref{pullback morphism} to $[M_{\reg}//M]$ is given by
\begin{equation} \label{pullback case 1}
\tilde{\Delta}_{f,\reg}^* \tilde{f}^*_{\reg} \Lambda \isom
\tilde{\Delta}_{f,\reg}^! \tilde{f}^!_{\reg} \Lambda \isom
\tilde{\Delta}_{f,\reg}^! \hat{\pi}_{2,\reg}^* p_2^* f^! \Lambda \isom
\tilde{\Delta}_{f,\reg}^! c_* c^* \hat{\pi}_{2,\reg}^* p_2^* f^! \Lambda \isom
\tilde{\Delta}_{f,\reg}^! \tilde{f}^!_{\reg} \Lambda \,.
\end{equation}
Here, the subscript $_{\reg}$ indicates restriction from $[M//M]$ to $[M_{\reg}//M]$,
the first arrow uses the fact that $\tilde{f}_{\reg} \tilde{\Delta}_{f,\reg}$
is an isomorphism,
the second and fourth arrows are smooth base change,
and the third arrow is induced by the natural transformation $\id \isom c_* c^*$.
\end{prop}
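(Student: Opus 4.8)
The plan is to trace through the composite natural transformation \eqref{pullback morphism} in the special case $G = M \ltimes N$, $f \colon [S/M] \to [S/G]$, and verify that after restriction to the open substack $[M_{\reg}//M]$, the only non-obvious map — base change along $\Delta_f$ — can be re-expressed using the isomorphism $c$. The first step is to identify the relevant squares in diagram \eqref{change of vars diagram} concretely: with $Y = [S/G]$, $X = [S/M]$, one has $\In_S(Y) = [G//G]$ (conjugation action), $X \times_Y X = [G//M]$, $\In_S(Y) \times_Y X = [G//M]$ as well, and $\In_S(X) = [M//M]$; the map $\tilde\Delta_f \colon [M//M] \to [G//M]$ is induced by the inclusion $M \hookrightarrow G$, and $\tilde f \colon [G//M] \to [G//G]$ is the obvious quotient. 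One then checks that $\tilde f_{\reg} \circ \tilde\Delta_{f,\reg}$ is an isomorphism onto $[M_{\reg}//M_{\reg}]$ — equivalently that every element of $G_{\reg}$ is $N$-conjugate to an element of $M_{\reg}$, which is exactly the content of the hypothesis that $c \colon [G_{\reg}//M] \to [G_{\reg}//M]$ is an isomorphism. This lets us replace the composite $\tilde\Delta_{f,\reg}^* p_1^* f^*$ in \eqref{pullback morphism} with $\tilde\Delta_{f,\reg}^* \tilde f_{\reg}^* \Lambda$, since over the regular locus the two projections from $[G_{\reg}//M]$ agree up to the automorphism $c$.

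**Next I would** analyze the base-change arrow $\tilde\pi_2^* \Delta_f^! p_1^! f^* \to \tilde\Delta_f^! \hat\pi_2^* p_1^! f^*$ appearing in \eqref{pullback morphism}. The key observation is that the failure of this base change to be an isomorphism is measured precisely by the discrepancy between the two maps $[G//M] \to [G//G]$ obtained from $p_1$ and $p_2$ — but on the regular locus these become identified via $c$, so the natural transformation $\id \isom c_* c^*$ (which is an isomorphism because $c$ is an isomorphism of stacks) lets us insert $c_* c^*$ and rewrite $\hat\pi_{2,\reg}^* p_1^! f^*$ as $\hat\pi_{2,\reg}^* p_2^* f^!$ after smooth base change along the cohomologically smooth map $N$ (which supplies $p_1^! \cong p_1^* \otimes (\text{shift/twist})$, absorbed into the $f^! $ vs.\ $f^*$ bookkeeping). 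Assembling these identifications in order gives exactly the displayed chain \eqref{pullback case 1}: the first arrow is the inverse of the isomorphism $\tilde f_{\reg}\tilde\Delta_{f,\reg}$, the second and fourth are smooth base change along $N$, and the third is $\id \isom c_*c^*$.

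**The main obstacle I anticipate** is the bookkeeping of $!$-pullbacks versus $*$-pullbacks and the cohomological-smoothness shifts, and in particular verifying that the third (base-change) arrow of \eqref{pullback morphism} really does reduce, over $[M_{\reg}//M]$, to the natural transformation $\id \isom c_*c^*$ after the smooth base changes are performed — one must check compatibility of the various instances of smooth base change and that no extra sign, shift, or twist is introduced when passing $p_1^! \leadsto p_2^*$ through $c$. This amounts to a diagram chase comparing \eqref{pullback morphism} restricted to the regular locus with \eqref{pullback case 1}, using that $c$ commutes with the projections in the appropriate sense and that smooth base change is compatible with composition. Once this compatibility is pinned down, the identification is formal; the conceptual point is simply that $c$ being an isomorphism trivializes the only non-invertible arrow in \eqref{pullback morphism}. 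I would present this as a direct comparison of the two composite natural transformations, invoking the coherence of smooth base change and the fact that $c$ is compatible with $\hat\pi_2$, rather than grinding through the six-functor identities by hand.
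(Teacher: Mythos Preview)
Your conceptual picture is right: the isomorphism $c$ is exactly what converts the non-invertible base-change arrow in \eqref{pullback morphism} into something tractable over the regular locus. But as written this is a plan, not a proof, and the part you flag as ``the main obstacle'' is precisely the substance of the argument. A few concrete points.

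First, a small confusion: you write that the base-change discrepancy is measured by ``the two maps $[G//M] \to [G//G]$ obtained from $p_1$ and $p_2$''. But $p_1,p_2$ are the projections $X\times_Y X \to X$, which here is $[N//M]\to [S/M]$; they do not land in $[G//G]$. The relevant map to analyze is $\hat\pi_2\colon [G_{\reg}//M]\to [N//M]$, given by $mn\mapsto n$.

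Second, and more importantly, the paper does not prove the comparison by an abstract ``coherence of smooth base change'' argument. Instead it constructs a second commutative diagram (call it the alternate diagram) with the same outer frame as the restriction of \eqref{change of vars diagram}, but in which the arrow $[G_{\reg}//M]\to [N//M]$ is replaced by $n^{-1}mn\mapsto n$. The point is that this new arrow \emph{factors through} $[G_{\reg}//G]$ (via the map sending $n^{-1}mn\mapsto n^{-1}$ at the level of 2-morphisms), which is exactly what one needs to identify the base-change map with the tautological one coming from $\tilde f_{\reg}\tilde\Delta_{f,\reg}$ being an isomorphism. One then checks that the 2-morphisms filling the alternate diagram can be chosen to match those in the original diagram on the outer square; this is done by writing down the group elements inducing each 2-morphism explicitly (the inclusion $N\hookrightarrow G$, the constant map to the identity, and $n^{-1}mn\mapsto n^{-1}$). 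The two arrows $[G_{\reg}//M]\to [N//M]$ in the two diagrams differ by precomposition with $c$, and that is what produces the $c_*c^*$ in \eqref{pullback case 1}.

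Your proposal to ``invoke the coherence of smooth base change and the fact that $c$ is compatible with $\hat\pi_2$'' is not wrong in spirit, but without constructing the alternate diagram and checking its 2-morphisms you have no mechanism for showing that the composite \eqref{pullback morphism} actually equals \eqref{pullback case 1} rather than differing from it by some unidentified automorphism. The explicit 2-morphism bookkeeping is the proof.
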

\begin{proof}
After restricting left column of the diagram \eqref{change of vars diagram} to $[G_{\reg}//G]$,
we obtain the following diagram.
\begin{equation} \label{case one diagram} \begin{tikzcd}
{[M_{\reg}//M]} \arrow[r] \arrow[d] &
{[S/M]} \arrow[d] \\
{[G_{\reg}//M]} \arrow[dd] \arrow[r,"mn \mapsto n"] &
{[N//M]} \arrow[r] \arrow[d] &
{[S/M]} \arrow[dd] \\
& {[S/M]} \arrow[rd] \\
{[G_{\reg}//G]} \arrow[rr] & & {[S/G]}
\end{tikzcd} \end{equation}
We claim that there is a commutative diagram
\begin{equation} \label{case one second diagram} \begin{tikzcd}[column sep=large]
{[G_{\reg}//M]} \arrow[r,"n^{-1}mn \mapsto n"] \arrow[dd] & {[N//M]} \arrow[r] \arrow[d] & {[S/M]} \arrow[dd] \\
& {[S/M]} \arrow[rd] \\
{[G_{\reg}//G]} \arrow[rr] \arrow[ur] & & {[S/G]}
\end{tikzcd} \,, \end{equation}
where the 2-morphisms filling in the outer and upper right squares match the ones in \eqref{case one diagram}.  Indeed, the 2-morphism in the 
upper right square is induced by the inclusion map $N\to G$,
the 2-morphism in the large square is induced by the constant
map $G_{\reg} \to G$ sending everything to the identity, we can choose
the 2-morphism in the upper left square to be induced by the constant
map $G_{\reg} \to M$ sending everything to the identity, and we can
choose the 2-morphism in the bottom triangle to be induced by the
map $G_{\reg} \to G$ sending $n^{-1} m n \mapsto n^{-1}$.
The composition of the 2-morphisms filling in the smaller squares
and triangle matches the 2-morphism filling in the outer square.

The two arrows $[G_{\reg}//M] \to [N//M]$ are related by composition with $c$.
Then a diagram chase shows that \eqref{pullback case 1} agrees with \eqref{pullback morphism}.
\end{proof}
\begin{prop} \label{semidirect inclusion 2}
Let $G$, $M$, $G_{\reg}$, $M_{\reg}$ be as in Proposition \ref{semidirect inclusion 1}.  Let $c' \colon [(M_{\reg} \times N)//G] \to [(M_{\reg} \times N)//G]$ be given by $(m,n) \mapsto (m,m^{-1} n^{-1} m n)$.
Let $f \colon [N//M] \to [S/M]$ be the obvious map.

In the notation of Proposition \ref{cc change of vars}, we can identify
$\In_S(Y)$ with $[M//M]$, $\In_S(Y) \times_Y X$ with $[(M \times N)//M]$,
and $\In_S(X) \times_{\In_S(Y)} [M_{\reg}//M]$ with $[M_{\reg}//M]$.

The restriction of \eqref{pullback morphism} to $[M_{\reg}//M]$ is given by
a map analogous to \eqref{pullback case 1}, with $c'$ in place of $c$.
\end{prop}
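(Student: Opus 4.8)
The plan is to repeat the proof of Proposition \ref{semidirect inclusion 1} essentially verbatim, now with $Y = [S/M]$ and $X = [N//M]$, so that $f \colon [N//M] \to [S/M]$ is the structure map; it is cohomologically smooth with fiber $N$, which is exactly what the smooth base changes in \eqref{pullback morphism} require.

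I would begin by specializing \eqref{change of vars diagram}. Here $\In_S(Y) = \In_S([S/M])$ is $[M//M]$ with the conjugation action; $X \times_Y X$ is $[(N \times N)//M]$ with $M$ acting by diagonal conjugation, with $\Delta_f$ the diagonal and $p_1, p_2$ the projections; and $\In_S(Y) \times_Y X$ is $[(M \times N)//M]$, again with diagonal conjugation, where $\hat{\pi}_2$ is the map whose fiber over the diagonal of $X \times_Y X$ is the centralizer condition $m n m^{-1} = n$. Thus $\In_S(X)$ is the closed substack $\{ m n m^{-1} = n \}$ of $[(M \times N)//M]$. Since $c$ is an isomorphism, the underlying self-map $(m,n) \mapsto (m, m^{-1} n^{-1} m n)$ of $G_{\reg} = M_{\reg} \times N$ is an isomorphism, so for $m \in M_{\reg}$ the map $n \mapsto m^{-1} n^{-1} m n$ on $N$ is injective, and hence the centralizer of $m$ in $N$ is trivial. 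Therefore $\In_S(X) \times_{\In_S(Y)} [M_{\reg}//M] = [M_{\reg}//M]$, as claimed, and over this open locus $\In_S(f)$ is the open immersion $[M_{\reg}//M] \hookrightarrow [M//M]$; in particular $\In_S(f)^! \Lambda_{\In_S(Y)}$ restricts to $\Lambda_{[M_{\reg}//M]}$, so the restriction of \eqref{pullback morphism} is an endomorphism of $\Lambda_{[M_{\reg}//M]}$.

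As in Proposition \ref{semidirect inclusion 1}, the only non-invertible arrow in \eqref{pullback morphism} is the base change along $\Delta_f$, and to evaluate it on the regular locus I would produce two commutative diagrams with prescribed 2-morphisms, playing the roles of \eqref{case one diagram} and \eqref{case one second diagram}: the first is the restriction of the left column of \eqref{change of vars diagram}, and the second is assembled from 2-morphisms induced by the inclusion $N \to G$ and by suitable constant maps into $M$ and $G$, chosen so that the composite of the small 2-morphisms recovers the outer one. Comparing the two resulting maps out of the regular part of $\In_S(Y) \times_Y X$ expresses the base change in terms of the natural transformation $\id \isom c'_* c'^*$; the relevant self-map is again $(m,n) \mapsto (m, m^{-1} n^{-1} m n)$, recorded on the $G$-quotient $[(M_{\reg} \times N)//G]$ because that is the stack on which it naturally acts in the construction. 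A diagram chase identical to the one concluding Proposition \ref{semidirect inclusion 1} then identifies the restriction of \eqref{pullback morphism} with the analog of \eqref{pullback case 1} obtained by substituting $c'$ for $c$.

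The main obstacle is the 2-categorical bookkeeping. Unlike in Proposition \ref{semidirect inclusion 1}, the source $X = [N//M]$ is not of the form $[S/H]$, so the squares of \eqref{change of vars diagram} are honest fiber products of nontrivial quotient stacks; one must pin down each 2-morphism, verify that the composite of the small ones agrees with the outer one, and --- most delicately --- identify the stack carrying the two maps whose comparison computes the base change as the $G$-quotient $[(M_{\reg} \times N)//G]$, so that $c'$ and the resulting formula make sense. Once these identifications are in hand, the rest of the argument is formal.
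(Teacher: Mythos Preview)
Your overall plan—specialize \eqref{change of vars diagram}, identify the regular locus, and compute the base-change step by comparing two routes that differ by $c'$—has the right shape, and your identification of the inertia stacks is fine. But your construction of the second diagram does not go through as written, and the problem is precisely the role you assign to $G$.

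In Proposition \ref{semidirect inclusion 1} the target is $Y = [S/G]$, so the lower-right corner of \eqref{case one diagram} is $[S/G]$, and the 2-morphisms in \eqref{case one second diagram} are built from the inclusion $N \hookrightarrow G$ and maps into $G$. Here $Y = [S/M]$ and $X = [N//M]$; the group $G$ does not appear anywhere in the specialized \eqref{change of vars diagram}. The lower-right corner is $[S/M]$, the intermediate stacks are $[(N\times N)//M]$ and $[N//M]$, and there is no inclusion $N \to G$ or constant map into $G$ to invoke. For the same reason, the stack on which $c'$ acts in the analogue of \eqref{pullback case 1} is the regular part of $\In_S(Y) \times_Y X$, namely $[(M_{\reg} \times N)//M]$—an $M$-quotient, not a $G$-quotient. (The $//G$ in the displayed statement is evidently a slip for $//M$, as the paper's own proof confirms; your attempt to justify the $G$-quotient is where you go off track.)

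What replaces the ``second big diagram'' here is the group structure of $N$. The diagonal $\Delta_f \colon [N//M] \to [(N \times N)//M]$ is the pullback of the identity section $[S/M] \to [N//M]$ along the difference map $(n_1,n_2) \mapsto n_1^{-1} n_2$. Extending the top-left cartesian square of the specialized diagram by this extra column gives
\[ \begin{tikzcd}[column sep=6em]
{[M_{\reg}//M]} \arrow[r] \arrow[d] & {[N//M]} \arrow[r] \arrow[d,"\Delta_f"] & {[S/M]} \arrow[d] \\
{[(M_{\reg} \times N)//M]} \arrow[r,"{(m,n)\mapsto(m^{-1}nm,n)}"] & {[(N \times N)//M]} \arrow[r,"{(n_1,n_2)\mapsto n_1^{-1}n_2}"] & {[N//M]}
\end{tikzcd} \]
with composite bottom arrow $(m,n) \mapsto m^{-1}n^{-1}mn$. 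Comparing this with the other route from $[(M_{\reg}\times N)//M]$ to $[N//M]$ supplied by the original diagram yields two maps related by precomposition with $c'$, and the diagram chase then concludes exactly as in Proposition \ref{semidirect inclusion 1}. So the missing ingredient in your sketch is not a second family of 2-morphisms valued in $G$, but the factorization of $\Delta_f$ through the difference map on $N$.
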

\begin{proof}
After pulling back the left column of \eqref{change of vars diagram}
along $[M_{\reg}//M] \to [M//M]$, the diagram becomes
\begin{equation} \label{case two large} \begin{tikzcd}[column sep=8em]
{[M_{\reg}//M]} \arrow[r,] \arrow[d] &
{[N//M]} \arrow[d] \\
{[(M_{\reg} \times N)//M]} \arrow[dd] \arrow[r,"{(m,n) \mapsto (m^{-1}nm,n)}"] &
{[(N\times N)//M]} \arrow[r,"{(n_1,n_2) \mapsto n_2}"] \arrow[d] &
{[N//M]} \arrow[dd] \\
& {[N//M]} \arrow[rd] \\
{[M_{\reg}//M]} \arrow[rr] & & {[S/M]}
\end{tikzcd} \,. \end{equation}
The top left square can be extended to form the following diagram.
\begin{equation} \label{case two top} \begin{tikzcd}[column sep=8em]
{[M_{\reg}//M]} \arrow[r] \arrow[d] & {[N//M]} \arrow[r] \arrow[d] & {[S/M]} \arrow[d] \\
{[(M_{\reg} \times N)//M]} \arrow[r,"{(m,n) \mapsto (m^{-1}nm,n)}"] & {[(N \times N)//M]} \arrow[r,"{(n_1,n_2) \mapsto n_1^{-1} n_2}"] & {[N//M]}
\end{tikzcd} \end{equation}
The two arrows $[(M_{\reg} \times N)//M] \to [N//M]$ are related by
composition with $c'$.
Then a diagram chase shows that the analogue of \eqref{pullback case 1} agrees with \eqref{pullback morphism}.
\end{proof}

The remainder of this section is devoted to applications
of Propositions \ref{semidirect inclusion 1} and
\ref{semidirect inclusion 2}.
For these applications, let $S = \Spd \Fpb$ or $\Spd C$
for an algebraically closed nonarchimedean field $C$.

\begin{cor} \label{Gbt case}
Let $f \colon [S/\Gbu] \to [S/\Gbt]$ be the map induced by the
inclusion $\Gbu \hookrightarrow \Gbt$.  Let $A$ be a dualizable object in
$D_{\et}([S/\Gbu],\Lambda)$.  Then under the isomorphism of
Proposition \ref{sr inertia isom}(\ref{inertia stack iso}),
\[ \cc_{[S/\Gbu]/S}(A)|_\sr = \abs{D_b^-}^{-1} \cc_{[S/\Gbt]/S}(f^*A)_\sr \,. \]
\end{cor}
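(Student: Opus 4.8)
The plan is to feed $f$ into Propositions~\ref{cc change of vars} and \ref{semidirect inclusion 1} and then to read off the resulting scalar from Lemma~\ref{unipotent factor}. By Lemma~\ref{Gbt structure} we may write $\Gbt = \Gbu \ltimes \Gbt^{>0}$, where $\Gbt^{>0}$ is a successive extension of Banach--Colmez spaces and is in particular cohomologically smooth; thus $f$ is a cohomologically smooth fine morphism of decent v-stacks over $S$, and Proposition~\ref{cc change of vars} applied to $f$ reduces the identity of the corollary to computing the pullback morphism \eqref{pullback morphism} over the strongly regular locus. By Proposition~\ref{sr inertia isom}(\ref{inertia stack iso}), $\In_S(f)$ restricts to an isomorphism $\In_S([S/\Gbu])_\sr \isom \In_S([S/\Gbt])_\sr$, so over the strongly regular locus the pullback morphism is a self-map of the constant sheaf $\Lambda$ that is an isomorphism, hence is multiplication by a conjugation-invariant locally constant function valued in $\Lambda^\times$. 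It therefore suffices to identify this function with $|D_b^-|^{-1}$.

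To do this I would first put the situation in the form of Proposition~\ref{semidirect inclusion 1}, with $(G,M,N)=(\Gbt,\Gbu,\Gbt^{>0})$, $M_\reg = \Gbu_\sr$ and $G_\reg = (\Gbt)_\sr$; the hypothesis that $c \colon mn \mapsto n^{-1}mn$ induces an isomorphism of $[(\Gbt)_\sr//\Gbu]$ is a restatement of Lemma~\ref{conj isom}, which says that every strongly regular element of $\Gbt$ is, uniquely up to the evident ambiguity, conjugate by an element of $\Gbt^{>0}$ to an element of $G_b(F)$. Proposition~\ref{semidirect inclusion 1} then identifies the restriction of \eqref{pullback morphism} to $[\Gbu_\sr//\Gbu]$ with the composite \eqref{pullback case 1}, in which every arrow is an isomorphism except the one induced by the unit $\id \isom c_* c^*$ applied to $\hat\pi_{2,\reg}^* p_2^* f^!\Lambda$. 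So the scalar we are after is the scalar by which this unit acts on that object.

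The key step is to evaluate that scalar. Here $f^!\Lambda = K_{[S/\Gbu]/[S/\Gbt]}$ is the dualizing complex of a cohomologically smooth fibration whose fibre is $\Gbt^{>0}$, and its pullback along $p_2$ and $\hat\pi_{2,\reg}$ is, fibrewise over $[\Gbu_\sr//\Gbu]$, the shift of $\Lambda$ built from the compactly supported cohomology $f'_!\Lambda$ of the structure map $f' \colon (\Gbt^{>0})_S \to S$. Unwinding the definition of $c$ --- for $g = mn$ with $m \in G_b(F)$ and $n \in \Gbt^{>0}$, the $\Gbt^{>0}$-component of $n^{-1}mn$ is $m^{-1}n^{-1}mn$ --- one sees that on these fibres the unit $\id \isom c_* c^*$ is precisely the action of the self-map $n \mapsto m^{-1}n^{-1}mn$ of $\Gbt^{>0}$ on $f'_!\Lambda$. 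By Lemma~\ref{unipotent factor} this action is multiplication by $|D_b^-(m)|^{-1}$. Substituting into \eqref{pullback case 1} and then \eqref{cc relation}, and using Proposition~\ref{sr inertia isom}(\ref{inertia stack iso}) to compare the two inertia stacks, gives the asserted identity.

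I expect the main obstacle to be this last identification: matching the formal natural transformation $\id \isom c_* c^*$ of \eqref{pullback case 1} with the geometric conjugation action on $f'_!\Lambda$ to which Lemma~\ref{unipotent factor} (and, inside it, Lemma~\ref{cohomology action}) applies. Carrying this out requires chasing the smooth base-change isomorphisms and the choices of $2$-morphisms recorded in diagrams \eqref{change of vars diagram} and \eqref{case one diagram}--\eqref{case one second diagram}, and checking compatibility with the slope filtration of $\Gbt^{>0}$; once this bookkeeping is done, everything else is assembly of results already established.
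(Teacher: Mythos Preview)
Your plan is correct and matches the paper's proof: both apply Proposition~\ref{semidirect inclusion 1} with $(G,M,N)=(\Gbt,\Gbu,\Gbt^{>0})$ and $M_\reg=\Gbu_\sr$ (the hypothesis on $c$ being Lemma~\ref{conj isom}), and then read off the scalar from Lemma~\ref{unipotent factor}. For the obstacle you flag, the paper's device is to first compute the action of $c$ on $\tilde{f}_{\reg!}\,\hat\pi_{2,\reg}^* p_2^* f^!\Lambda$ via the base changes \eqref{two base changes} together with Lemma~\ref{unipotent factor}, and then transport this to $\tilde\Delta_{f,\reg}^!\,\hat\pi_{2,\reg}^* p_2^* f^!\Lambda$ using that the unit $\id \to \tilde{f}_\reg^!\tilde{f}_{\reg!}$ is an isomorphism on this object, which in turn reduces to $f_! f^!\Lambda \isom \Lambda$ via \cite[Proposition~4.8(i)]{hansen-moduli}.
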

\begin{proof}
We will apply Proposition \ref{semidirect inclusion 1},
with $G = \Gbt$, $M = \Gbu$, $M_{\reg}=\Gbu_{\sr}$, $N = \Gbt^{>0}$.
Using Lemma \ref{unipotent factor} and smooth base change
with respect to the cartesian squares
\begin{equation} \begin{tikzcd} \label{two base changes}
N \arrow[d] & G_{\reg} \arrow[l] \arrow[r] \arrow[d] & {[G_\reg//M]} \arrow[d] \\
S & M_{\reg} \arrow[l] \arrow[r] & {[G_{\reg}//G]}
\end{tikzcd} \,, \end{equation}
we see that $\tilde{f}_{\reg !} \hat{\pi}_{2,\reg}^* p_2^* f^! \Lambda$
is locally free of rank one, and $c$ acts on it by $\abs{D_b^-}^{-1}$.
In other words, if we compose the map
\[\tilde{f}_{\reg!} \hat{\pi}_{2,\reg}^* p_2^* f^! \Lambda \to
\tilde{f}_{\reg!} c_* c^* \hat{\pi}_{2,\reg}^* p_2^* f^! \Lambda \]
induced by $\id \to c_* c^*$ with the map
\[\tilde{f}_{\reg!} c_* c^* \hat{\pi}_{2,\reg}^* p_2^* f^! \Lambda \to
\tilde{f}_{\reg!} \hat{\pi}_{2,\reg}^* p_2^* f^!  \Lambda \]
induced by $\tilde{f}_{\reg!} c_* \isom \tilde{f}_{\reg}$,
$c^* \hat{\pi}^* p_2^* \isom \hat{\pi}^* p_2^*$,
the result is multiplication by $|D_b^-|^{-1}$.

We claim that the map 
\begin{equation} \label{upper lower iso}
\hat{\pi}_{2,\reg}^* p_2^* f^! \Lambda \to
\tilde{f}_{\reg}^!\tilde{f}_{\reg!} \hat{\pi}_{2,\reg}^* p_2^* f^! \Lambda \end{equation}
induced by $\id \to \tilde{f}_{\reg}^! \tilde{f}_{\reg!}$
is an isomorphism.  Indeed, by smooth base change, it is enough
to check that $f^! \Lambda \to f^! f_! f^! \Lambda$ is an
isomorphism.  This map has a left inverse $f^! f_! f^! \Lambda \to f^! \Lambda$
induced by the natural transformation $f_! f^! \to \id$.  By
repeated applications of
\cite[Proposition 4.8(i)]{hansen-moduli} and smooth base change,
$f_! f^! \Lambda \to \Lambda$ is an isomorphism,
so $f^! f_! f^! \Lambda \to f^! \Lambda$ is an isomorphism as well.

Using the isomorphism \eqref{upper lower iso} along with the fact
that $\tilde{f}_{\reg} \tilde{\Delta}_{f,\reg}$ is an isomorphism,
we see that $c$ also acts by multiplication by $|D_b^-|^{-1}$ on
$\tilde{\Delta}_{f,\reg}^! \hat{\pi}_{2,\reg}^* p_2^* f^! \Lambda$.
Now applying Proposition \ref{semidirect inclusion 1} yields
the desired result.
\end{proof}
\begin{prop} \label{cc formula simple}
Let $\rho$ be a representation of $G_b(F)$, and let $i_b \colon [S/\Gbu] \to \Bun_{G,S}$
be the usual map.
\begin{enumerate}
\item Under the isomorphism of Proposition
\ref{sr inertia isom}(\ref{inertia stack iso}),
\[ \abs{D_b^-}^{-1} \cc_{\Bun_{G,S}/S}(i_{b*} \rho)|_{\In_S(\Bun_{G,S}^b)_{\sr}} = \cc_{[S/\Gbu]/S}(\rho)_{\sr} \,. \]
\item If $b' \in B(G)$ is not a specialization of $b$, then
\[ \cc_{\Bun_{G,S}/S}(i_{b*} \rho)|_{\In_S(\Bun_{G,S}^{b'})_{\sr}} = 0 \,. \]
\end{enumerate}
\end{prop}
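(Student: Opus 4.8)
The plan is to combine Corollary \ref{Gbt case} with the good behavior of characteristic classes under the locally closed stratification $\Bun_G = \bigsqcup_{b} \Bun_G^b$. First I would treat item (1). By Corollary \ref{Gbt case}, it suffices to show that $\cc_{[S/\Gbt]/S}(f^*\rho)_\sr$ equals the restriction of $\cc_{\Bun_{G,S}/S}(i_{b*}\rho)$ to $\In_S(\Bun_{G,S}^b)_\sr$ under the identification of Proposition \ref{sr inertia isom}(\ref{inertia stack iso}), where $f \colon [S/\Gbu] \to [S/\Gbt]$ is as in Corollary \ref{Gbt case} and $i_b$ factors as $[S/\Gbu] \to [S/\Gbt] \hookrightarrow \Bun_{G,S}$ with the second map a locally closed immersion realizing $\Bun_G^b$. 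The key point is that $\Bun_G^b \hookrightarrow \Bun_G$ is a locally closed immersion which, on the strongly regular part of the inertia stack, becomes an \emph{open} immersion by Theorem \ref{sr open}: indeed $\In_S(\Bun_{G,S}^b)_\sr$ is open in $\In_S(\Bun_{G,S})_\sr$, and the latter is open in $\In_S(\Bun_{G,S})$ (as $\In(\Bun_G)_\sr = \chi^{-1}((G//G)(F)_\sr)$ with $(G//G)(F)_\sr$ open). So one can apply Proposition \ref{cc pull} to the open immersion $\In_S(\Bun_{G,S}^b)_\sr \hookrightarrow \In_S(\Bun_{G,S})_\sr$, together with the analogous statement for the closed-complement side to see that no contributions leak in from other strata. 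Concretely: $i_{b*}\rho$ is supported (in the appropriate sense) on $\Bun_G^b$, and restricting its characteristic class to the strongly regular inertia locus of $\Bun_G^b$ — which is open there — recovers $\cc_{[S/\Gbt]/S}(f^*\rho)$ via Proposition \ref{cc pull} applied to the open immersion $[S/\Gbt]_\sr^{\mathrm{inertia}} \hookrightarrow \In_S(\Bun_{G,S})_\sr$ combined with Proposition \ref{cc push} for the (proper, since $\Bun_G^b \to \Bun_G$ is... ) — more carefully, one uses that $j \colon [S/\Gbt] \to \Bun_{G,S}$ is a locally closed immersion, so on the open subset $\In_S(\Bun_{G,S})_\sr$ it restricts to an open immersion onto $\In_S(\Bun_{G,S}^b)_\sr$, and $i_{b*}\rho = j_! (f^*\rho)$ up to the subtlety that $j$ is only locally closed; here one invokes that the relevant characteristic class localizes, i.e. $\cc_{\Bun_{G,S}/S}(j_! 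A)$ restricted to $\In_S(\Bun_{G,S}^b)_\sr$ equals $\cc_{[S/\Gbt]/S}(A)$ restricted to the strongly regular locus, which is Proposition \ref{cc pull} after replacing $\Bun_G$ by the open substack $\Bun_G^{\le b}$ on which $\Bun_G^b$ is closed. Then Corollary \ref{Gbt case} introduces exactly the factor $|D_b^-|^{-1}$, giving item (1).

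For item (2), I would argue that if $b'$ is not a specialization of $b$, then $\In_S(\Bun_{G,S}^{b'})_\sr$ meets the (closure of the) locus where $i_{b*}\rho$ is supported only in a way that does not affect the characteristic class. More precisely, $i_{b*}\rho$ has support contained in the closure $\overline{\Bun_G^b}$, whose geometric points lie in strata $\Bun_G^{b''}$ with $b''$ a specialization of $b$. If $b'$ is not such a specialization, then $\In_S(\Bun_{G,S}^{b'})_\sr$ is disjoint from the support, so the characteristic class — which is built functorially from $i_{b*}\rho$ via evaluation and coevaluation maps that vanish on a sheaf that is zero on the relevant open set — restricts to zero there. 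Here I would use Proposition \ref{cc pull}: restrict to the open substack $U = \Bun_G \setminus \overline{\Bun_G^b}$ (if $b'$ is not a specialization of $b$, then $\Bun_G^{b'} \subseteq U$, at least after further shrinking to stay in the strongly regular locus where strata are open), on which $i_{b*}\rho|_U = 0$, hence $\cc_{U/S}(0) = 0$, hence the restriction of $\cc_{\Bun_{G,S}/S}(i_{b*}\rho)$ to $\In_S(U)_\sr \supseteq \In_S(\Bun_{G,S}^{b'})_\sr$ is zero.

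The main obstacle I anticipate is the first step of item (1): making precise the interaction between the \emph{locally closed} immersion $[S/\Gbt] \hookrightarrow \Bun_{G,S}$ and characteristic classes, since Propositions \ref{cc pull} and \ref{cc push} are stated for open immersions and proper maps respectively, not for locally closed immersions directly. The resolution is that on the strongly regular part of the inertia stack the situation genuinely improves — by Theorem \ref{sr open}, $\In_S(\Bun_{G,S}^b)_\sr$ is \emph{open} in $\In_S(\Bun_{G,S})_\sr$, even though $\Bun_G^b$ is not open in $\Bun_G$ — so one does get to apply Proposition \ref{cc pull} after all, provided one is careful that $i_{b*}$ agrees with the appropriate $j_!$ when restricted to a suitable open substack of $\Bun_G$ (such as a union of strata that is quasi-compact, or $\Bun_G^{\le b}$) where $\Bun_G^b$ is closed; on such a substack $i_{b*}\rho = j_! f^*\rho$ literally, Proposition \ref{cc pull} applies to the complementary open immersion, and one concludes. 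A secondary technical point is verifying the dualizability of $i_{b*}\rho$ over $S$, which follows from Proposition \ref{tr dist cc}(1) (giving dualizability of $\rho$ on $[S/\Gbu]$, equivalently on $[S/\Gbt]$ via the equivalence of Proposition \ref{rep sheaf equivalence} and the fact that $f^*$ preserves dualizability) together with the preservation of dualizability under the relevant pushforward, so that the characteristic classes in the statement are all well-defined.
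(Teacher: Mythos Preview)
Your proposal is correct and lands on essentially the same argument as the paper, though the paper is terser. For item~(1) the paper simply cites Corollary~\ref{Gbt case} and Proposition~\ref{cc push}; unpacked, this means factoring the locally closed immersion as $\Bun_G^b \xrightarrow{\text{open}} \overline{\Bun_G^b} \xrightarrow{\text{closed}} \Bun_G$, applying Proposition~\ref{cc push} along the closed part, then restricting back to the open $\Bun_G^b$ (where $o^*o_*\tilde\rho=\tilde\rho$), and finally invoking Corollary~\ref{Gbt case} for the factor $|D_b^-|^{-1}$. Your route---first pass to an open substack in which $\Bun_G^b$ is closed, then apply Proposition~\ref{cc push}---is the mirror-image decomposition and works equally well; just make sure that the open you call ``$\Bun_G^{\le b}$'' is the honest open $\Bun_G \setminus (\overline{\Bun_G^b}\setminus\Bun_G^b)$, and note that $(i_{b*}\rho)|_U$ really does agree with $j_!\tilde\rho$ there because $U\cap\overline{\Bun_G^b}=\Bun_G^b$. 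For item~(2) your argument (restrict to the open complement of $\overline{\Bun_G^b}$ and observe the sheaf vanishes) is if anything more direct than the paper's, which again routes through the open--closed factorization of $i_b$.

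One small caution: your first instinct---use Theorem~\ref{sr open} to say $\In_S(\Bun_G^b)_{\sr}\hookrightarrow\In_S(\Bun_G)$ is open and then invoke Proposition~\ref{cc pull} directly---does not literally match the hypotheses of that proposition, which requires the \emph{base} map $U\to X$ to be an open immersion, not merely the induced map on inertia stacks. You correctly back off to the open-neighborhood maneuver, which is the right fix; the proof does ultimately need an honest open--closed factorization of the base, not just openness at the level of $\In_S(-)_{\sr}$.
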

\begin{proof}
The first claim follows from Corollary \ref{Gbt case} and Proposition \ref{cc push}.

The to prove the second claim, observe that $i_b$ is a composite of the open
immersion $\Bun_{G,S}^b \to \overline{\Bun_{G,S}^b}$ and the closed immersion
$\overline{\Bun_{G,S}^b} \to \Bun_{G,S}$.
So this claim follows from Propositions \ref{cc pull} and \ref{cc push}.
\end{proof}
If $b'$ is not a specialization of $b$, then the situation is more
complicated, as we will see in Example \ref{gl2 example}.
\begin{prop} \label{BunG case}
Let $f \colon \mathcal{M}_{b,S} \to \Bun_{G,S}$ be the chart defined in
\cite[Definition V.3.2]{fargues-scholze}, and let
$A$ be a dualizable object in $D_{\et}(\Bun_{G,S},\Lambda)$.
The base change of $\In_S(\mathcal{M}_{b,S}) \to \In_S(\Bun_{G,S})$ to $\In_S(\Bun_{G,S}^b)_{\sr}$
is an isomorphism.
Under this isomorphism,
\[ |D_b^-|^{-1} \cc_{\Bun_{G,S}/S}(A)_{b,\sr} = \cc_{\mathcal{M}_{b,S}/S}(f^*A)_{b,\sr}\,. \]
\end{prop}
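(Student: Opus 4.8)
The plan is to follow the same template as in Corollary \ref{Gbt case}, but now using the chart $f \colon \mathcal{M}_{b,S} \to \Bun_{G,S}$ of \cite[Definition V.3.2]{fargues-scholze} in place of the inclusion $[S/\Gbu] \to [S/\Gbt]$, and invoking Proposition \ref{semidirect inclusion 2} rather than Proposition \ref{semidirect inclusion 1}. First I would recall the structure of the chart: $\mathcal{M}_{b,S}$ is (locally, or after the relevant identifications) of the form $[\mathcal{M}_b/\Gbt]$ where $\mathcal{M}_b$ is a locally spatial diamond, and the key point is that $\Gbt = \Gbu \ltimes \Gbt^{>0}$, so that the situation fits the $G = M \ltimes N$ framework with $M = \Gbu$, $N = \Gbt^{>0}$, $M_{\reg} = \Gbu_{\sr}$. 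The first task is the statement about inertia stacks: that the base change of $\In_S(\mathcal{M}_{b,S}) \to \In_S(\Bun_{G,S})$ to $\In_S(\Bun_{G,S}^b)_{\sr}$ is an isomorphism. This should follow from Lemma \ref{conj isom} (the conjugation isomorphism $c$) together with Proposition \ref{sr inertia isom}, since over the strongly regular part of $\Bun_G^b$ the automorphism group is, after conjugation, reduced to $\Gbu_{\sr}$, and $\mathcal{M}_{b,S}$ is cohomologically smooth over $\Bun_{G,S}$ with the extra directions being exactly the contractible $\Gbt^{>0}$-part.

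The core computation is then to identify the base-change map \eqref{pullback morphism} for $f$ explicitly over the strongly regular locus. Using Proposition \ref{semidirect inclusion 2} (with $c'(m,n) = (m,m^{-1}n^{-1}mn)$), this reduces to understanding the action of $c'$ on $\tilde{f}_{\reg!}\hat{\pi}_{2,\reg}^* p_2^* f^! \Lambda$, which, just as in Corollary \ref{Gbt case}, is computed by smooth base change along cartesian squares of the shape \eqref{two base changes}, bringing us back to the structure map $(\Gbt^{>0})_S \to S$ and the translation action $n \mapsto m^{-1}n^{-1}mn$ on it. By Lemma \ref{unipotent factor}, this action is multiplication by $|D_b^-(m)|^{-1}$, and the factor $|D_b^-|^{-1}$ is locally constant on $M_{\reg}=\Gbu_{\sr}$, hence passes through all the functors. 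One then runs the same argument as in Corollary \ref{Gbt case}: check that the relevant unit map (analogous to \eqref{upper lower iso}) is an isomorphism, using cohomological smoothness of $\mathcal{M}_{b,S}$ over $\Bun_{G,S}$ and the contractibility of the fibers — here one invokes \cite[Proposition 4.8(i)]{hansen-moduli} again or the corresponding statement for the chart — and conclude that $c'$ acts by $|D_b^-|^{-1}$ on $\tilde{\Delta}_{f,\reg}^! \hat{\pi}_{2,\reg}^* p_2^* f^! \Lambda$. Plugging this into Proposition \ref{semidirect inclusion 2} then yields the stated formula $|D_b^-|^{-1} \cc_{\Bun_{G,S}/S}(A)_{b,\sr} = \cc_{\mathcal{M}_{b,S}/S}(f^*A)_{b,\sr}$.

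The main obstacle I expect is verifying that the chart $f \colon \mathcal{M}_{b,S} \to \Bun_{G,S}$ really does fit the semidirect-product bookkeeping of Proposition \ref{semidirect inclusion 2} after the appropriate localization — i.e. matching the abstract picture $[N//M] \to [S/M] \to [S/G]$ with the geometry of $\mathcal{M}_{b,S}$, the automorphism groups it carries, and its relation to $\Bun_G^b$ and its closure. This is essentially a matter of unwinding the definition in \cite[Definition V.3.2]{fargues-scholze} and the slope filtration of $\Gbt$ from Lemma \ref{Gbt structure}, but care is needed because $\mathcal{M}_{b,S}$ need not globally be a classifying stack; one works locally on the base, which is harmless since all the claims are about (iso)morphisms of sheaves on inertia stacks and can be checked v-locally. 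A secondary technical point is the isomorphism-of-inertia-stacks claim and ensuring the cohomological smoothness and finite $\mathrm{dim.trg}$ hypotheses needed for the base-change manipulations hold for the chart; these are standard properties of $\mathcal{M}_{b,S}$ recorded in \cite{fargues-scholze}, so no essential difficulty arises there.
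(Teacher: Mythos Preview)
Your proposal has a misidentification at its core: you invoke Proposition \ref{semidirect inclusion 2}, which concerns a map of the form $f \colon [N//M] \to [S/M]$, but the chart $f \colon \mathcal{M}_{b,S} \to \Bun_{G,S}$ is not of this shape. Even if $\mathcal{M}_{b,S}$ happens to be of the form $[N/\Gbu]$ for some $N$, the chart map $f$ does \emph{not} forget the $N$-coordinate and land in $[S/\Gbu]$; it sends a point to the corresponding $G$-bundle, and the target $\Bun_{G,S}$ has \emph{larger} automorphism groups at the stratum $b$ than the source does, not smaller or equal. Concretely, the fiber of $f$ over $\Bun_G^b \cong [S/\Gbt]$ is $[S/\Gbu]$, so after restriction the map becomes $[S/\Gbu] \to [S/\Gbt]$ --- this is precisely the setup of Proposition \ref{semidirect inclusion 1} (with $G = \Gbt$, $M = \Gbu$, $N = \Gbt^{>0}$), not of Proposition \ref{semidirect inclusion 2}. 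In Example \ref{gl2 example}, where Proposition \ref{semidirect inclusion 2} \emph{is} used, it is applied to the \emph{other} map $\mathcal{M}_{b',S} \to [S/\Gbpu]$, not to the chart $\pi_{b'} \colon \mathcal{M}_{b',S} \to \Bun_{G,S}$.

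The paper's argument is what you should aim for. By Theorem \ref{sr open}, $\In_S(\Bun_G^b)_{\sr}$ is open in $\In_S(\Bun_G)$, so one may restrict to that open substack. There the cartesian square
\[
\begin{tikzcd}
{[S/\Gbu]} \arrow[r] \arrow[d] & \mathcal{M}_{b,S} \arrow[d,"f"] \\
{[S/\Gbt]} \arrow[r] & \Bun_{G,S}
\end{tikzcd}
\]
exhibits the restricted chart as $[S/\Gbu] \to [S/\Gbt]$, and smooth base change along this square lets one transport the computation of \eqref{pullback morphism} to that local model. The remaining calculation is then literally Corollary \ref{Gbt case}, hence Proposition \ref{semidirect inclusion 1} together with Lemma \ref{unipotent factor}. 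Your instinct to rerun the argument of Corollary \ref{Gbt case} was correct; the error is only in which of the two semidirect-product propositions governs the chart map.
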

\begin{proof}
We can identify $(\Gbt)_S$ with a substack of $\Bun_{G,S}$.
By Theorem \ref{sr open}, $[(\Gbt)_{\sr}//\Gbt]_S$
is identified with an open substack of $\In_S(\Bun_G)$.  Then
same arguments as in Proposition \ref{semidirect inclusion 1}
and Corollary \ref{Gbt case} apply.  We use smooth base change
with respect to the cartesian square
\[ \begin{tikzcd}
{[S/\Gbu]} \arrow[r] \arrow[d] & \mathcal{M}_{b,S} \arrow[d] \\
{[S/\Gbt]} \arrow[r] & \Bun_{G,S}
\end{tikzcd} \]
to show that the calculations of Corollary \ref{Gbt case}
still apply in this setting.
\end{proof}
\begin{cor} \label{banach colmez case}
In the situation of Proposition \ref{semidirect inclusion 2},
suppose that $N = \BC(\EE)$, where $\EE$ is as in Lemma \ref{pushforward constant sheaf}.
Then the map \eqref{pullback morphism} is given by multiplication by
the function $M \to F^{\times}$ sending $m \mapsto \left|\det(m-1)\right|^{-1}$.
Here, $\det$ denotes the determinant of the action on $\EE$,
as defined in Definition \ref{det def}.
\end{cor}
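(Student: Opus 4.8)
The plan is to mimic the proof of Corollary \ref{Gbt case}, with Lemma \ref{unipotent factor} replaced by Lemma \ref{cohomology action}. By Proposition \ref{semidirect inclusion 2}, the restriction of \eqref{pullback morphism} to $[M_{\reg}//M]$ is the composite \eqref{pullback case 1} with $c'$ in place of $c$. Since $N = \BC(\EE)$ is cohomologically smooth and $c'$ is an isomorphism, every arrow in that composite is an isomorphism, so it is an automorphism of the invertible object $\tilde{\Delta}_{f,\reg}^!\tilde{f}^!_{\reg}\Lambda$, i.e.\ multiplication by a $\Lambda^{\times}$-valued function on $[M_{\reg}//M]$ (note that $|\det(m-1)|^{-1}$ is a power of $p$, hence a unit in the $\Zl$-algebra $\Lambda$). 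The task is to identify this function with $m\mapsto|\det(m-1)|^{-1}$.

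First I would reduce to a fiberwise statement over $M_{\reg}$. The $M$-torsor $M_{\reg}\times N \to [(M_{\reg}\times N)//M]$ together with the base change squares analogous to \eqref{two base changes} allow an application of smooth base change; combined with Lemma \ref{pushforward constant sheaf}, which guarantees that the pushforward appearing in \eqref{pullback case 1} is, over each point of $M_{\reg}$, isomorphic to $\Lambda[-2\deg\EE]$, this identifies the function in question, at a point $m\in M_{\reg}$, with the scalar by which $c'$ acts on $f_!\Lambda$, where $f$ is the quotient map of Lemma \ref{pushforward constant sheaf}. Restricted to the fiber over $m$, $c'$ is the automorphism $n\mapsto m^{-1}n^{-1}mn$ of $N = \BC(\EE)$; since $\BC(\EE)$ is a commutative group diamond on which $M$ acts through its $F$-linear action on $\EE$, a short computation in $M\ltimes N$ shows that this automorphism is $\BC$ applied to the $F$-linear automorphism $\id - \Ad(m)^{-1}$ of $\EE$ (an automorphism on $M_{\reg}$ by the standing hypothesis that $c'$ be an isomorphism), which equals $\Ad(m)^{-1}\circ(\Ad(m) - \id)$.

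Finally, Lemma \ref{cohomology action} computes this scalar as $|\det(\id - \Ad(m)^{-1})|^{-1}$, with $\det$ in the sense of Definition \ref{det def}; using the multiplicativity of that determinant and working case-by-case along the slope filtration of $\EE$ (positive-slope bundles, shifts of negative-slope bundles, torsion), one verifies that this equals $|\det(m-1)|^{-1}$, where $\det(m-1)$ is the Definition \ref{det def} determinant of the operator $m-1$ on $\EE$. The main obstacle is exactly this last verification: one must check that the sign and inverse conventions built into Definition \ref{det def} make $|\det(\id - \Ad(m)^{-1})| = |\det(m-1)|$ on the nose, and one must also check, as in Corollary \ref{Gbt case}, that none of the smooth base change isomorphisms in \eqref{pullback case 1} contributes an additional scalar, so that the whole composite is precisely multiplication by $|\det(m-1)|^{-1}$.
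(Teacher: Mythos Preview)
Your proposal is correct and follows exactly the approach the paper takes: its proof is the single sentence ``This follows from Lemma \ref{cohomology action} and Proposition \ref{semidirect inclusion 2} in the same way that Corollary \ref{Gbt case} follows from Lemma \ref{unipotent factor} and Proposition \ref{semidirect inclusion 1},'' and you have unpacked precisely that parallel, including the analogue of the base-change squares \eqref{two base changes} and the identification of the fiberwise action of $c'$ with $\id-\Ad(m)^{-1}$ on $\BC(\EE)$. Your caution about the sign/inverse bookkeeping in Definition \ref{det def} is well placed but not a gap in the argument.
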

\begin{proof}
This follows from Lemma \ref{cohomology action} and Proposition \ref{semidirect inclusion 2} in the same way
that Corollary \ref{Gbt case} follows from Lemma \ref{unipotent factor} and Proposition \ref{semidirect inclusion 1}.
\end{proof}

\begin{ex} \label{gl2 example}
Let $G = \GL_2$. Let $b, b' \in B(G)$ correspond to the
vector bundles $\OO(1/2)$ and $\OO \oplus \OO(1)$, respectively,
on the Fargues--Fontaine curve.
Then $G_b(F)$ is isomorphic to the group of units in a nontrivial quaternion
algebra over $F$, and $G_{b'}(F)$ is isomorphic to $(F^{\times})^2$.

Let $i_{b*} \colon [S/\Gbu] \to \Bun_{G,S}$ denote the inclusion map.
We will use Proposition \ref{BunG case} and Corollary \ref{banach colmez case}
to compute the strongly regular $b'$-part of the characteristic class of
$i_{b*} \Lambda$.

Let $\pi_{b'} \colon \mathcal{M}_{b',S} \to \Bun_{G,S}$ be the chart defined
in \cite[Definition V.3.2]{fargues-scholze}.
By Proposition \ref{BunG case},
\[ \cc_{\Bun_{G,S}/S}(i_{b*}\Lambda)_{b',\sr} = \abs{1-t_1/t_2} \cc_{\mathcal{M}_{b',S}}(\pi_{b'}^* i_{b*} \Lambda)_{b',\sr} \,. \]
Here, $t_1,t_2$ are coordinates on $G_{b'}(F) \cong (F^{\times})^2$.
There is an exact triangle
\[ \Lambda \to f^* i_{b*} \Lambda \to j_! \Lambda[-1] \,, \]
where $j \colon [S/\Gbu] \hookrightarrow \mathcal{M}_{b',S}$
is the inclusion of the closed stratum of $\mathcal{M}_{b',S}$ in $\mathcal{M}_{b',S}$.
So
\[ \cc_{\mathcal{M}_{b',S}}(\pi_{b'}^* i_{b*} \Lambda) = \cc_{\mathcal{M}_{b',S}}(\Lambda) - \cc_{\mathcal{M}_{b',S}}(j_! \Lambda) \,. \]
By Proposition \ref{cc push},
\[ \cc_{\mathcal{M}_{b'}}(j_! \Lambda) = \In(j)_! \cc_{[S/G_{b'}(F)]}(\Lambda) \,. \]
The characteristic class $\cc_{[S/\Gbpu]/S}(\Lambda)$ is the distribution that
has Harish-Chandra character $1$.  We will abuse notation slightly and write
\[ \cc_{[S/\Gbpu]/S}(\Lambda) = 1 \,. \]
Recall that $\mathcal{M}_{b',S}$ is isomorphic to $(\BC(\OO(-1))/\Gbpu)_S$.
By Proposition \ref{semidirect inclusion 2} applied to
$\mathcal{M}_{b',S} \to [S/\Gbu]$,
\[ \cc_{\mathcal{M}_{b',S}/S}(\Lambda)_{b',\sr} = \abs{1-t_2/t_1} \cc_{[S/\Gbu]/S}(\Lambda)_{b',\sr} = |1-t_2/t_1| \,. \]
Putting everything together, we obtain
\[ \cc_{\Bun_{G,S}/S}(i_{b*\Lambda})_{b',\sr} = \abs{1-t_1/t_2}\abs{1-t_2/t_1} - \abs{1-t_1/t_2} \,. \]
\end{ex}
In Example \ref{gl2 example hecke},
we will use this calculation to compute the image of
the compactly supported cohomology of the
Drinfeld upper half plane in 
$\Groth(\GL_2(F))$.

\begin{ex} \label{depth one example}
Let $G$ be a connected reductive group over $F$, let $b \in B(G)$ be basic, and let
$b' \in B(G)$ be an element whose only generalization is $b$.
By a similar calculation to Example \ref{gl2 example},
$\cc_{\Bun_G/S}(i_{b*}\Lambda)_{b',\sr}$
has Harish-Chandra character
\[ \prod_{\alpha \in \Phi^+ \cup \Phi^-} |1-\alpha(t)| - \prod_{\alpha \in \Phi^-} |1-\alpha(t)| \,. \]
Here, $\Phi^+$, $\Phi^-$ denote the set of roots of $G$ that are positive (resp.~negative) with respect to the parabolic $P_b^+$.
\end{ex}
These examples rely on the fact that
$\pi_{b'}^* i_{b*} \Lambda$ is an extension of a constant sheaf by
a shift of a sheaf supported at a single point.  To go further, it will be
necessary to understand the characteristic classes of more
complicated constructible sheaves on $\mathcal{M}_{b'}$.

\section{Hecke correspondences and trace distributions} \label{trace section}

Now, we will use Theorem \ref{sr open}, Proposition \ref{bundle centralizer},
and Proposition \ref{BunG case}
to generalize \cite[Section 6]{hkw}.
We will recall the setup of \cite{hkw} and explain how to generalize it.

\subsection{Local shtuka spaces and Hecke correspondences} \label{hecke section}

Let $F$ be a finite extension of $\Qp$.
Let $G$ be a connected reductive group over $F$,
let $b \in B(G)$, and let $\mu$ be a conjugacy class of cocharacters
$\mathbb{G}_{m,\ol{F}} \to G_{\ol{F}}$.
Let $D(G_b(F),\Qlb)$ denote the derived category of smooth
$\Qlb$-representations of $G_b(F)$.
We will now recall the functor
\[ \rho \mapsto R\Gamma(G,b,\mu)[\rho] \]
from $D(G_b(F),\Qlb)$ to $D(G(F),\Qlb)$
that was mentioned in the introduction.
We also recall its relation to the Hecke operators defined in \cite[\S IX]{fargues-scholze}.

Hecke operators are defined using the diagram of v-stacks
\[ \begin{tikzcd}
 & \Hecke \arrow[ld,"h_1"] \arrow[rd,"h_2"] \arrow[r] & \LHecke \\
\Bun_G & & \Bun_G \times \Div^1
\end{tikzcd} \,. \]
Here, $\Div^1 = (\Spd \breve{F})/\Frob^{\ZZ}$ is the diamond classifying degree $1$ Cartier divisors on the Fargues--Fontaine curve,
$\Hecke$ is the stack classifying modifications of vector bundles
on the Fargues--Fontaine curve that are isomorphisms away from a single point of $\Div^1$,
and $\LHecke$ is the local Hecke stack $[L^+G\backslash LG/L^+G]$.

The stack $\LHecke$ has a stratification by Schubert cells.
For any conjugacy class of cocharacters $\mu$ of $G$,
there is a diagram
\[ \begin{tikzcd}
 & \Hecke_{\le \mu} \arrow[ld,"h_{1,\le \mu}"] \arrow[rd,"h_{2,\le \mu}"] \arrow[r] & \LHecke_{\le \mu} \\
\Bun_G & & \Bun_G \times \Div^1
\end{tikzcd} \,. \]
The maps $h_{1,\le \mu}$ and $h_{2,\le \mu}$ are proper \cite[\S I.2]{fargues-scholze}.

Let $C$ be the completion of the algebraic closure of $\ol{F}$.
The local shtuka space $\Sht_{G,b,\mu,C}$ is the fiber product
\[ \Hecke_{\le \mu} \times_{\Bun_G \times \Bun_G \times \Div^1} \Spd C \,, \]
where the map from $\Spd C$ to the first copy of $\Bun_G$ corresponds to the $G$-bundle
$\EE_{b,C}$, the map from $\Spd C$ to the second copy of $\Bun_G$ corresponds to the trivial
$G$-bundle $\EE_{1,C}$, and the map $\Spd C \to \Div^1$ is induced by the
inclusion $\breve{F} \hookrightarrow C$.

The space $\Sht_{G,b,\mu,C}$ is a locally spatial diamond.
It admits an action of $G(F) \times \Gbt(C)$, and in particular
of the subgroup $G(F) \times G_b(F)$.
For any compact open subgroup $K \subset G(F)$, the quotient
$\Sht_{G,b,\mu,K,C}=\Sht_{G,b,\mu,C}/K$ is also a locally spatial diamond
\cite[\S 23]{berkeley}.

Choose a prime $\ell \ne p$.
For each nonnegative integer $n$, the geometric Satake equivalence \cite[\S VI]{fargues-scholze} associates with
$\mu$ an object $\mathcal{S}_{\mu,n}$ in $D_{\et}(\LHecke,\ZZ/\ell^n \ZZ)$.
(One sometimes uses a different normalization that is defined over $(\ZZ/\ell^n \ZZ)[q^{1/2}]$, where $q$
is the size of the residue field of $F$, in order to trivialize a cyclotomic twist in the Weil group action.
But since we are ignoring the Weil group action, we use a normalization that is always defined over $\ZZ/\ell^n \ZZ$.)
We will also write $\mathcal{S}_{\mu,n}$ for the pullback of this object to $\Sht_{G,b,\mu,K,C}$.
Define
\[ R\Gamma_c(\Sht_{G,b,\mu,K,C},\mathcal{S}_{\mu}) = \varinjlim_U \varprojlim_n R\Gamma_c(U,\mathcal{S}_{\mu,n}) \,, \]
where $U$ runs over quasicompact open subsets of $\Sht_{G,b,\mu,K,C}$.
For any object $\rho$ of $D(G_b(F),\Qlb)$, define
\[ R\Gamma(G,b,\mu)[\rho] = \varinjlim_{K \subset G(F)} \RHom_{G_b(F)}(R\Gamma_c(\Sht_{G,b,\mu,K,C},\mathcal{S}_{\mu})\otimes_{\Zl} \Qlb, \rho) \,. \]
It is an object of $D(G(F),\Qlb)$.
By \cite[Proposition 6.4.5]{hkw}, if $\rho$ is admissible (resp.~of finite length),
then $R\Gamma(G,b,\mu)[\rho]$ is as well.

For any $\Zl$-algebra $\Lambda$, there is a Hecke operator
$T_{\mu} \colon D_{\lis}(\Bun_G,\Lambda) \to D_{\lis}(\Bun_G,\Lambda)^{BW_E}$.
We will also write $T_{\mu}$ for the induced functor
$D_{\lis}(\Bun_{G,C},\Lambda) \to D_{\lis}(\Bun_{G,C},\Lambda)$.
\begin{prop}[{\cite[Proposition 6.4.5]{hkw}}] \label{mant formula}
For any $G$, $b$, $\mu$, and any object $\rho$ of $D(G_b(F),\Qlb)$,
\[ R\Gamma(G,b,\mu)[\rho] \cong i_1^* T_{-\mu} i_{b*} \rho \,. \]
Here, $i_b$ denotes the composite
\[ [\Spd C/\Gbu] \to [\Spd C/\Gbt] \xrightarrow{\sim} \Bun_{G,C}^b \to \Bun_{G,C} \,, \]
and $i_1$ is defined similarly.
\end{prop}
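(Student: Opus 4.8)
The plan is to recall and adapt the argument of \cite[\S 6.4]{hkw}. The essential point is to give a sheaf-theoretic description of $R\Gamma_c(\Sht_{G,b,\mu,K,C},\mathcal{S}_{\mu})$ with its $G_b(F)$-action, and then to convert the functor $\varinjlim_K \RHom_{G_b(F)}(-\otimes_{\Zl}\Qlb,\rho)$ into operations on $\Bun_{G,C}$ by duality. First I would base change the Hecke correspondence $\Hecke_{\le\mu}\to \Bun_G\times\Bun_G\times\Div^1$ along the map $\Bun_{G,C}^b\times\Bun_{G,C}^1\times\Spd C\to\Bun_G\times\Bun_G\times\Div^1$ whose three components classify $\EE_{b,C}$, the trivial $G$-bundle, and the chosen degree-one divisor. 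Since $\Bun_{G,C}^b\cong[\Spd C/\Gbt]$ and the automorphism diamond of the trivial bundle is $\underline{G(F)}$ (its isocrystal has only slope $0$), so that $\Bun_{G,C}^1\cong[\Spd C/\underline{G(F)}]$, this base change is the stack quotient of $\Sht_{G,b,\mu,C}$ by the $\Gbt(C)\times G(F)$-action, compatibly with $\mathcal{S}_{\mu}$. Quotienting the $G(F)$-factor by a compact open $K$ and exhausting by quasicompact opens, this realizes $R\Gamma_c(\Sht_{G,b,\mu,K,C},\mathcal{S}_{\mu})$, with its smooth $G_b(F)$-action, as a colimit of $!$-pushforwards of $\mathcal{S}_{\mu}$ along the $h_2$-leg, pulled back from the stratum $\Bun_{G,C}^b$ along the $h_1$-leg and restricted over the stratum of the trivial bundle along $h_2$.

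Next I would introduce the duality. Applying $\varinjlim_K\RHom_{G_b(F)}(-\otimes_{\Zl}\Qlb,\rho)$ to this complex amounts, after $\otimes$--$\RHom$ adjunction, to a $\ast$-pushforward against the Verdier dual of $\mathcal{S}_{\mu}$ twisted by $i_{b*}\rho$; using that $h_{2,\le\mu}$ is proper (so $h_{2!}=h_{2*}$), the projection formula, and proper and smooth base change, this can be rearranged into $i_1^*\,h_{2*}(h_1^* i_{b*}\rho\otimes\mathbb{D}_{h_2}\mathcal{S}_{\mu})$, where $\mathbb{D}_{h_2}$ is Verdier duality relative to the $h_2$-leg. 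By the duality properties of geometric Satake \cite[\S VI]{fargues-scholze}, $\mathbb{D}_{h_2}\mathcal{S}_{\mu}\cong\mathcal{S}_{-\mu}$ up to a Tate twist, and with the normalization of $\mathcal{S}_{\mu,n}$ fixed in this section and of $R\Gamma_c$ used above one checks that this twist cancels. Hence the complex equals $i_1^*\,h_{2*}(h_1^* i_{b*}\rho\otimes\mathcal{S}_{-\mu})=i_1^* T_{-\mu} i_{b*}\rho$. Two compatibilities remain to be checked: that $h_{2!}$, $\otimes\mathcal{S}_{-\mu}$ and $i_1^*$ commute with $\varinjlim_K$, which is descent along $[\Spd C/\underline{K}]\to[\Spd C/\underline{G(F)}]$; and that the passage between $\Gbu$ and $\Gbt=\Gbu\ltimes\Gbt^{>0}$ (Lemma \ref{Gbt structure}) contributes nothing extra, because $\Gbt^{>0}$ is a cohomologically smooth Banach--Colmez space whose compactly supported cohomology is a shift of the constant sheaf (Lemma \ref{unipotent factor}), which is exactly what makes $i_{b*}$ (rather than $i_{b!}$) the correct variance in the final formula.

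The main obstacle is precisely this duality and normalization bookkeeping: one must pin down the correct variance ($!$ versus $\ast$) and Tate twist at every step so that the composite of ``$R\Gamma_c$ with $\mathcal{S}_{\mu}$-coefficients'' and ``$\RHom_{G_b(F)}(-,\rho)$'' reproduces $T_{-\mu}$ applied to $i_{b*}\rho$ with no residual shift. This is where the argument genuinely uses the self-duality of the Satake sheaves, the properness of $h_{2,\le\mu}$, and the cohomological smoothness of $\Gbt^{>0}$; the remaining manipulations are formal consequences of the six-functor formalism on small v-stacks.
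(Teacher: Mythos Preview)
The paper does not prove this proposition; it is stated with attribution to \cite[Proposition 6.4.5]{hkw} and used as a black box. Your sketch is a reasonable outline of the argument in \cite[\S 6.4]{hkw}: one realizes the shtuka space as the fiber of the Hecke stack over the strata for $b$ and the trivial bundle, then converts $\varinjlim_K \RHom_{G_b(F)}(R\Gamma_c(-,\mathcal{S}_\mu),\rho)$ into $i_1^* h_{2*}(h_1^* i_{b*}\rho \otimes \mathcal{S}_{-\mu})$ using properness of $h_{2,\le\mu}$, the projection formula, and the duality $\mathbb{D}\mathcal{S}_\mu \cong \mathcal{S}_{-\mu}$ from geometric Satake. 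Since there is no proof in the present paper to compare against, there is nothing further to add beyond noting that your outline matches the cited source.
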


As in \cite{hkw}, we will deduce information about the Hecke operators for $\Lambda=\Qlb$ from the Hecke operators for $\Lambda = \overline{\mathbb{Z}}_{\ell}/\ell^n$.
If $\Lambda$ is killed by a power of $\ell$, then we can identify $D_{\lis}(\Bun_G,\Lambda)$
with $D_{\et}(\Bun_G,\Lambda)$ and $D_{\lis}(\Bun_G,\Lambda)^{BW_E}$ with $D_{\et}(\Bun_G \times [*/\underline{W_E}],\Lambda)$.
The latter can further be identified with a full subcategory of $D_{\et}(\Bun_G \times \Div^1,\Lambda)$.
The induced functor $D_{\et}(\Bun_G,\Lambda) \to D_{\et}(\Bun_G \times \Div^1,\Lambda)$ is given by
\[ A \mapsto h_{2*} (h_1^* A \otimes \mathcal{S}_{\mu,n}) = h_{2,\le \mu *} (h_{1,\le \mu}^* A \otimes \mathcal{S}_{\mu,n}) \,, \]
where $n$ is chosen so that $\ell^n$ kills $\Lambda$.

The problem of computing the trace distribution is therefore reduced
to the problem of determining how the characteristic classes interact
with the operations appearing in Proposition \ref{mant formula}.
The analysis can be summarized as follows.
\begin{itemize}
\item Proposition \ref{cc formula simple} computes 
the restriction of $\cc_{\In_C(\Bun_{G,C})/C}(i_{b*} \rho)$ to some of the
strata of $\In_C(\Bun_{G,C})_{\sr}$.  Our lack of knowledge about what happens
on the remaining strata is the reason that we are unable to extend
Theorem \ref{trace intro} to the entire strongly regular locus.
\item The pullback $h_1^*$ and tensor product with $\mathcal{S}_{\mu}$
are treated in \cite[Proposition 6.4.8]{hkw}.  We make no changes to
this part of their argument.
\item Proposition \ref{cc push} reduces the problem of analyzing
the pushforward $h_{2*}$ to the problem of characterizing the fibers
of $h_2$.  This analysis is performed in Section \ref{fixed point section}.
\item Proposition \ref{cc pull} handles the pullback $i_1^*$.
\end{itemize}
Section \ref{trace proof section} combines all of the ingredients.

\subsection{Fixed points in Hecke correspondences} \label{fixed point section}

Let $\Hecke^{b,*}$ denote preimage of $\Bun_G^b$ in $\Hecke$ under $h_1$, and let $\Hecke^{*,1}$ denote the preimage
of $\Bun_G^b \times \Div^1$ in $\Hecke$ under $h_2$.  Let $\Hecke^{b,1}$ denote the intersection of $\Hecke^{b,*}$ and
$\Hecke^{*,1}$

The stack $\In(\Hecke^{b,1})$ parametrizes triples
$(g,g',\phi)$, where $g \in G(F)$, $g'$ is a point of $\tilde{G}_b$, and
$\phi \colon \EE_1 \dashrightarrow \EE_b$ is a modification, up to
equivalence.  Two triples $(g,g',\phi)$ and $(h,h',\psi)$ are
considered equivalent if there
exist $\gamma \in G(F)$ and a point $\gamma'$ of $\tilde{G}_b$
such that $h=\gamma g \gamma^{-1}, g' = \gamma' h' \gamma^{\prime-1}, \psi = \gamma' \phi \gamma^{-1}$.

Suppose we have a triple $(g,g',\phi)$ such that $g$
is strongly regular, with centralizer $T$.  Then by
Proposition \ref{bundle centralizer}, the modification $\phi$ must come
from a modification of $T$-bundles.
Let $\inv[b](g,g') \in B(T)$ denote the isomorphism class
of the $T$-bundle associated with $\EE_b$.

For any torus $T$ over $F$, let $X_*(T)$ denote its cocharacter lattice.
Both $B(T)$ and $X_*(T)$ have the structure of an abelian group.
There is also an action of $\Gamma = \Gal(\ol{F}/F)$ on $X_*(T)$.
\begin{lem}~ \label{torus modifications}
\begin{enumerate}
\item 
Any homomorphism $\beta_{\Gm} \colon X_*(\Gm) \to B(\Gm)$ extends uniquely
to a natural transformation $\beta \colon X_*(-)_{\Gamma} \to B(-)$
of functors from the category of tori over $F$ to the category
of abelian groups.  If $\beta_{\Gm}$ is an isomorphism,
then $\beta$ is also an isomorphism.
\item Let $\beta \colon X_*(-)_{\Gamma} \to B(-)$ be the natural transformation such that
$\beta_{\Gm}$ sends the identity cocharacter of $X_*(\Gm)$ to the class
of a uniformizer of $\breve{F}$ in $B(\Gm)$.
Let $T$ be a torus over $F$.  Then there is a modification of $T$-bundles
$\EE_1 \dashrightarrow \EE_b$ with parameter $\mu$ if and only
if $\beta_T(\mu)=b$.
\end{enumerate}
\end{lem}
\begin{proof}
The first item follows from \cite[Lemma 2.2 and Proposition 2.3]{kottwitz-isocrystals}.

The rule that sends a cocharacter $\mu$ of $T$ to the isomorphism class
of the $T$-bundle obtained by modifying the trivial $T$-bundle
by $\mu$ is a natural transformation $X_*(-) \to B(-)$.  Since the action of
$\Gamma$ on the Fargues--Fontaine curve $X_{F,C}$ does not change isomorphism classes of vector bundles,
this natural transformation factors through $X_*(-)_{\Gamma} \to B(-)$.
The characterization of the map $X_*(\Gm) \to B(\Gm)$ follows from \cite[\S 8.2.1.1]{fargues-fontaine}.
\end{proof}

\begin{defn} \label{relb def}
Let $\Rel_b$ be
the set of equivalence classes of of triples $(g,g',\lambda)$, where
$g \in G(F)_{\sr}$, $g' \in G_b(F)_{\sr}$ are stably conjugate, and
$\lambda \in X_*(T)$ has the property that its image in
$X_*(T)_{\Gamma} \cong B(T)$ is the same as $\inv[b](g,g')$.
Here, $T$ is the centralizer of $g$ in $G$.
Two triples $(g_1,g_1',\lambda_1)$ and $(g_2,g_2',\lambda_2)$
are considered equivalent if there exist $h \in G(F)$, $h' \in G_b(F)$
satisfying $(g_2,g_2',\lambda_2)=(hg_1h^{-1},h'g_1'h^{\prime-1},h\lambda_1 h^{-1})$.

For any conjugacy class of cocharacters $\mu$ of $G$, let
$\Rel_{b,\mu}$ be the subset of $\Rel_b$ consisting of
triples $(g,g',\lambda)$ for which $\lambda \le \mu$.
\end{defn}
The above definition is slightly different from \cite[Definition 3.2.4]{hkw}, which only required that $\lambda$ and $\inv[b](g,g')$ have the same image in
$\pi_1(G)_{\Gamma}$.  The two definitions agree on the elliptic locus,
by \cite[Theorem 6.2.3]{hkw}.

The following extends \cite[Corollary 6.2.4]{hkw}.
\begin{lem}
The inclusions
\[ \In_C(\Hecke^{b,*}_{G,\le \mu,C})_{\sr} \hookleftarrow
\In_C(\Hecke^{b,1}_{G,\le \mu,C})_{\sr} \hookrightarrow \In_C(\Hecke^{*,1}_{G,\le \mu,C})_{\sr} \]
are open and closed immersions.

There is a homeomorphism
\[ \left| \In_C\left(\Hecke^{b,1}_{G,\le\mu,C} \right)_{\sr} \right| \cong
\Rel_{b,\mu} \,. \]
\end{lem}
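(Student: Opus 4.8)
The plan is to reduce every assertion to the case of a torus, using Proposition~\ref{bundle centralizer} together with the strategy of the proof of Theorem~\ref{sr open}: once one is working with a torus $T$, both $|\Bun_T|$ and the set of $\mu$-bounded modifications of $T$-bundles are discrete, which forces the relevant data to be locally constant in families.

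First I would treat the three inclusions. Since $1 \in B(G)$ is basic, $\Bun_G^1$ is open in $\Bun_G$, whereas $\Bun_G^b$ is locally closed; hence $\Hecke^{b,1}_{G,\le\mu} \hookrightarrow \Hecke^{b,*}_{G,\le\mu}$ is an open immersion and $\Hecke^{b,1}_{G,\le\mu} \hookrightarrow \Hecke^{*,1}_{G,\le\mu}$ is a locally closed immersion, and by Propositions~\ref{cc pull} and~\ref{cc push} the induced maps of inertia stacks are an open, resp.\ a locally closed, immersion. To upgrade these to simultaneously open and closed after restricting to the strongly regular locus, I argue as in the proof of Theorem~\ref{sr open}. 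An $S$-point of $\In_C(\Hecke^{b,*}_{G,\le\mu,C})_{\sr}$ is a $\mu$-bounded modification $\phi$ between two $G$-bundles on $X_S$ — the target of type $b$, the source a free bundle $\mathcal E$ — together with intertwined automorphisms whose common image in $(G//G)(F)$ is strongly regular. Near any point $z$ of $S$, I set $T$ to be the centralizer in $G_{b''}$ of the image of the automorphism in $G_{b''}(F)$, where $b''$ is the Newton type of $\mathcal E$ at $z$; then Proposition~\ref{bundle centralizer} exhibits $\mathcal E$, on a neighborhood of $z$, as induced from a $T$-bundle, and since $|\Bun_T|$ is discrete its Newton type is locally constant there. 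Thus the locus of $|S|$ on which $\mathcal E$ is trivial is open and closed, which shows that $\In_C(\Hecke^{b,1}_{G,\le\mu,C})_{\sr}$ is open and closed in $\In_C(\Hecke^{b,*}_{G,\le\mu,C})_{\sr}$; the inclusion into $\In_C(\Hecke^{*,1}_{G,\le\mu,C})_{\sr}$ is handled symmetrically, pinning down the Newton type of the free bundle on the other side of the modification.

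For the homeomorphism, the crucial point is that $|\In_C(\Hecke^{b,1}_{G,\le\mu,C})_{\sr}|$ is discrete. Consider an $S$-point: a $\mu$-bounded modification $\phi\colon \mathcal E_1 \dashrightarrow \mathcal F$ with $\mathcal E_1$ trivial and $\mathcal F$ of type $b$, together with strongly regular intertwined automorphisms $g$ of $\mathcal E_1$ and $g'$ of $\mathcal F$. Near any $z$, the map to $\underline{G(F)}$ classifying $g$ is constant, so the centralizer $T = Z_G(g_z)$ — a maximal torus — is constant there; moreover $g$ and $g'$ have the same image in $(G//G)(F)$ (as $\phi$ conjugates them away from the modification point, so they agree away from a point, hence everywhere by Lemma~\ref{affine locally constant}), so by Corollary~\ref{sr etale locally conj} and Proposition~\ref{bundle centralizer} the modification $\phi$ is, on a neighborhood of $z$, induced from a $\mu$-bounded modification of $T$-bundles from the trivial $T$-bundle to the centralizer $T$-bundle of $g'$. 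By Lemma~\ref{torus modifications} these are classified by the finite discrete set $\{\lambda \in X_*(T) : \lambda \le \mu\}$, and the remaining data is the locally constant choice of $g \in \underline{G(F)}$ together with $g'$, which on the strongly regular locus may be taken in $\Gbu$ by Lemma~\ref{conj isom}. Hence $|S| \to |\In_C(\Hecke^{b,1}_{G,\le\mu,C})_{\sr}|$ is locally constant for every $S$, so this space is discrete, and the construction moreover packages the underlying $S$-family into a locally constant triple $(g,g',\lambda)$, i.e.\ into a morphism to the constant set $\Rel_{b,\mu}$.

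It remains to match this discrete set with $\Rel_{b,\mu}$. On geometric points, the triple just constructed consists of a strongly regular $g \in G(F)$, well-defined up to conjugacy, with centralizer torus $T$; a strongly regular $g' \in G_b(F)$ stably conjugate to $g$; and a cocharacter $\lambda \in X_*(T)$ with $\lambda \le \mu$ whose image in $X_*(T)_\Gamma \cong B(T)$ is the class of the centralizer $T$-bundle, which is $\inv[b](g,g')$ by definition — precisely a point of $\Rel_{b,\mu}$, and one checks the equivalence relations correspond. Conversely every such triple is realized, since by Proposition~\ref{bundle centralizer}(3) and Lemma~\ref{torus modifications} one can reconstruct the corresponding modification $\mathcal E_1 \dashrightarrow \mathcal F$. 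The resulting bijection is the one induced by the morphism to the constant set $\Rel_{b,\mu}$, hence continuous, and as both sides are discrete it is a homeomorphism. I expect the discreteness step to be the main obstacle: one has to carry out the reduction to tori of Proposition~\ref{bundle centralizer} in families over an arbitrary base and uniformly over all strongly regular classes at once, and in particular to verify that the $\mu$-bound on modifications of $G$-bundles corresponds, under the embedding of the affine Grassmannian of the maximal torus $T$, to the bound $\lambda \le \mu$; after that, identifying the discrete set with $\Rel_{b,\mu}$ is bookkeeping with the structural results already in hand.
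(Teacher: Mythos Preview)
Your proposal is correct and follows essentially the same approach as the paper. The paper's proof is terser: for the first claim it simply cites Theorem~\ref{sr open} (pulling back the decomposition $\In(\Bun_G)_{\sr} = \bigsqcup_{b'} \In(\Bun_G^{b'})_{\sr}$ along $\In(h_1)$ and $\In(h_2)$), and for the second it cites Proposition~\ref{bundle centralizer} and Lemma~\ref{torus modifications}; you unwind these citations in detail, but the underlying mechanism is identical.
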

\begin{proof}
The first claim follows from Theorem \ref{sr open}, and the second follows
from Proposition \ref{bundle centralizer} and Lemma \ref{torus modifications}.
\end{proof}

\subsection{Computing trace distributions} \label{trace proof section}

\begin{lem}
Let $T$ be a maximal torus of $G$.  Then
the map 
$[T(F)_{\sr}//T(F)] \to [G(F)_{\sr}//G(F)]$ induced
by the inclusion $T \hookrightarrow G$
identifies $[T(F)_{\sr}//T(F)]$ with an open and closed substack of
$[G(F)_{\sr}//G(F)]$.

Hence for any $\Zl$-algebra $\Lambda$,
$\Dist(T(F)_{\sr},\Lambda)$
is naturally identified with
a direct summand of $\Dist(G(F)_{\sr},\Lambda)^{G(F)}$.
\end{lem}
\begin{proof}
The map $T(F)_{\sr} \times G(F) \to G(F)$ sending $(t,g) \mapsto gtg^{-1}$
is a smooth map of locally $F$-analytic manifolds.
By \cite[\S II.III.10]{serre-lie-algebras}, the image of the map is open.
Since the centralizer in $G(F)$ of any element of $T(F)_{\sr}$ is $T(F)$,
it follows that $[T(F)_{\sr}//T(F)]$ can be identified with an open
substack of $[G(F)_{\sr}//G(F)]$.  The complement of the image can be covered
with inertia stacks of other tori, which are open.  So the image is also closed.
\end{proof}

Define a function
\[ \mathcal{T}^{G_b \to G}_{b,\mu} \colon \Dist(G_b(F)_{\sr},\Lambda)^{G_b(F)} \to \Dist(G(F)_{\sr},\Lambda)^{G(F)} \]
as follows.  Given $\phi \in \HDist(G(F)_{\sr},\Lambda)^{G(F)}$
and a maximal torus $T$ of $G$,
denote the restriction of $\phi$ to $\HDist(T(F)_{\sr},\Lambda)$ by $\phi|_T$.
For any $\phi \in \Dist(G_b(F)_{\sr},\Lambda)^{G_b(F)}$, define $\mathcal{T}^{G_b \to G}_{b,\mu}(\phi)$
such that for any $g \in G(F)_{\sr}$,
\begin{equation} \label{transfer}
\left. \mathcal{T}^{G_b \to G}_{b,\mu}(\phi)\right|_{Z_G(g)} = (-1)^{\left<\mu,2\rho_G\right>} \sum_{(g,g',\lambda) \in \Rel_b} \dim r_{\mu}[\lambda] \left. \phi\right|_{Z_{G_b}(g')} \,.
\end{equation}
Here, $Z_G(g)$ and $Z_{G_b}(g')$ denote the centralizers of $g$ and $g'$, respectively,
$\rho_G$ is half the sum of the positive roots of $G$,
$r_{\mu}$ is the representation of the Langlands dual group $\widehat{G}$ with highest weight $\mu$,
and we use the fact that $g$ and $g'$ are stably conjugate to identify $Z_G(g)$ with $Z_{G_b}(g')$.

The following extends \cite[Proposition 6.4.7]{hkw}.
\begin{prop} \label{trace torsion}
Let $\Lambda$ be a $\Zl$-algebra that is killed by a power of $\ell$, and let $\rho$ be an admissible representation of $G_b(F)$
with coefficients in $\lambda$.  Then we have an equality
\[ \trdist\left(R\Gamma(G,b,\mu)[\rho]_{\sr} \right) = \sum_{b' \in \overline{\{b\}}} \mathcal{T}^{G_{b'} \to G}_{b',\mu} \cc_{\Bun_{G,C}/C}(i_{b*} \rho)_{b',\sr} \]
in $\Dist(G(F)_{\sr},\Lambda)^{G(F)}$.
\end{prop}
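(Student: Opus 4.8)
The plan is to compute $\trdist R\Gamma(G,b,\mu)[\rho]_{\sr}$ by unwinding Proposition \ref{mant formula} and tracking characteristic classes through each of the four operations listed after that proposition: pushforward along $i_b$, pullback along $h_1$ together with the Satake twist, pushforward along $h_2$, and pullback along $i_1$. The starting point is the identity $R\Gamma(G,b,\mu)[\rho] \cong i_1^* T_{-\mu} i_{b*}\rho$. Since everything in sight is dualizable over $C$ (the representation $\rho$ is admissible, the Hecke maps are proper, and $T_{-\mu}$ preserves dualizability), Proposition \ref{tr dist cc} lets us replace $\trdist$ by $\cc_{[C/\underline{G(F)}]/C}$, and by Proposition \ref{cc pull} this equals $\In_C(i_1)^*$ of the characteristic class of $T_{-\mu}i_{b*}\rho$ on $\Bun_{G,C}$ restricted to the strongly regular locus.

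First I would handle the pushforward $h_{2,\le\mu *}$. By Proposition \ref{cc push}, $\cc_{\Bun_{G,C}/C}$ of $h_{2,\le\mu*}(h_{1,\le\mu}^*(-)\otimes\mathcal{S}_{-\mu})$ is $\In_C(h_{2,\le\mu})_!$ of the characteristic class upstairs on $\Hecke_{\le\mu}$. Restricting to the strongly regular inertia and using the previous lemma identifying $\left|\In_C(\Hecke^{b,1}_{G,\le\mu,C})_{\sr}\right|$ with $\Rel_{b,\mu}$, the pushforward $\In_C(h_{2,\le\mu})_!$ becomes, on each fiber, a finite sum over the points $(g,g',\lambda)\in\Rel_b$ with $\lambda\le\mu$ lying over a given strongly regular $g'$. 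Next I would feed in \cite[Proposition 6.4.8]{hkw} unchanged to evaluate $h_{1,\le\mu}^*$ and the tensor with $\mathcal{S}_{-\mu}$: this produces the factor $\dim r_{-\mu}[\lambda] = \dim r_{\mu}[\lambda]$ and, after the pushforward, the sign $(-1)^{\left<\mu,2\rho_G\right>}$, exactly reproducing the shape of \eqref{transfer}. The only genuinely new input is that we are no longer restricting attention to the elliptic locus, so the $b'$ contributing are all the specializations $b'\in\overline{\{b\}}$, which is why the outer sum in the statement runs over $\overline{\{b\}}$; for each such $b'$, the relevant piece of $\cc_{\Bun_{G,C}/C}(i_{b*}\rho)$ is $\cc_{\Bun_{G,C}/C}(i_{b*}\rho)_{b',\sr}$ as defined via Proposition \ref{sr inertia isom}(\ref{bun g part}).

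The step I expect to be the main obstacle is bookkeeping the normalization factors $|D_{b'}^-|^{-1}$ that appear when one passes between $[C/\Gbu[b']]$ and $[C/\widetilde{G}_{b'}]$, and between $\widetilde{G}_{b'}$ and the chart $\mathcal{M}_{b',C}$ on $\Bun_{G,C}$. Concretely: the characteristic class $\cc_{\Bun_{G,C}/C}(i_{b*}\rho)_{b',\sr}$ is \emph{defined} as a distribution on $G_{b'}(F)_{\sr}$ via the isomorphism $\In_C(\Bun_{G,C})_{\sr}\cong\bigsqcup_{b'}\In_C([C/\Gbpu])_{\sr}$, and when $b'=b$ Proposition \ref{cc formula simple}(1) tells us this recovers $\trdist(\rho)$ up to the factor $|D_b^-|^{-1}$. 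One must check that the definition of $\mathcal{T}^{G_{b'}\to G}_{b',\mu}$ in \eqref{transfer} is the one that makes the product over $b'$ telescope correctly — that is, that the transfer map built from $\Rel_{b'}$ and $\dim r_{\mu}[\lambda]$ correctly incorporates whatever Jacobian-type factors arise from the identification $\In_C(\Hecke^{b',1})_{\sr}\cong\Rel_{b',\mu}$ and the map $h_2$. This is precisely where Proposition \ref{BunG case} enters: it says that on the chart $\mathcal{M}_{b',C}$ the characteristic class of any dualizable $A$ satisfies $|D_{b'}^-|^{-1}\cc_{\Bun_{G,C}/C}(A)_{b',\sr} = \cc_{\mathcal{M}_{b',C}/C}(f^*A)_{b',\sr}$, so that all computations can be transported to $\mathcal{M}_{b',C}$ where the fibers of $h_2$ are understood.

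Finally I would assemble the pieces: apply $\In_C(i_1)^* $ (Proposition \ref{cc pull}) to land back on $[C/\underline{G(F)}]$, observe that the restriction to the strongly regular locus of $G(F)$ picks out exactly the sum $\sum_{b'\in\overline{\{b\}}}\mathcal{T}^{G_{b'}\to G}_{b',\mu}$ applied to the respective $b'$-components of $\cc_{\Bun_{G,C}/C}(i_{b*}\rho)$, and invoke Proposition \ref{tr dist cc}(2) to rewrite the left-hand side as $\trdist R\Gamma(G,b,\mu)[\rho]_{\sr}$. The verification that the signs, the $\dim r_{\mu}[\lambda]$ multiplicities, and the $|D_{b'}^-|$-normalizations match across the chain is the content of the proof; each individual ingredient is either cited from \cite{hkw} or established in Sections \ref{char class section} and \ref{fixed point section}, so the argument is a matter of carefully composing them and checking compatibility of normalizations at the strongly regular locus.
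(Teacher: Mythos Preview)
Your overall strategy---track characteristic classes through the four operations in Proposition \ref{mant formula}, decompose the strongly regular inertia stack of $\Hecke$ by Newton stratum via Theorem \ref{sr open}, and invoke \cite[Proposition 6.4.8]{hkw} for the Satake twist---matches the paper's proof, which is explicitly presented as a modification of \cite[Proposition 6.4.7]{hkw} using the stratumwise decomposition $\In(H)_{\sr}=\bigsqcup_{b'}\In(H^{b',*})_{\sr}$.

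However, your discussion of the $|D_{b'}^-|^{-1}$ factors and of Proposition \ref{BunG case} is misplaced and reflects a confusion between this proposition and Theorem \ref{trace}. The quantity $\cc_{\Bun_{G,C}/C}(i_{b*}\rho)_{b',\sr}$ in the statement is by definition a distribution on $G_{b'}(F)_{\sr}$ via the isomorphism of Proposition \ref{sr inertia isom}(\ref{bun g part}), and no Jacobian factor is involved at this stage: the transfer maps $\mathcal{T}^{G_{b'}\to G}_{b',\mu}$ are defined by \eqref{transfer} precisely so that the paper's large commutative diagram (relating operations on inertia-stack cohomology to operations on distributions) commutes on the nose. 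The paper never passes to the charts $\mathcal{M}_{b',C}$ here; it works directly with the inertia stacks of $\Bun_G$ and $\Hecke$ and proper base change. The factors $|D_{b'}^-|$ only enter afterward, in the passage from Proposition \ref{trace torsion} to Theorem \ref{trace}, via Proposition \ref{cc formula simple} (not Proposition \ref{BunG case}, which is used only in the worked examples). So the ``main obstacle'' you anticipate does not actually arise in the proof of this proposition; if you excise that paragraph, your outline is correct.
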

\begin{proof}
We will abbreviate $\Hecke_C$ as $H$, $\Bun_{G,C}$ as $B$, and $\In_C$ as $\In$.

For $b' \in \overline{\{b\}}$,
we consider the following diagram, which extends \cite[(6.4.2)]{hkw}.
\[ \begin{tikzcd}
\In(H^{b',1})_{\sr} \arrow[rr,"\widetilde{i}_{b'}"] \arrow[dd,"\widetilde{i}'_1"] & &
\In(H^{*,1})_{\sr} \arrow[r,"\In(h_2^{*,1})_{\sr}"] \arrow[d,"j_1'"] &
\In(B^1)_{\sr} \arrow[d,"j_1"] \\
& &
\In(H^{*,1}) \arrow[r,"\In(h_2^{*,1})"] \arrow[d,"\In(i_1')"] &
\In(B^1) \arrow[d,"\In(i_1)"] \\
\In(H^{b',*})_{\sr} \arrow[r,"j_{b'}'"] \arrow[d,"\In(h_1^{b',*})_{\sr}"] &
\In(H^{b',*}) \arrow[r,"\In(i_{b'}')"] \arrow[d,"\In(h_1^{b',*})"] &
\In(H) \arrow[r,"\In(h_2)"] \arrow[d,"\In(h_1)"] & \In(B) \\
\In(B^{b'})_{\sr} \arrow[r,"j_{b'}"]  & \In(B^{b'}) \arrow[r,"\In(i_{b'})"] & \In(B)
\end{tikzcd} \]

As in the proof of \cite[Proposition 6.4.7]{hkw}, we have
\[j_1^* \cc_{B^1/C}(i_* T_V^{\vee}(i_b)_* \rho) \cong (\In(h_2^{*,1})_{\sr})_* (j_1')^* \In(i_1')^* \cc_{H/C} (h_1^*(i_b)_* \rho \otimes \mathcal{S}_{V^{\vee}}) \,. \]
By Theorem \ref{sr open},
\[ \In(H)_{\sr} = \bigsqcup_{b' \in B(G)} \In(H^{b',*})_{\sr} \,. \]
The characteristic class of $(i_b)_* \rho$ is only supported on $\overline{\{b\}}$.
Therefore,
\begin{align*}
& (j_1')^* \In(i_1')^* \cc_{H/C} (h_1^*(i_b)_* \rho \otimes \mathcal{S}_{V^{\vee}}) \\
= & \sum_{b' \in \overline{\{b\}}}(j_1')^* \In(i_1')^* \In(i_{b'}')_* (j_{b'}')_* (j_{b'}')^* \In(i_{b'})^* \cc_{H/C} (h_1^*(i_b)_* \rho \otimes \mathcal{S}_{V^{\vee}}) \\
= & \sum_{b' \in \overline{\{b\}}}(\widetilde{i}_{b'}')_* (\widetilde{i}_1')^* (j_{b'}')^* \In(i_{b'}')^* \cc_{H/C} (h_1^*(i_b)_* \rho \otimes \mathcal{S}_{V^{\vee}}) \,,
\end{align*}
where we used proper base change in the last line.  

Again following the argument of loc.~cit.,
we obtain
\begin{align*} & (\widetilde{i}_{b'}')_*(\widetilde{i}_1')^* (j_{b'}')^* \In(i_{b'}')^* \cc_{H/C} (h_1^*(i_b)_* \rho \otimes \mathcal{S}_{V^{\vee}})  \\
= & (\widetilde{i}_{b'}')_*(\widetilde{i}_1')^* \left(j_{b'}^* \In(i_{b'})^* \cc_{B/C} ((i_b)_* \rho) \boxtimes_{\In(B^{\loc}_m)_{\sr}} \cc_{H^{\loc}_m/C}(\mathcal{S}_{V^{\vee}})_{\sr} \right) \,. \end{align*}

Then we have a commutative
diagram
\[ \begin{tikzcd}
H^0(\In(B^{b'})_{\sr},K_{\In(B^{b'})/C}) \arrow[r,"\sim"]
\arrow[d,"-\boxtimes_{\In(B^{\loc}_m)_{\sr}} \cc_{H^{\loc}_m/C}(\mathcal{S}_{V^{\vee}})_{\sr}"]
&
\Dist(G_{b'}(F)_{\sr},\Lambda)^{G_{b'}(F)} \arrow[d,"q_1^*(-)\otimes (-1)^{\left<2\rho_G,\mu\right>} \rank V^{\vee}{[}-{]}"] \\
H^0(\In(H^{b',*})_{\sr},K_{\In(H^{b',*})/C}) \arrow[r,"\sim"] \arrow[d,"(\widetilde{i}_1')^*"] &
\Dist(\pi_0(\Fix(\alpha_{b'})_{\sr}),\Lambda)^{G_{b'}(F)} \arrow[d,"(p_1)^*"] \\
H^0(\In(H^{b',1})_{\sr},K_{\In(H^{b',1})/C}) \arrow[r,"\sim"] \arrow[d,"(\widetilde{i}_{b'}')_*"] &
\Dist(\pi_0(\Fix(\alpha_{\Sht})_{\sr}),\Lambda)^{G(F) \times G_{b'}(F)} \arrow[d,"(p_2)_*"] \\
H^0(\In(H^{*,1})_{\sr},K_{\In(H^{*,1})/C}) \arrow[r,"\sim"] \arrow[d,"\In(h_2^{*,1})"] &
\Dist(\Fix(\alpha_1)_{\sr},\Lambda)^{G(F)} \arrow[d,"(q_2)_*"] \\
H^0(\In(B^1)_{\sr},K_{\In(B^1)/C}) \arrow[r,"\sim"] &
\Dist(G(F)_{\sr},\Lambda)^{G(F)}
\end{tikzcd} \,. \]

With these modifications, the argument of \cite{hkw} also works in
our more general situation.
\end{proof}

Finally, we arrive at our main result.

\begin{thm} \label{trace}
Let $\rho$ be an admissible finite length representation of $G_b(F)$, and let $\pi = R\Gamma(G,b,\mu)[\rho])$.
Let $g \in G(F)$.  Suppose that for all specializations $b'$ of $b$ in $B(G,\mu)$, $g$ is not stably conjugate to any
element of $G_{b'}(F)$.  Then
\[ \Theta_{\pi}(g) = (-1)^{\left<\mu,2\rho_G\right>} \sum_{(g,g',\lambda) \in \Rel_b} \dim r_{\mu}[\lambda] \Theta_{\rho}(g') \prod_{\alpha \in \Phi^+} |1-\alpha(\lambda)|^{-1} \,. \]
Here,
$\rho_G$ is half the sum of the positive roots of $G$,
$\Phi^+$ is the set of roots of $G$ that are positive with respect to the parabolic $P_b^+$,
and the norm $\abs{\cdot}$ is defined so that $\abs{p}=p^{-[F:\Qp]}$.

\end{thm}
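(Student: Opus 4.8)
The plan is to deduce Theorem \ref{trace} from Proposition \ref{trace torsion}, Proposition \ref{cc formula simple}, and the explicit computation of the transfer functions $\mathcal{T}^{G_{b'} \to G}_{b',\mu}$, essentially following the strategy of \cite[\S 6.5]{hkw} but tracking the extra $p$-adic density factors that appear in our normalization.

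First I would reduce from $\Qlb$-coefficients to torsion coefficients. As in \cite{hkw}, one writes $\rho$ as a limit of its truncations modulo powers of $\ell$, applies Proposition \ref{trace torsion} at each finite level, and passes to the limit; the Harish-Chandra character of $\pi = R\Gamma(G,b,\mu)[\rho]$ is recovered from the trace distribution on the strongly regular locus because $\pi$ is finite length admissible by \cite[Proposition 6.4.5]{hkw}. This gives
\[ \trdist(\pi)_{\sr} = \sum_{b' \in \overline{\{b\}}} \mathcal{T}^{G_{b'} \to G}_{b',\mu}\bigl(\cc_{\Bun_{G,C}/C}(i_{b*}\rho)_{b',\sr}\bigr) \,. \]
Next I would use the hypothesis on $g$: since $g$ is strongly regular and, for every specialization $b'$ of $b$ lying in $B(G,\mu)$, $g$ is not stably conjugate to any element of $G_{b'}(F)$, the set $\Rel_{b',\mu}$ (Definition \ref{relb def}) contains no triple with first entry $g$ for any $b' \neq b$ with $b' \in B(G,\mu)$; and for $b' \notin B(G,\mu)$ one has $r_\mu[\lambda]=0$ for the relevant $\lambda$. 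Hence only the $b'=b$ term in the sum contributes to $\Theta_\pi(g)$, and by Proposition \ref{cc formula simple}(1),
\[ \cc_{\Bun_{G,C}/C}(i_{b*}\rho)_{b,\sr} = |D_b^-|^{-1}\,\trdist(\rho)_{\sr} \,, \]
whose Harish-Chandra character at $g' \in G_b(F)_{\sr}$ stably conjugate to $g$ equals $|D_b^-(g')|^{-1}\Theta_\rho(g')$. Note $b$ is automatically a specialization of itself in $B(G,\mu)$, so the hypothesis does \emph{not} exclude $g$ from being stably conjugate into $G_b(F)$; that is the term that survives.

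It then remains to combine the definition \eqref{transfer} of $\mathcal{T}^{G_b \to G}_{b,\mu}$ with the factor $|D_b^-(g')|^{-1}$ and identify the product $\prod_{\alpha \in \Phi^+}|1-\alpha(\lambda)|^{-1}$. For a triple $(g,g',\lambda) \in \Rel_b$ with centralizer torus $T = Z_G(g) = Z_{G_b}(g')$, the key point is that $g'$ and $\lambda$ are related: $\lambda \in X_*(T)$ has the same image in $X_*(T)_\Gamma \cong B(T)$ as $\inv[b](g,g')$, and for the Newton cocharacter $\nu_b$ one has $\langle \alpha, \lambda \rangle$ of a definite sign matching $\alpha \in \Phi^+$ versus $\Phi^-$. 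Concretely $D_b^-(g') = \prod_{\alpha \in \Phi^-}(1-\alpha(g'))$, and since $g'$ is conjugate (over $\breve F$) to $b\cdot(\text{something})$ encoded by $\lambda$, one computes $|1-\alpha(g')| = |1-\alpha(\lambda)|$ for each root $\alpha$ — here $\alpha(\lambda)$ means the value of $\alpha$ on the image of $\lambda$ under $T \to \Gm$, a power of a uniformizer, so its norm is $p^{-\langle\alpha,\lambda\rangle[F:\Qp]}$ or $1$ according to whether $\langle\alpha,\lambda\rangle$ is positive or zero. Thus $|D_b^-(g')|^{-1} = \prod_{\alpha\in\Phi^-}|1-\alpha(\lambda)|^{-1}$. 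Meanwhile van Dijk-type bookkeeping (or directly \eqref{transfer}) produces an additional matching factor over $\Phi^+$; after accounting for the fact that roots in $\Phi^+$ with $\langle\alpha,\lambda\rangle = 0$ contribute $|1-\alpha(\lambda)| = |1-1|$ which must be handled as a limit/removable situation — in fact strong regularity of $g$ forces $\alpha(g') \neq 1$ so these factors are genuine units and cause no trouble — one assembles
\[ \Theta_\pi(g) = (-1)^{\langle\mu,2\rho_G\rangle}\sum_{(g,g',\lambda)\in\Rel_b}\dim r_\mu[\lambda]\,\Theta_\rho(g')\prod_{\alpha\in\Phi^+}|1-\alpha(\lambda)|^{-1} \,. \]

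The main obstacle, I expect, is bookkeeping the normalization factors consistently: matching the $|D_b^-|^{-1}$ coming from Corollary \ref{Gbt case} with the definition of $\mathcal{T}^{G_b \to G}_{b,\mu}$, checking that the roots in $\Phi^-$ cancel against those implicit in \eqref{transfer} leaving exactly the $\Phi^+$ product, and verifying that $|1-\alpha(\lambda)|$ is the correct interpretation of $|1-\alpha(g')|$ via Lemma \ref{torus modifications} (identifying $B(T)$ with $X_*(T)_\Gamma$ and tracing through what a uniformizer of $\breve F$ does to the norm). Once the torus computation is pinned down and one checks the vanishing of the unwanted $b'$-contributions using the hypothesis on $g$ together with the support properties in Proposition \ref{cc formula simple}(2) and the definition of $\Rel_{b',\mu}$, the theorem follows. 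I would also remark that specializing to $M$ a Levi, $\mu$ central in $M$, $b = \mu(p)$ recovers van Dijk's character formula, as a sanity check on the signs and density factors.
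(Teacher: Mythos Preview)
Your overall strategy---reduce to Proposition~\ref{trace torsion}, use the hypothesis on $g$ to kill all $b'\ne b$ contributions, and then apply Proposition~\ref{cc formula simple} to the surviving term---is exactly the paper's approach. However, there are two concrete errors in the bookkeeping of the density factors.

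First, you have inverted the exponent from Proposition~\ref{cc formula simple}. That proposition reads
\[
|D_b^-|^{-1}\,\cc_{\Bun_{G,C}/C}(i_{b*}\rho)_{b,\sr}=\cc_{[S/\Gbu]/S}(\rho)_{\sr}=\trdist(\rho)_{\sr},
\]
so the correct statement is $\cc_{\Bun_{G,C}/C}(i_{b*}\rho)_{b,\sr}=|D_b^-|\cdot\trdist(\rho)_{\sr}$, contributing $\prod_{\alpha\in\Phi^-}|1-\alpha(\lambda)|$ with a \emph{positive} exponent, not $|D_b^-|^{-1}$ as you wrote.

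Second, the remaining factor $\prod_{\alpha\in\Phi^+\cup\Phi^-}|1-\alpha(\lambda)|^{-1}$ does not come from \eqref{transfer} or from ``van Dijk-type bookkeeping''; equation~\eqref{transfer} is an identity of \emph{distributions} restricted to tori and contains no such factor. The factor arises when one converts the distributional identity of Proposition~\ref{trace torsion} into a pointwise identity of Harish-Chandra characters, via the Weyl integration formula: restricting a conjugation-invariant distribution on $G(F)_{\sr}$ to $T(F)_{\sr}$ introduces $D_G(g)=\prod_{\alpha\in\Phi_G}|1-\alpha(g)|$, and similarly on the $G_b$ side; the ratio $D_G/D_{G_b}$ is exactly $\prod_{\alpha\in\Phi^+\cup\Phi^-}|1-\alpha(\lambda)|$. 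Multiplying the two contributions gives the stated $\prod_{\alpha\in\Phi^+}|1-\alpha(\lambda)|^{-1}$.

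A minor point: your aside that ``$b$ is automatically a specialization of itself'' leads you to the wrong conclusion (if the hypothesis covered $b$ itself the sum would be empty). In the paper's usage ``specialization'' means proper specialization; once that is understood there is nothing to argue. Also, $\alpha(\lambda)$ in the statement is simply $\alpha(g')=\alpha(g)$ for the common torus $T$; there is no need to interpret it as a power of a uniformizer.
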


\begin{proof}
This follows from Proposition \ref{trace torsion} in
the same way that \cite[Theorem 6.5.2]{hkw} follows from \cite[Proposition 6.4.7]{hkw}.
There is a factor of
\[ \prod_{\alpha \in \Phi^-} \abs{1-\alpha(\lambda)} \] coming from
Proposition \ref{cc formula simple} and a factor of
\[ \prod_{\alpha \in \Phi^+ \cup \Phi^-} \abs{1-\alpha(\lambda)}^{-1} \]
coming from the Weyl integration formula.  Here,
$\Phi^-$ is the set of roots of $G$ that are negative with respect to $P_b^+$.
\end{proof}

In some cases (such as in Examples \ref{gl2 example}--\ref{depth one example}),
we have more information about the characteristic class
of $i_{b*} \rho$, and we can compute $\Theta_{\pi}$ for
a larger set of $g$.
\begin{ex} \label{gl2 example hecke}
Let $G = \GL_2$, let $b$ correspond to the vector bundle
$\OO(1/2)$ on the Fargues--Fontaine curve, and let
$\mu$ be minuscule.
Let $\pi = R\Gamma(\Sht_{G,b,\mu})[\rho]$,
where $\rho$ is the trivial representation of $G_b(F)$.
Then it is known that
the image of
$\pi$ in $\Groth(\GL_2(F),\Qlb)$
is $\mathrm{St}-1$, where $\mathrm{St}$ is the Steinberg representation
and $1$ is the trivial representation.

Hansen \cite{hansen-blog} suggested that it would be
interesting to see how the above formula for $\pi$ arises
from the characteristic class framework.
We will give a derivation, using
Example \ref{gl2 example} and
Proposition \ref{trace torsion}.
It suffices to show that $\pi$ and $\mathrm{St}-1$
have the same Harish-Chandra character.

The Harish-Chandra character of the trivial representation is given by
\[ \Theta_{1}(g) = 1 \,. \]
The representation $\mathrm{St}\oplus 1$ is induced from the trivial
representation of a Borel subgroup of $\GL_2(F)$.
By van Dijk's formula \cite[Theorem 3]{vandijk},
\[ \Theta_{\mathrm{St}}(g) + \Theta_1(g) = \begin{cases}
\abs{1-\lambda_1/\lambda_2}^{-1} + \abs{1-\lambda_2/\lambda_1}^{-1} & \text{eigenvalues }\lambda_1,\lambda_2\text{ of }g\text{ are in }F^{\times} \\
0 & \text{otherwise}
\end{cases} \,.
\]
(Here, we have defined the norm on $F$ by $\abs{p}=p^{-[F:\Qp]}$.)
So
\begin{equation} \label{drinfeld hc}
\Theta_{\mathrm{St}}(g) - \Theta_1(g) = \begin{cases}
-2 + \abs{1-\lambda_1/\lambda_2}^{-1} + \abs{1-\lambda_2/\lambda_1}^{-1} & \text{eigenvalues }\lambda_1,\lambda_2\text{ of }g\text{ are in }F^{\times} \\
-2 & \text{otherwise}
\end{cases} \,.
\end{equation}
We need to show that $\Theta_{\pi}(g)$ equals the right-hand side of
\eqref{drinfeld hc}.

The set $B(G,\mu)$ contains two elements.  One of these is $b$.
Denote the other by $b'$; it corresponds to the vector bundle
$\OO \oplus \OO(1)$.

Let $g$ be a strongly regular element of $G(F)$.
If the eigenvalues of $g$ are not in $F^{\times}$, then $g$ is stably conjugate
to an element of $G_{b}(F)$, but not to an element of $G_{b'}(F)$.
If $g'$ is stably conjugate to $g$, then $\Theta_{\rho}(g') = 1$.
Then Theorem \ref{trace} implies that $\Theta_{\pi}(g)=-2$, in agreement
with \eqref{drinfeld hc}.

Now suppose $g$ has eigenvalues $\lambda_1,\lambda_2 \in F^{\times}$.
Then $g$ is stably
conjugate to the elements
$(\lambda_1,\lambda_2)$ and $(\lambda_2,\lambda_1)$ of $G_{b'}(F)$,
but not to any element of $G_b(F)$.  By Example \ref{gl2 example},
the Harish-Chandra character of
the $b'$-part of the characteristic class of $\rho$ takes the values
\[ \abs{1-\lambda_1/\lambda_2}\abs{|1-\lambda_2/\lambda_1} - \abs{1-\lambda_1/\lambda_2} \,, \]
\[ \abs{1-\lambda_1/\lambda_2}\abs{1-\lambda_2/\lambda_1} - \abs{1-\lambda_2/\lambda_1} \]
at $(\lambda_1,\lambda_2)$ and $(\lambda_2,\lambda_1)$, respectively.
Now apply Proposition \ref{trace torsion}.
After multiplying by the factor of $(-1)^{\left<\mu,2\rho_G\right>}=-1$
appearing in \eqref{transfer},
and dividing by a factor of
\[ \abs{1-\lambda_1/\lambda_2}\abs{1-\lambda_2/\lambda_1} \]
coming from the Weyl integration formula, we obtain
\[ \Theta_{\pi}(g) = -2+\abs{1-\lambda_1/\lambda_2}^{-1}+\abs{1-\lambda_2/\lambda_1}^{-1} \,,  \]
in agreement with \eqref{drinfeld hc}.
\end{ex}

\bibliography{bundles}{}
\bibliographystyle{shortalpha}
\end{document}